\numberwithin{equation}{section}
\titleformat*{\paragraph}{\itshape\mdseries}
\newtheorem{Theorem}{Theorem}[section]	
\newtheorem*{Theorem*}{Theorem}
\newtheorem{proposition}[Theorem]{Proposition} 
\newtheorem{lemma}[Theorem]{Lemma}
\newtheorem{Claim}[Theorem]{Claim}	
\newtheorem{defn}[Theorem]{Definition}
\newtheorem*{defn*}{Definition}
\newtheorem{problem}[Theorem]{Problem}	
\newtheorem{question}[Theorem]{Question}	
\newtheorem{Cor}[Theorem]{Corollary}
\newtheorem{conjecture}[Theorem]{Conjecture}
\newcommand{\BE}{{\mathbb{E}}}
\newcommand{\BN}{{\mathbb{N}}}
\newcommand{\BP}{{\mathbb{P}}}
\newcommand{\BR}{{\mathbb{R}}}
\newcommand{\BZ}{{\mathbb{Z}}}
\newcommand{\Fe}{{\mathfrak{e}}}
\newcommand{\CG}{{\mathcal{G}}}
\newcommand{\CW}{{\mathcal{W}}}
\newcommand{\CY}{{\mathcal{Y}}}
\newcommand{\ind}{{\mathbbm{1}}}
\newcommand{\one}{{\textbf{1}}}
\newcommand{\om}{{\omega}}
\newcommand{\si}{{\sigma}}
\title{Simplicial spanning trees in random Steiner complexes}
\author{Ron Rosenthal\footnote{Partially supported by ISF grant 771/17 and BSF grant 2018330} ~~and~\, Lior Tenenbaum\footnote{\vspace{0.1cm}Partially supported by ISF grant 771/17}}
\begin{document}
	
\maketitle

\begin{abstract}
  A spanning tree $T$ in a graph $G$ is a sub-graph of $G$ with the same vertex set as $G$ which is a tree. In 1981, McKay proved an asymptotic result regarding the number of spanning trees in random $k$-regular graphs. In this paper we prove a high-dimensional generalization of McKay's result for random $d$-dimensional, $k$-regular simplicial complexes on $n$ vertices, showing that the weighted number of simplicial spanning trees is of order $(\xi_{d,k}+o(1))^{\binom{n}{d}}$ as $n\to\infty$, where $\xi_{d,k}$ is an explicit constant, provided $k> 4d^2+d+2$. A key ingredient in our proof is the local convergence of such random complexes to the $d$-dimensional, $k$-regular arboreal complex, which allows us to generalize McKay's result regarding the Kesten-McKay distribution. 
\end{abstract}

 
 \section{Introduction}
	Let $G=(V,E)$ be a graph with vertex set $V$ and edge set $E$ and for a vertex $v\in V$, denote by $\deg(v)$ its degree. $G$ is called a $k$-regular graph, if $\deg(v)=k$ for all $v\in V$. A subgraph $T=(V',E')$ of $G$ is called a \emph{spanning tree} of $G$ if $T$ is an acyclic, connected graph such that $V'=V$. For a graph $G$, denote by $\kappa_1(G)$ the number of spanning trees in it.

	A classical model for random $k$-regular graphs, called the random matching model $\CG_{n,k}$, is defined for $k\geq 1$ and $n\in\BN$ even as the graph with vertex set $[n]:=\{1,2,\ldots,n\}$ and edge set which is the union of $k$ independent and uniformly distributed perfect matching on the set $[n]$. In \cite{McK81b}, McKay proved the following asymptotic result regarding the number of spanning trees in random $k$-regular graphs.

\begin{Theorem}[\cite{McK81b}]\label{Mckay1}	
	Fix $k\geq 3$. Let $(n_i)_{i=1}^\infty\subset\BN$ be a strictly increasing sequence of even numbers and for $i\geq 1$, let $G_i$ be a random graph sampled according to $\CG_{n_i,k}$. Then 
	\[ 
		\sqrt[n_i]{\kappa_1(G_i)} \underset{^{i\rightarrow 	\infty}}{\longrightarrow}  \xi_{1,k}
	\]
in probability, where
	\begin{equation}
	    \xi_{1,k}:= \frac{(k-1)^{k-1}}{(k^2-2k)^{\frac{k-2}{2}}}\,.
	\end{equation}
\end{Theorem}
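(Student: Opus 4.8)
The plan is to pass to the spectral side. Fix $k\ge 3$, write $n=n_i$, $G=G_i$, let $A=A(G)$ be the adjacency matrix with eigenvalues $k=\lam_1(G)>\lam_2(G)\ge\cdots\ge\lam_n(G)\ge -k$, and recall that the Laplacian of a $k$-regular graph is $L=kI-A$, so its eigenvalues are $0=k-\lam_1(G)\le\cdots\le k-\lam_n(G)$. When $G$ is connected the Matrix--Tree theorem gives $\kappa_1(G)=\frac1n\prod_{j=2}^{n}\bigl(k-\lam_j(G)\bigr)$, hence
\[
  \frac{1}{n}\log\kappa_1(G)=\frac{1}{n}\sum_{j=2}^{n}\log\bigl(k-\lam_j(G)\bigr)-\frac{\log n}{n}.
\]
So it suffices to show $\frac1n\sum_{j=2}^{n}\log(k-\lam_j(G))\to\log\xi_{1,k}$ in probability, together with the deterministic identity $\int_{-2\sqrt{k-1}}^{2\sqrt{k-1}}\log(k-x)\,\rho_k(x)\,dx=\log\xi_{1,k}$, where $\rho_k(x)=\frac{k\sqrt{4(k-1)-x^2}}{2\pi(k^2-x^2)}$ is the Kesten--McKay density. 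The identity is a direct computation via the substitution $x=2\sqrt{k-1}\cos\theta$ (it also equals the tree entropy of the $k$-regular tree $\BT_k$ in the sense of Lyons). The decisive structural point is that for $k\ge 3$ one has $2\sqrt{k-1}<k$, so $x\mapsto\log(k-x)$ is bounded and continuous on $[-2\sqrt{k-1},2\sqrt{k-1}]=\operatorname{supp}\rho_k$; all the difficulty comes from eigenvalues $\lam_j(G)$ that might drift toward $k$.

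Two facts about $\CG_{n,k}$ are needed. First, the empirical spectral distribution $\mu_G:=\frac1n\sum_{j=1}^n\delta_{\lam_j(G)}$ converges weakly, in probability, to $\rho_k$. Conceptually this holds because $\CG_{n,k}$ converges to $\BT_k$ in the Benjamini--Schramm (local weak) sense in probability: a bounded-radius neighbourhood of a uniform vertex is a $k$-regular tree with probability tending to $1$, so $\frac1n\BE[\operatorname{tr}(A^\ell)]$, the expected number of closed length-$\ell$ walks, converges to the number of such walks based at the root of $\BT_k$, i.e.\ to the $\ell$th moment of $\rho_k$; a two-point (switching) estimate upgrades this to convergence in probability, and since $\rho_k$ is compactly supported, hence moment-determinate, one gets $\mu_G\to\rho_k$ weakly in probability. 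Second, $\CG_{n,k}$ has a uniform spectral gap: there is $\gamma=\gamma(k)>0$ with $\BP(\lam_2(G)\le k-\gamma)\to1$. An elementary route is a first-moment/union-bound estimate showing that whp every $S\subseteq[n]$ with $|S|\le n/2$ has edge boundary $|\partial S|\ge h|S|$ for a fixed $h=h(k)>0$, whence Cheeger's inequality gives $k-\lam_2(G)\ge h^2/(2k)$; in particular $G$ is connected whp. (Friedman's theorem gives the optimal gap but is not required.) We may shrink $\gamma$ so that $\gamma<(\sqrt{k-1}-1)^2=k-2\sqrt{k-1}$, i.e.\ $\operatorname{supp}\rho_k\subseteq[-k,k-\gamma]$.

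\emph{Upper bound.} For $M>0$ let $g_M(x):=\max\{\log(k-x),-M\}$, bounded and continuous on $[-k,k]$. Using $g_M(\lam_1(G))=-M$,
\[
  \frac{1}{n}\sum_{j=2}^{n}\log\bigl(k-\lam_j(G)\bigr)\;\le\;\frac{1}{n}\sum_{j=2}^{n}g_M\bigl(\lam_j(G)\bigr)\;=\;\int g_M\,d\mu_G+\frac{M}{n}\;\longrightarrow\;\int g_M\,d\rho_k
\]
in probability as $i\to\infty$. Since $\log(k-x)$ is bounded on $\operatorname{supp}\rho_k$, for $M$ large $g_M$ agrees with $\log(k-\cdot)$ there and $\int g_M\,d\rho_k=\log\xi_{1,k}$; hence $\BP\bigl(\frac1{n_i}\log\kappa_1(G_i)>\log\xi_{1,k}+\ep\bigr)\to0$ for every $\ep>0$. \emph{Lower bound.} On the event $\{\lam_2(G)\le k-\gamma\}$ — of probability $\to1$, and forcing $G$ connected — all of $\lam_2(G),\dots,\lam_n(G)$ lie in $[-k,k-\gamma]$. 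Fix a bounded continuous $\psi\colon[-k,k]\to\BR$ with $\psi(x)=\log(k-x)$ on $[-k,k-\gamma]$. Then on this event $\frac1n\sum_{j=2}^{n}\log(k-\lam_j(G))=\int\psi\,d\mu_G-\frac{\psi(k)}{n}$, which converges in probability to $\int\psi\,d\rho_k=\int\log(k-x)\,\rho_k(x)\,dx=\log\xi_{1,k}$, the first equality because $\operatorname{supp}\rho_k\subseteq[-k,k-\gamma]$. Combining the two one-sided statements gives $\frac1{n_i}\log\kappa_1(G_i)\to\log\xi_{1,k}$, i.e.\ $\kappa_1(G_i)^{1/n_i}\to\xi_{1,k}$, in probability.

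\emph{Main obstacle.} The substantive step is the uniform spectral gap: weak convergence of $\mu_G$ allows a sublinear number of eigenvalues to approach $k$, and since $\log(k-x)\to-\infty$ there, without an a priori lower bound on $k-\lam_2(G)$ such eigenvalues could in principle drag $\frac1n\log\kappa_1(G)$ below $\log\xi_{1,k}$; excluding this (via the first-moment expansion and Cheeger bound above, or via Friedman's theorem) is the real content. The remaining pieces — the Matrix--Tree reduction, the moment identification of $\rho_k$, the truncation in the upper bound, and the evaluation of the integral — are routine.
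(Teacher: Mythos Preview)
Your argument is correct and follows the same skeleton as the paper's proof of Theorem~\ref{thm:main_Thm1} (of which the present statement is the $d=1$ case, quoted from \cite{McK81b} rather than re-proved here): Matrix--Tree reduction to $\frac1n\sum_{j\ge 2}\log(k-\lambda_j)$, weak convergence of the empirical spectral measure to the Kesten--McKay law via local convergence to the regular tree, a spectral gap confining the nontrivial eigenvalues to a compact set on which $\log(k-\cdot)$ is bounded continuous, and evaluation of the limiting integral.

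Two differences are worth flagging. For the gap you use an elementary first-moment expansion bound plus Cheeger, which already covers all $k\ge 3$; the paper instead routes through Friedman's theorem (Theorem~\ref{spec-gap-prob}) together with a degree-correction step (Corollary~\ref{cor:Steiner_deg}), and in its general-$d$ form this recovers the $d=1$ conclusion only for $k>7$. For the integral $\int\log(k-x)\rho_k(x)\,dx=\log\xi_{1,k}$ the paper gives a full derivation via Chebyshev expansions (Section~\ref{Cheby} and Proposition~\ref{prop:num_of_SST_2}), which is exactly your substitution $x=2\sqrt{k-1}\cos\theta$ made systematic. One small redundancy in your write-up: the ``lower bound'' paragraph already gives two-sided convergence on the gap event (since there $\frac1n\sum_{j\ge 2}\log(k-\lambda_j)=\int\psi\,d\mu_G-\psi(k)/n$ exactly), so the separate truncation upper bound is unnecessary.
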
 

In this paper we generalize Theorem \ref{Mckay1} to the context of simplicial complexes. We concentrate on the model of random $(d,k,n)$-uniform Steiner complexes, which for fixed $d,k\in\BN$ and $n$ a $d$-admissible number (see Definition \ref{defn:admissible}), is defined as the union of $k$ i.i.d. $(n,d)$-Steiner systems chosen uniformly at random from all $(n,d)$-Steiner systems, see Section \ref{sec:Results} for further details and additional models. Under the assumption $k> 4d^2+d+2$, we prove that the weighted number of $d$-dimensional spanning trees $\kappa_d(X_i)$ contained in a random $(d,k,n_i)$-Steiner complex $X_i$ satisfies
\begin{equation}\label{intro}
	\sqrt[\binom{n}{d}]{\kappa_d(X_i)}\underset{^{i\rightarrow 	\infty}}{\longrightarrow}  \xi_{d,k}
\end{equation}
in probability, whenever $(n_i)$ is a sequence of $d$-admissible numbers such that $\lim_{i\to\infty}n_i=\infty$, where $\xi_{d,k}$ is an explicit constant and the weights are determined according to the $(d-1)$-homology over $\BZ$ of the $d$-dimensional spanning trees (see Theorem \ref{matrixtree-thm} for further details). 

In order to establish the generalization of Theorem \ref{Mckay1}, we prove two results regarding the structure of uniform random Steiner complexes which are of independent interest. Both results show that in a certain sense, the random complexes $(X_i)$ are close to the $(d,k)$-arboreal complex $T_{d,k}$ introduced in \cite{PR12} as a high-dimensional counterpart of the $k$-regular tree. 

The first result shows that the local structure of $X_i$ converges to that of the arboreal complex $T_{d,k}$, i.e., that for every $r\geq 0$, the $r$-neighborhood of a fixed $(d-1)$-face in $X_i$ is isomorphic to the $r$-neighborhood of any of the $(d-1)$-faces in $T_{d,k}$, with probability tending to $1$ as $i$ tends to infinity. 

Using the local convergence result together with spectral information on the Laplacian of $T_{d,k}$ from \cite{Ro14}, we prove our second result, showing that the eigenvalues of the Laplacian of $X_i$ converge to the spectrum of $T_{d,k}$ in the sense of weak convergence of probability measures (see Section \ref{sec:Results} for further details). This result generalizes a classical result by Kesten and McKay \cite{Ke59,McK83} regarding the limiting spectrum of $k$-regular graphs. 

In order to conclude the proof of \eqref{intro}, we use a high-dimensional variant of the matrix tree theorem, see \cite{Ka83,DKM09}, which relates the weighted number of $d$-dimensional spanning trees of $X_i$ to the eigenvalues of the Laplacian of $X_i$. Then, using our result on the limiting behaviour of the eigenvalues of $X_i$, we are able to conclude \eqref{intro}.

The results in this paper are part of the second author master degree. In particular, the master thesis \cite{Ten20} contains additional details and further discussion. 

\paragraph{Acknowledgements.} The authors are grateful to Alex Lubotzky for fruitful discussions that led to this work. We would also like to thank Antti Knowles, Alan Lew, Zur Luria and Roy Meshulam for their insightful comments. 

  
\section{Results}\label{sec:Results}

\subsection{Preliminaries} \label{subsec:Preliminaries}
	Let $V$ be a non-empty set.  A \emph{simplicial complex} $X$ on a vertex set $V$, is a collection of finite subsets of $V$ that is closed under inclusion, namely, if $\tau \in X$, then $\sigma \in X$ for all $\sigma \subseteq \tau$. The elements of a simplicial complex are called \emph{faces} or \emph{cells}. For a face $\sigma \in X$, define its \emph{dimension} by $\dim(\sigma):=|\sigma|-1$. The dimension of the simplicial complex $X$ is defined as $\dim(X):= {\sup}_{\sigma \in X} \dim(\sigma)$. A simplicial complex of dimension $d$ is abbreviated $d$-complex and a face of dimension $\ell$ is called an $\ell$-face. For a simplicial complex $X$ and $\ell\geq -1$, the collection of $\ell$-faces of $X$ is denoted by $X^\ell= \{\sigma \in X : \dim(\sigma)=\ell\}$, and the number of $\ell$-faces by $f_\ell:=|X^\ell|$. For $\ell\geq 0$,  the \emph{$\ell$-dimensional skeleton} of $X$ is defined to be $ X^{(\ell)}:= \cup_{j=-1}^{\ell} X^j$. We say that $X$ has a complete $\ell$-skeleton if $X$ contains all subsets of $X^0$ of dimension at most $\ell$, i.e. $X^{(j)}=\binom{X^0}{j+1}$ for all $j\leq \ell$, where for non-empty set $A$ and $j\geq 0$, we denote by $\binom{A}{j}$ all subsets of $A$ of size $j$. The complete complex of dimension $d$ on $n$ vertices is denoted by $K_n^{(d)}$. The degree of an $\ell$-face $\sigma$ in a $d$-complex $X$ is defined by $\deg(\sigma)\equiv \deg_X(\sigma):= |\{ \tau \in X^d : \sigma \subset \tau  \}| $. If all the $(d-1)$-faces of a $d$-complex have degree $k$, we say that the simplicial complex is \emph{$k$-regular}. A $d$-complex is called \emph{pure}, if every face in it is contained in at least one $d$-face. If the complex has a complete $(d-1)$-skeleton, this is equivalent to saying that $\deg(\sigma)\geq 1$ for all $\sigma \in X^{d-1}$. Throughout this paper, with the exception of the arboreal complexes (see Section \ref{sec:Results})), we study simplicial complexes, with a finite vertex set which are pure and have a complete $(d-1)$-dimensional skeleton. For future use, for $\sigma\in X$ and $v\in X^0\setminus \sigma$, we introduce the abbreviation $v\sigma:=\{v\}\cup \sigma$. 
	
	Given two complexes $X$ and $Y$, a map $f:X^0\rightarrow Y^0$ is called a \emph{simplicial map}, if $f[\sigma]$ is a face in $Y$ for any $\sigma\in X$, where $f[\sigma]=\{f(v) ~:~ v\in \sigma\}$. In this case the map $f$ induces a map $\hat{f}:X\rightarrow Y$, which is a mapping of sets. If $\hat{f}$ is also a bijection, then $f$ is called a \emph{simplicial isomorphism}.

\paragraph{Oriented faces and upper-Laplacian.} For $\ell\geq 1$, every $\ell$-face $\sigma=\{ \sigma^{0},\ldots,\sigma^{\ell}\} \in X^\ell$ has two possible orientations, corresponding to the possible orderings of its vertices, up to an even permutation. We denote an oriented face by square brackets, and a flip of orientation by an overline. For example, one orientation of $\sigma=\{x,y,z\} $ is $[x,y,z]=[y,z,x]=[z,x,y]$. The other orientation of $\sigma$ is $\overline{[x,y,z]}=[y,x,z]=[x,z,y]=[z,y,x]$. We denote by $X_{\pm}^{\ell}$ the set of oriented $\ell$-faces (so that $|X_{\pm}^{\ell}|=2|X^{\ell}|$ for $\ell\geq1$). We also define $X^0_\pm=X^0$. 

For $\ell \geq 0$, the space of \emph{$\ell$-forms on $X$}, denoted by $\Omega^{\ell}(X)$, is the vector space of skew-symmetric functions on oriented $\ell$-faces over $\BR$
\[
	\Omega^{\ell}(X):= \big\{ f:X_{\pm}^{\ell}\rightarrow\mathbb{R}\,:\,f(\overline{\sigma})=-f(\sigma)\;\forall\sigma\in X_{\pm}^{\ell}\big\}\,.
\]

We endow $\Omega^{\ell}(X)$ with the inner product
\[
	\langle f,g \rangle =\sum_{\sigma\in X^{\ell}}f(\sigma)g(\sigma)\,.
\]
Note that $f(\sigma)g(\sigma)$ is well-defined since its value is independent of the choice of orientation of the $\ell$-face $\sigma$.  If we denote by $X^{\ell}_+$ a set of oriented $\ell$-faces,  containing exactly one orientation for each of the $\ell$-face, then $(\one_\sigma)_{\sigma\in X^\ell_+}$ is an orthonormal basis for $\Omega^\ell(X)$, where for $\sigma\in X^\ell_\pm$, we define
\[
	\one_{\sigma}(\sigma')=\begin{cases}
		1 & \sigma'=\sigma\\
		-1 & \sigma'=\overline{\sigma}\\
		0 & \text{otherwise}
	\end{cases}\,.
\]

The boundary $\partial \sigma$ of the $(\ell+1)$-face $\sigma=\{\sigma^{0},\ldots,\sigma^{\ell+1}\} \in X^{\ell+1}$ is defined as the set of $\ell$-faces $\{\sigma^{0},\ldots,\sigma^{i-1},\sigma^{i+1},\ldots,\sigma^{\ell}\}$ for $0\leq i\leq \ell+1$. An oriented $(\ell+1)$-face $[\sigma^{0},\ldots,\sigma^{\ell+1}]\in X_{\pm}^{\ell+1}$ induces orientations on the $\ell$-faces in its boundary, as follows: the face $\{ \sigma^{0},\ldots,\sigma^{i-1},\sigma^{i+1},\ldots,\sigma^{\ell+1}\}$ is oriented as $(-1)^{i}[\sigma^{0},\ldots,\sigma^{i-1},\sigma^{i+1},\ldots,\sigma^{\ell+1}]$, where we use the notation  $(-1)\tau:=\overline{\tau}$.

The following neighboring relation for oriented faces was introduced in \cite{PR12}: for $\sigma,\sigma'\in X_{\pm}^{d-1}$, define $\sigma$ and $\sigma'$ to be neighbors, denoted $\sigma'\sim\sigma$ (or $\sigma\overset{_X}{\sim}\sigma'$) if there exists an oriented $d$-face $\tau\in X_{\pm}^{d}$ such that both $\sigma$ and $\overline{\sigma'}$ are in the boundary of $\tau$ as oriented faces (see Figure \ref{fig:An-oriented-edge} for an illustration in the case $d=2$).

\begin{figure}[h]
\centering{}\includegraphics{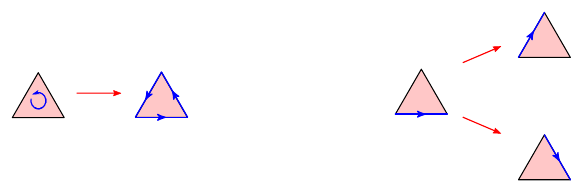}\caption{Left: an oriented 2-face and the orientation it induces on its boundary. Right: an oriented 1-face in a 2-face together with its two oriented neighboring 1-faces.
\label{fig:An-oriented-edge}}
\end{figure}

The adjacency operator $A=A_{X}$ of a complex $X$ is a linear operator $A_X:\Omega^{d-1}(X)\to\Omega^{d-1}(X)$ defined by 
\begin{equation}\label{eq:Adj_operator}
	Af(\sigma):=\sum_{\sigma\overset{X}{\sim}\sigma'}f(\sigma'),\qquad\forall f\in\Omega^{d-1}(X),\,\sigma\in X_{\pm}^{d-1}\,.
\end{equation}
Similarly, the upper Laplacian $\Delta_{d-1}^+=\Delta_{d-1}^+(X):\Omega^{d-1}(X)\to \Omega^{d-1}(X)$ is defined by 
\begin{equation}\label{eq:Adj_operator}
	\Delta_{d-1}^+f(\sigma):=\deg(\sigma)f(\sigma)-\sum_{\sigma'\sim\sigma}f(\sigma'),\qquad\forall f\in\Omega^{d-1}(X),\,\sigma\in X_{\pm}^{d-1}\,,
\end{equation}
where the degree of an oriented face is defined to be the degree of the corresponding unoriented face. As in the graph case $d=1$, The upper Laplacian is a self-adjoint with non-negative eigenvalues. Furthermore, $0$ is always one of its eigenvalues, since each function $g\in \Omega^{d-2}(X)$ defines a $0$-eigenfunction $dg\in \Omega^{d-1}(X)$ by $dg(\sigma)=\sum_{\rho\in\partial \sigma}g(\rho)$. We split the eigenvalues and eigenvectors of $\Delta_{d-1}^+(X)$ into two parts, the trivial $0$ eigenvalues which are the $0$-eigenvalues with eigenvectors of the form $dg$ for some $g\in\Omega^{d-2}(X)$ and the remaining eigenvalues which are called non-trivial. 

The definitions of the adjacency and upper Laplacian that are given here are rather direct. An equivalent, and more conceptual way to define the upper Laplacian, originating in the work of Eckmann \cite{Eck44}, is via the boundary and couboundary opertaros which are closely related to the definition of homology and cohomology over $\BR$. For additional information on the connection between $A,\Delta_{d-1}^{+}$ and the homology and cohomology of the complex c.f. \cite{Ha02,GW14}. 

\paragraph{Simplicial spanning trees.} The next notion we wish to recall is the generalization of a spanning tree for simplicial complexes as introduced in the work of Kalai \cite{Ka83} and of Duval, Klivans and Martin \cite{DKM09}

\begin{defn}[Simplicial spanning trees]
  	Let $X$ be a finite simplicial complex and let $\ell \leq \dim(X)$. An $\ell$-dimensional sub-simplicial complex $T\subseteq X$ is called an \emph{$\ell$-dimensional simplicial spanning tree} of $X$, abbreviated $\ell$-SST, if 
 	\begin{itemize}
  		\item $X^{(\ell-1)}=T^{(\ell-1)}$,
  		\item $\widetilde{H}_\ell(T;\mathbb{Z})=0$,
  		\item $\vert \widetilde{H}_{\ell-1}(T;\mathbb{Z})\vert <\infty$,
  		\item $f_\ell(T)=f_\ell(X)-\widetilde{\beta}_\ell+\widetilde{\beta}_{\ell-1}$,
 	\end{itemize}
where $\widetilde{H}_\ell(T;\mathbb{Z})$ is the $\ell$-th reduced homology group of $T$ with coefficients in $\mathbb{Z}$ and $\widetilde\beta_\ell$ is the $\ell$-th reduced Betti number of $X$, i.e., $\widetilde\beta_\ell=\mathrm{rank}(\widetilde{H}_\ell(X;\mathbb{Z}))$, see \cite{Ha02} for additional information on  homology and \cite{Ka83,DKM09} for more information on SSTs and the reasoning behind the definition.

When $\ell=\dim(X)$, an $\ell$-SST is simply called an SST. Note that we will only use the above definition in the case where the $d$-dimensional complex $X$ has a full $(d-1)$-skeleton. In this case, the the co-dimension 1 skeleton is complete and so is the $(d-1)$-skeleton of each of its SSTs.

The collection of $\ell$-SSTs of $X$ is denoted by $\mathcal{T}_\ell(X)$ and the \emph{weighted number of $\ell$-SSTs} of $X$ is defined by
   \[ 
   		\kappa_\ell(X):= \sum\limits_{T\in \mathcal{T}_\ell(X)}\big\vert \widetilde{H}_{\ell-1}(T;\mathbb{Z}) \big\vert^2. 
   	\]
\end{defn}

\paragraph{Uniform random Steiner complexes.} Next, let us discuss the generalization of the matching model into high-dimensional simplicial complexes. 
  
Let $d\in\BN$ and $n\geq d+1$. An \emph{$(n,d)$-Steiner system} is a collection $S\subset 2^{[n]}$ of subsets of size $d+1$ such that each subset of $[n]$ of size $d$ is contained in exactly $1$ element of $S$. In the language of simplicial complexes, an $(n,d)$-Steiner system can be thought of as the collection of $d$-faces in a $1$-regular $d$-complex with $n$ vertices and complete $(d-1)$-skeleton. In particular an $(n,1)$-Steiner system is a perfect matching. 

Noting that for every $0\leq j\leq d-1$, the number of $(d-1)$-faces containing a fixed $j$-face $\sigma$ in the complete $(d-1)$-complex on $n$ faces is $\binom{n-j-1}{d-j-1}$, and that each $d$-face containing $\sigma$ covers exactly $d-j$ of those $(d-1)$-faces, it follows that if $S$ is an $(n,d)$-Steiner system, then $d-j$ must divide $\binom{n-j-1}{d-j-1}$ for every $0\leq j\leq d-1$. This naturally leads to the following definition.

\begin{defn}[$d$-admissible numbers]\label{defn:admissible}
	For a fixed $d\in\BN$, we say that $n\geq d+1$ is $d$-\textit{admissible} if $d-j$ divides $\binom{n-j-1}{d-j-1}$ for every $0\leq j\leq d-1$. 
\end{defn}	
	
Note that for any fixed $d\in\BN$, there are infinitely many $d$-admissible natural numbers.
 
Although being $d$-admissible is a ncessary condition on $n$ for the existence of an $(n,d)$-Steiner system, the fact that for fixed $d\in\BN$, there are infinitely many $d$-admissible numbers for which Steiner systemes exist is a highly non-trivial fact. This and much more has been proved using a random construction by Peter Keevash \cite{Kee14,Kee18}, who showed that $(n,d)$-Steiner systems exist for any large enough $d$-admissible number $n$. 

The notion of Steiner systems leads us to the following natural generalization of the random matching model.  

\begin{defn}[Uniform random Steiner complexes]\label{defn:random_Steiner_complexes}
	Let $d,k\in\BN$ and $n\in\BN$ a $d$-admissible number. We say that $X$ is a $(d,k,n)$-uniform random Steiner complex if $X=K_{n-1}^{(d-1)}\cup \bigcup_{j=1}^k S_j$, where $(S_j)_{j=1}^k$ are i.i.d. $(n,d)$-Steiner systems sampled uniformly at random from the set of all $(n,d)$-Steiner systems on the vertex set $[n]$. 
\end{defn}

The resulting random complex $X$, is of dimension $d$. Furthermore, the degrees of all $(d-1)$-faces is bounded $k$ and the complex is $k$-regular if and only if the sets $(S_j)_{j=1}^k$ are disjoint (see Section \ref{future} for further discussion). This construction also has the property that it induces the matching model on graphs for links of $(d-2)$-faces and in particular that in dimension $d=1$ it recovers the matching model. Finally, note that the distribution of $X$ is invariant under permutations on the vertex set. 

In \cite{LLR19} a slightly different model, named random Steiner complex, was introduced and studied. There, for $d,k\in\BN$ and $n\in\BN$ which is $d$-admissible, the $(d,k,n)$-random Steiner complex is defined as the union of $k$ independent $(n,d)$-Steiner systems each sampled according to Keevash's construction. That is, if $S_1,\ldots,S_k$ are $(n,d)$-Steiner systems, each of which is sampled independently according to Keevash construction, a $(d,k,n)$-random Steiner complex $X$ is then defined as  $X:=K_n^{(d-1)}\sqcup \bigcup_{j=1}^k S_j$.

In this article, we do not go into the details of Keevash's construction, but take a few useful statements about the algorithm used in its definition. First, with high probability, namely with probability tending to $1$ as the number of vertices tends to infinity, the algorithm produces an $(n,d)$-Steiner system and in particular does not abort. Second, the distribution of the resulting subset of $\binom{[n]}{d+1}$ is invariant under permutations on the vertex set. Finally, it is worth noting that the distribution on Steiner systems obtained from Keevash construction is not the uniform one. 

We note that at the moment there is no algorithm for sampling an $(n,d)$-Steiner system uniformly at random, however Keevash's algorithm provides an algorithm for sampling such systems in a non-uniform way, and it is relatively easy to construct systems which are close to being $(n,d)$Steiner systems, in the sense that all $(d-1)$-cells except for $o(n^d)$ are contained in a unique $d$-cell. See further discussion in Section \ref{future}.

As it turns out the result stated below for uniform random Steiner complexes union are also valid for the original model of random Steiner complexes studied in \cite{LLR19}. Furthermore, all of our results are valid for any distribution on subsets of $A\subset\binom{[n]}{d+1}$ such that 
\begin{enumerate}
	\item Each subset of size $d$ in $[n]$ is contained in at most $1$ element in $A$. 
	\item The probability that $A$ is an $(n,d)$-Steiner system converges to $1$ as $n$ tends to infinity. 
	\item The distribution of $A$ is invariant under permutations of the vertex set. 
\end{enumerate}
Indeed, an inspection of the proofs shows that those are the only properties of the distribution that are used in the proofs of the Theorems.

\paragraph{Arboreal complexes.}

Consider the following construction for an infinite $d$-dimensional complex. Start with a $d$-face $\tau$, and attach to each of its $(d-1)$-faces new $d$-faces, using a new vertex for each of the new $d$-faces. Continue by induction, adding new $d$-faces to each of the $(d-1)$-faces which were added in the last step, using a new vertex for each of them. A complex obtained in such a way is called an arboreal complex. Similar to what happens in the graph case, i.e. $d=1$, for every natural numbers $k$ and $d$, the process in which we add exactly $(k-1)$-new faces to each of the $(d-1)$-faces in each of the steps defines a unique $k$-regular $d$-dimensional arboreal complex, denoted $T_{d,k}$. See Figure \ref{fig:The-orientation-T_2_2}, for an illustration of the first stages in the construction of $T_{2,2}$.
   	
\begin{figure}[h]
	\centering
	\includegraphics[width=13cm]{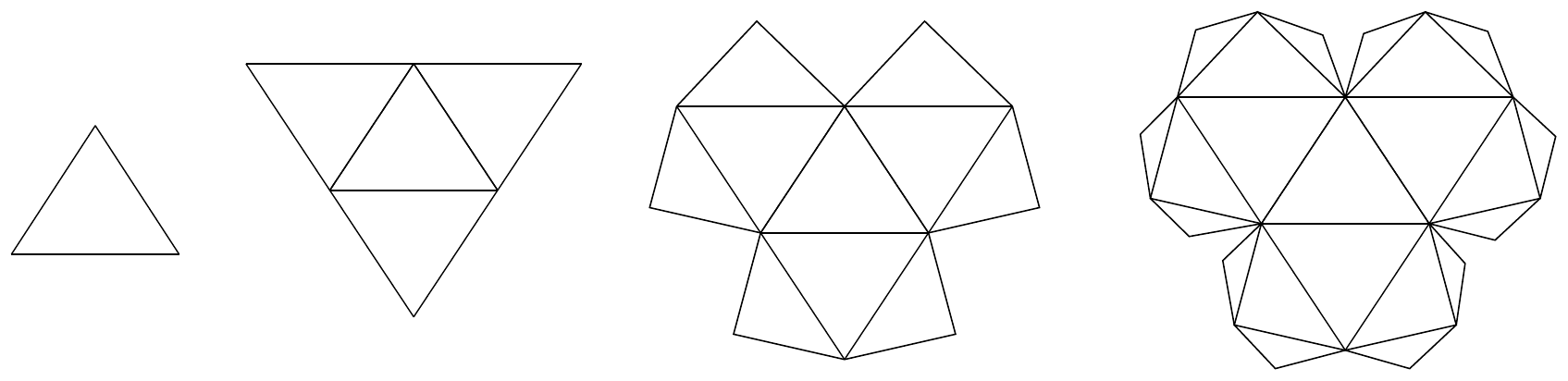}
	\caption{\label{fig:The-orientation-T_2_2} The construction of the zeroth, first, second and third layers of $X=T_{2,2}$.}
\end{figure}

\paragraph{Empirical spectral distribution.}

Let $\CW$ be a $r$-dimensional vector space over $\BR$ and let $A:\CW\rightarrow \CW$ be self-adjoint linear operator on $\CW$ with eigenvalues $\{\lambda_i(A)\}_{i=1}^r$ including multiplicities. The \emph{empirical spectral distribution} of $A$ is the Borel probability measure on $\mathbb{R}$, defined by
 	\[ 
 		\mu_A:= \frac{1}{r} \sum\limits_{i=1}^r \delta_{\lambda_i(A)}\,, 
 	\]
where $\delta_x$ is the Dirac probability measure in $x$.

Throughout the paper, we only discuss the empirical spectral distributions of the adjacency operator $A$ and the upper Laplacian $\Delta_{d-1}^+$ associated with a uniform random Steiner complex $X$, denoted by $\mu_{A_X}$ and $\mu_{\Delta_{d-1}^+(X)}$ respectively. 

	  
\subsection{Results}
  
We now state our main results.  

\begin{Theorem}\label{thm:main_Thm1}	
	Let $(X_i)_{i=1}^\infty$ be a sequence of $(d,k,n_i)$-uniform random Steiner complexes with $(n_i)$ a sequence of $d$-admissible numbers such that $\lim_{i\to\infty}n_i=\infty$, and assume that $k> k(d):= 4d^2+d+2$. Then
\[ 
	\sqrt[ \binom{n_i}{d} ]{ \kappa_d(X_i})\underset{^{i\rightarrow\infty}}{\longrightarrow} \xi_{d,k}
\]
in probability, where
\[
	\xi_{d,k}:= \dfrac{(k-1)^{k-1}}{\left( k-1-d \right)^{ \frac{k}{d+1}-1 } k^{\frac{d(k-1)-1}{d+1}} }\,.
\]
\end{Theorem}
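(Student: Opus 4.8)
\emph{Strategy.} The plan is to pass, via the high-dimensional matrix-tree theorem, from the weighted tree count $\kappa_d(X_i)$ to a linear spectral statistic of the upper Laplacian $\Delta_{d-1}^+(X_i)$ --- its ``log-determinant'' --- and then to evaluate the limit using our spectral-convergence result (the weak convergence in probability of $\mu_{\Delta_{d-1}^+(X_i)}$ to $\mu_{\Delta_{d-1}^+(T_{d,k})}$, which rests on the local convergence of $X_i$ to $T_{d,k}$) together with the explicit description of the limiting spectral measure of $T_{d,k}$ from \cite{Ro14}.

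\emph{Step 1: matrix-tree reduction.} Since $X_i$ has a complete $(d-1)$-skeleton on $n_i$ vertices, one first shows that with high probability $\widetilde\beta_{d-1}(X_i)=0$ --- this is one place where one expects to use $k>k(d)$, via cohomology/coboundary-expansion estimates for random Steiner complexes in the spirit of \cite{LLR19} --- so that $\mathcal{T}_d(X_i)\neq\emptyset$ and $\kappa_d(X_i)\in\BZ_{>0}$. On this event the simplicial matrix-tree theorem of Kalai and Duval--Klivans--Martin gives
\[
	\kappa_d(X_i)\cdot\kappa_{d-1}\!\big(K_{n_i}^{(d-1)}\big)=\prod_{\lambda}\lambda ,
\]
the product being over the $\binom{n_i-1}{d}$ non-trivial --- hence strictly positive --- eigenvalues of $\Delta_{d-1}^+(X_i)$, while $\kappa_{d-1}(K_{n_i}^{(d-1)})=n_i^{\binom{n_i-2}{d-1}}$ by Kalai's formula. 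Taking logarithms and dividing by $\binom{n_i}{d}$,
\[
	\frac{\log\kappa_d(X_i)}{\binom{n_i}{d}}=\int_{(0,\infty)}\log\lambda\;d\mu_{\Delta_{d-1}^+(X_i)}(\lambda)-\frac{\binom{n_i-2}{d-1}}{\binom{n_i}{d}}\log n_i ,
\]
and the second term is $O(n_i^{-1}\log n_i)\to 0$. It therefore suffices to show $\int_{(0,\infty)}\log\lambda\,d\mu_{\Delta_{d-1}^+(X_i)}(\lambda)\to\log\xi_{d,k}$ in probability.

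\emph{Step 2: passing to the limit.} Let $\mu_\infty:=\mu_{\Delta_{d-1}^+(T_{d,k})}$. From \cite{Ro14} one has the explicit form of $\mu_\infty$ and, crucially, that for $k>4d^2+d+2$ its non-trivial part is supported in an interval $[\alpha,\beta]$ with $\alpha>0$; moreover $\int_{(0,\infty)}\log\lambda\,d\mu_\infty(\lambda)=\log\xi_{d,k}$ (for $d=1$ this is McKay's evaluation of the $\log$-integral against the shifted Kesten--McKay law). The entries of $\Delta_{d-1}^+(X_i)$ are integers bounded by $k$, so its spectrum lies in $[0,k(d+1)]$; if in addition the non-trivial spectrum of $\Delta_{d-1}^+(X_i)$ is contained in $[\alpha_0,k(d+1)]$ for some deterministic $\alpha_0>0$ with high probability, then we may fix a bounded continuous $\phi$ with $\phi\equiv\log$ on $[\alpha_0,k(d+1)]$, write $\int_{(0,\infty)}\log\lambda\,d\mu_{\Delta_{d-1}^+(X_i)}=\int\phi\,d\mu_{\Delta_{d-1}^+(X_i)}-\phi(0)\,\mu_{\Delta_{d-1}^+(X_i)}(\{0\})$ --- the second term equal to $\tfrac{\binom{n_i-1}{d-1}}{\binom{n_i}{d}}\phi(0)=o(1)$ --- and apply our spectral-convergence result to the bounded continuous test function $\phi$ to get $\int\phi\,d\mu_{\Delta_{d-1}^+(X_i)}\to\int\phi\,d\mu_\infty=\log\xi_{d,k}$ in probability. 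Combining with Step 1 and exponentiating (a continuous operation) gives $\kappa_d(X_i)^{1/\binom{n_i}{d}}\to\xi_{d,k}$ in probability.

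\emph{The main obstacle} is the uniform spectral gap of $\Delta_{d-1}^+(X_i)$ at $0$ used in Step 2. Weak convergence of the empirical spectral distribution, together with the gap of $\mu_\infty$, only yields that $o\!\big(\binom{n_i}{d}\big)$ of the non-trivial eigenvalues of $X_i$ can lie below a fixed $\epsilon>0$; this is insufficient, since a priori such eigenvalues could be as small as $(k(d+1))^{-\binom{n_i}{d}}$, leaving a contribution of order $\binom{n_i}{d}$ to the $\log$-integral. One must therefore either establish a genuine deterministic lower bound on the bottom of the non-trivial spectrum of the random complex, holding with high probability in the regime $k>4d^2+d+2$ (the analogue of the fact that for $d=1$ the matching model is w.h.p. connected with a Laplacian gap, and the point where the hypothesis on $k$ is really used), or, alternatively, a quantitative version of the spectral convergence bounding the number of non-trivial eigenvalues below $\epsilon$ by $o\!\big(\binom{n_i}{d}/\log n_i\big)$ together with a polynomial-in-$n_i$ lower bound on the smallest non-trivial eigenvalue. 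Either way, the control of the near-kernel of $\Delta_{d-1}^+(X_i)$ is the heart of the matter.
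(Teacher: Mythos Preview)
Your strategy is correct and coincides with the paper's: matrix-tree reduction to a log-integral of the empirical spectral measure, then weak convergence to the arboreal spectral measure, with the essential difficulty being control of small non-trivial Laplacian eigenvalues. Two points, however, are left open in your write-up that the paper actually carries out.

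\textbf{The spectral gap.} You correctly flag this as the heart of the matter but do not supply the mechanism. The paper does not use coboundary-expansion estimates \`a la \cite{LLR19}; instead it proceeds via Garland's method applied to links. Each link $\mathrm{lk}(X_i,\sigma)$ of a $(d-2)$-face is a random $k$-regular graph from the matching model on $n_i-d+1$ vertices, so Friedman's theorem gives $\lambda_2(\mathrm{lk}(X_i,\sigma))\le 2\sqrt{k-1}+\varepsilon$ with probability $1-O(n_i^{-\tau(k)})$; a union bound over the $\binom{n_i}{d-1}$ links and Garland's method then bound all non-trivial eigenvalues of the \emph{adjacency} operator $A_{X_i}$ from above by $2d\sqrt{k-1}+\varepsilon$, eventually almost surely, provided $k>(d+1)^2+1$. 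Since $X_i$ need not be exactly $k$-regular, one shows separately (a short direct estimate) that all degrees lie in $[k-d-1,k]$ eventually, so $\|kI-D_{X_i}\|\le d+1$; Weyl's inequality then transfers the adjacency bound to a lower bound $k-2d\sqrt{k-1}-(d+1)-\varepsilon$ on the non-trivial spectrum of $\Delta_{d-1}^+(X_i)$. This is where the threshold $k>4d^2+d+2$ actually enters: it is precisely the condition making $k-2d\sqrt{k-1}-(d+1)>0$.

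\textbf{The value of the limiting integral.} You assert $\int_{(0,\infty)}\log\lambda\,d\mu_\infty=\log\xi_{d,k}$ as if it were available from \cite{Ro14}, but that reference only supplies the density of $\mu_\infty$; the evaluation of the log-integral against it is a separate (and not entirely trivial) calculation. The paper carries it out by expanding the density in Chebyshev polynomials and using the logarithmic generating function $\log(1-2xt+t^2)=-2\sum_{n\ge 1}T_n(x)t^n/n$, together with a closed-form summation of the resulting series. Without this computation, your proof does not identify the limit as the stated $\xi_{d,k}$.
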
 
  
The proof of Theorem \ref{thm:main_Thm1} is based on the asymptotic behaviour of the eigenvalues of uniform random Steiner systems. Recall that a sequence of random probability measures $\mu_n$ converges to a probability measure $\mu$ weakly in probability if $\lim_{n\to\infty}\BP(|\langle \mu_n,f\rangle - \langle \mu,f\rangle|>\varepsilon)=0$, for every $\varepsilon>0$ and every continuous and bounded function $f:\BR\to\BR$, where $\langle \mu_n,f\rangle = \int_\BR f(x)d\mu_n(x)$.
  
\begin{Theorem}\label{thm:main_Thm2}	
  	Let $(X_i)_{i=1}^\infty$ be a sequence of uniform random Steiner complexes with $(n_i)$ a sequence of $d$-admissible numbers such that $\lim_{i\to\infty}n_i=\infty$, and assume that $k\geq d+1$. Then $\mu_{\Delta_{d-1}^+(X_i)}$ converges weakly in probability to $\nu_{d,k}$ as $i\to\infty$, where $\nu_{d,k}$ is the unique Borel probability measure on $\BR$ such that for every Borel set $B\subset \BR$
\[ 
	\nu_{d,k}(B)= \int_{B\cap I_{d,k}}\frac{k\sqrt{4(k-1)d-(k-1+d-x)^2}}{2\pi x((d+1)k-x)}dx\,, 
\]
and
\[ 
	I_{d,k}=\big[(\sqrt{k-1}-\sqrt{d})^2,(\sqrt{k-1}+\sqrt{d})^2]\,.
\]

Similarly, $\mu_{A_{X_i}}$ converges weakly in probability to the probability measure $\mu_{d,k}$ obtained from $\nu_{d,k}$ via the transformation $x\mapsto k-x$ on $\BR$, i.e., for every Borel set $B\subset\BR$
\[
	\mu_{d,k}(B) = \int_{B\cap J_{d,k}}\frac{k\sqrt{4(k-1)d-(x-1+d)^2}}{2\pi(k-x)(dk+x)}dx\,,
\]
where $J_{d,k}=[-d+1-2\sqrt{(k-1)d},-d+1+2\sqrt{(k-1)d}]$.
\end{Theorem}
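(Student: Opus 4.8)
The plan is to establish Theorem \ref{thm:main_Thm2} by combining the local convergence of random Steiner complexes to the arboreal complex $T_{d,k}$ with the known spectral theory of $T_{d,k}$ from \cite{Ro14}, following the classical strategy of Kesten--McKay in the graph case. The empirical spectral distribution $\mu_{\Delta_{d-1}^+(X_i)}$ is a measure on the spectrum of a self-adjoint operator on $\Omega^{d-1}(X_i)$, whose dimension is $\binom{n_i}{d}$ (using the complete $(d-1)$-skeleton). To prove weak convergence in probability, it suffices to show convergence of moments (since $\nu_{d,k}$ is compactly supported and hence determined by its moments), i.e., for each fixed $m \geq 0$,
\[
	\langle \mu_{\Delta_{d-1}^+(X_i)}, x^m\rangle = \frac{1}{\binom{n_i}{d}}\mathrm{tr}\big((\Delta_{d-1}^+(X_i))^m\big) \longrightarrow \int_\BR x^m \, d\nu_{d,k}(x)
\]
in probability, and then translate back to weak convergence via a standard approximation argument. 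The transfer to $\mu_{A_{X_i}}$ is immediate from the algebraic identity relating $A_X$ and $\Delta_{d-1}^+(X)$ on the relevant subspace together with the pushforward under $x \mapsto k - x$, modulo bookkeeping of the trivial zero eigenvalues coming from coboundaries $dg$.

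First I would expand the trace of $(\Delta_{d-1}^+(X_i))^m$ (or equivalently a suitable power of $A_{X_i}$, adjusted by the diagonal degree term) as a sum over closed walks of length $m$ in the neighbouring graph on oriented $(d-1)$-faces defined in \cite{PR12}. The key observation is that the diagonal entry $(\Delta_{d-1}^+(X_i))^m(\sigma,\sigma)$ depends only on the isomorphism type of the $m$-neighbourhood of $\sigma$ in $X_i$; by the local convergence result quoted in the introduction, for every fixed $m$ this $m$-neighbourhood is, with probability tending to $1$, isomorphic to the $m$-neighbourhood of a $(d-1)$-face in the arboreal complex $T_{d,k}$, uniformly over all but a negligible fraction of faces $\sigma$. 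Therefore the normalized trace converges in probability to the corresponding ``diagonal'' spectral quantity of $\Delta_{d-1}^+(T_{d,k})$ at a base face, which by \cite{Ro14} is precisely the $m$-th moment of $\nu_{d,k}$ — here I would invoke the explicit description of the spectral measure (or Plancherel-type measure) of the Laplacian on $T_{d,k}$ computed there, which yields the Kesten--McKay-type density with support $I_{d,k}$.

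The main obstacle I expect is making the ``uniformly over all but a negligible fraction of faces'' statement precise and quantitative enough to control the trace. Two issues arise: (i) one needs the expected fraction of $(d-1)$-faces whose $m$-neighbourhood is \emph{not} tree-like (i.e. not isomorphic to the arboreal one) to vanish as $n_i \to \infty$ — this requires estimating the probability of short ``cycles'' (homological or combinatorial) through a fixed face in the union of $k$ random Steiner systems, which is where the hypothesis that $n_i$ is $d$-admissible and the permutation-invariance plus ``near-Steiner'' properties (1)--(3) listed in the excerpt come in; and (ii) one needs a second-moment / concentration argument showing that the random variable $\binom{n_i}{d}^{-1}\mathrm{tr}((\Delta_{d-1}^+(X_i))^m)$ concentrates around its mean, which amounts to a covariance bound between the local statistics at two faces $\sigma, \sigma'$ — again handled by showing that the joint $m$-neighbourhood of two ``typical'' faces is, with high probability, a disjoint union of two arboreal pieces. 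Uniform boundedness of the per-face contributions (each diagonal entry is bounded by $(\text{const}\cdot k)^m$ since degrees are at most $k$) makes the truncation of atypical faces harmless.

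Having obtained convergence of all moments in probability, I would finish by noting that $\nu_{d,k}$ is supported on the compact interval $I_{d,k}$ and is uniquely determined by its moments, so moment convergence in probability upgrades to weak convergence in probability by the usual argument (approximate a bounded continuous $f$ uniformly on a large interval by polynomials, and use that $\mu_{\Delta_{d-1}^+(X_i)}$ is, with high probability, supported in a fixed compact set because $\|\Delta_{d-1}^+(X_i)\| \leq 2k$). For the adjacency statement, since $A_{X_i} = D - \Delta_{d-1}^+(X_i)$ where $D$ is multiplication by $\deg(\sigma)$, and $\deg(\sigma) = k$ for all but a vanishing fraction of faces (the complex is $k$-regular off the overlaps of the $S_j$, whose number is $o(\binom{n_i}{d})$), the two empirical measures are asymptotically related by the affine map $x \mapsto k - x$, giving $\mu_{d,k}$ as the pushforward of $\nu_{d,k}$ and the stated density on $J_{d,k}$ after the change of variables.
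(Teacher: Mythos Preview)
Your outline matches the paper's proof: reduce to moment convergence via the compact-support criterion and Theorem \ref{Rosen}, expand the trace over closed walks in the oriented line-graph, and use the local convergence Theorem \ref{Ten-Thm3} to identify the per-face contribution with the arboreal one for all but a vanishing fraction of faces. The one overcomplication is your step (ii): no second-moment or covariance bound is needed, since the difference between the normalized trace and the \emph{deterministic} tree moment is bounded pointwise by a constant times the fraction of $(d-1)$-faces whose $\lceil m/2\rceil$-neighbourhood is not tree-like, and a first-moment (Markov) bound using only $\BP(E_\sigma^c)\leq C_{m,d,k}/n_i$ from Theorem \ref{Ten-Thm3} already drives this fraction to zero in probability.
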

  
The limiting probability measure $\mu_{d,k}$ from Theorem \ref{thm:main_Thm2} is a high-dimensional variant of the Kesten-McKay distribution $\mu_{1,k}$, which is known to be the spectral measure of the Laplacian of the $k$-regular tree $T_{1,k}=T_{k}$, c.f. \cite{Ke59}. Similarly, it was shown in \cite{Ro14} that $\mu_{d,k}$ is the spectral measure of the upper Laplacian of the arboreal complex $T_{d,k}$.

Finally, the main ingredient in the proof of Theorem \ref{thm:main_Thm2} is a local type convergence result for random $(d,k,n)$-Steiner complexes to $T_{d,k}$. 
\begin{Theorem}\label{thm:main_Thm3}
	Let $(X_i)_{i=1}^\infty$ be a sequence of random $(d,k,n_i)$-Steiner, with $(n_i)$ a sequence of $d$-admissible numbers such that $\lim_{i\to\infty}n_i=\infty$. Then for every $r>0$ the following holds with high probability: The $r$-neighboring complex of a fixed $(d-1)$-face in $X_i$ (see Definition \ref{defn:neighboring_complex}  and Theorem \ref{Ten-Thm3} for precise statements) is isomorphic to the $r$-neighboring complex of any $(d-1)$-face in $T_{d,k}$. 
\end{Theorem}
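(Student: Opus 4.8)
The plan is to show that for a fixed $(d-1)$-face $\sigma_0$, with high probability no "short-range collision" occurs in the breadth-first exploration of $X_i$ starting from $\sigma_0$, so that the exploration process reproduces exactly the deterministic tree-like branching that defines $T_{d,k}$. First I would make precise the notion of the $r$-neighbouring complex: starting from $\sigma_0$, repeatedly adjoin all $d$-faces $\tau$ of $X_i$ containing a $(d-1)$-face already reached, together with all their subfaces, and iterate $r$ times; call the result $\mathcal{B}_r(\sigma_0,X_i)$. Since $X_i=K_{n_i-1}^{(d-1)}\cup\bigcup_{j=1}^k S_j$ and each $S_j$ is a Steiner system, every $(d-1)$-face reached has degree exactly $k$ provided the $S_j$ are disjoint on that face; the structure of $T_{d,k}$ is precisely the one obtained when, at every stage, the $k$ (or $k-1$ for faces other than the root's own) $d$-faces attached to a $(d-1)$-face introduce mutually distinct, entirely new vertices. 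So the isomorphism $\mathcal{B}_r(\sigma_0,X_i)\cong \mathcal{B}_r(\text{any }(d-1)\text{-face}, T_{d,k})$ holds iff the following "good event" $G_r$ occurs: (i) no two of the systems $S_1,\dots,S_k$ share a $d$-face incident to the explored region, and (ii) each newly revealed $d$-face uses a vertex set meeting the previously explored vertex set in exactly the single $(d-1)$-face through which it was reached — i.e. no unexpected coincidences among the new vertices, either within one exploration step or across steps.

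The core estimate is a union bound over potential collisions. I would argue by revealing the Steiner systems face-by-face along the exploration. The number of faces in $\mathcal{B}_r(\sigma_0,T_{d,k})$ is a constant $C=C(d,k,r)$ independent of $n_i$, so the exploration touches at most $C$ vertices and queries the $S_j$ at most $C$ times. For each query — "which $d$-face of $S_j$ contains this particular $(d-1)$-face $\rho$?" — the key input is a near-uniformity statement: conditioned on the finitely many faces of $S_j$ already revealed, the $d$-face of $S_j$ through $\rho$ is, up to $1+o(1)$ factors, uniformly distributed over the roughly $n_i-d$ admissible completions $\rho\cup\{v\}$ with $v$ a fresh vertex. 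This is where the permutation-invariance of the distribution of each $S_j$ (property 3 in the list of allowed distributions), together with the fact that with high probability $S_j$ is genuinely a Steiner system (property 2) and the bounded-degree property (property 1), must be leveraged — one shows that the probability that the revealed $d$-face lands on any particular already-seen vertex, or that two distinct queries land on a common fresh vertex, is $O(1/n_i)$. Summing $O(C^2)$ such events gives $\BP(G_r^c)=O(C(d,k,r)^2/n_i)\to 0$, which is the claim.

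The main obstacle I expect is precisely the conditional near-uniformity step: the uniform (or Keevash) distribution on Steiner systems is not a product measure, so revealing some $d$-faces of $S_j$ changes the conditional law of the remaining ones in a correlated way, and one cannot simply treat successive queries as independent near-uniform draws. I would handle this by exploiting permutation-invariance more carefully — after conditioning on the revealed portion, the relabelling group fixing the revealed vertices still acts transitively on the candidate fresh vertices, which forces the conditional distribution of the next $d$-face's new vertex to be uniform over fresh vertices on the event that it is fresh, and then bounding the probability of the complementary (non-fresh, "collision") event by a counting argument: there are $O(1)$ bad target vertices out of $\Theta(n_i)$, and any deviation from exact Steiner-ness is absorbed into the $o(1)$ from property 2. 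An alternative, if one prefers to avoid delicate conditioning, is a second-moment / switching argument counting the expected number of collision configurations directly in the union $\bigcup S_j$; either way the quantitative content is the $O(1/n_i)$ collision bound, and assembling it into $G_r$ and then translating $G_r$ into the stated isomorphism is routine bookkeeping about the definition of $T_{d,k}$.
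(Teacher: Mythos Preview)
Your overall strategy is right and matches the paper's: the three collision types you isolate --- (i) two Steiner systems sharing a $d$-face through a boundary $(d-1)$-face, (ii) a new $d$-face reusing an already-seen vertex, (iii) two new $d$-faces sharing a fresh vertex --- are exactly the events the paper rules out in its induction on $r$ (Theorem~\ref{Ten-Thm3}).

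There is, however, a real gap in your primary approach. You correctly note that, after conditioning on the revealed portion, permutation invariance makes the next new vertex uniform over \emph{fresh} vertices on the event that it is fresh. But your ``counting argument: $O(1)$ bad target vertices out of $\Theta(n_i)$'' for the complementary event does not follow: the already-seen vertices are precisely the ones fixed by the residual symmetry group, so invariance says nothing about the conditional probability of landing on any one of them. For a general permutation-invariant law on Steiner systems there is no a priori reason this conditional probability is $O(1/n)$; establishing it would amount to proving an approximate-independence statement for faces of a uniform Steiner system, which is genuinely nontrivial and not supplied by your sketch.

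The paper avoids this entirely by taking your stated alternative. It never conditions on the exploration; instead it proves two \emph{unconditional} estimates using only permutation invariance and counting: $\BP(\tau\in S)=1/(n-d)$ for any fixed $(d+1)$-set $\tau$ (Proposition~\ref{Ten-Prop1}), and the two-point bound $\BP(v\sigma,\,v\sigma'\in S)\le C_d/n^2$ for distinct $\sigma,\sigma'\in\binom{[n]}{d}$ and $v\notin\sigma\cup\sigma'$ (Proposition~\ref{Ten-Prop3}). The latter is the key technical ingredient missing from your outline; it is obtained by averaging the indicator over all triples $(\sigma_1,\sigma_2,u)$ with $|\sigma_1\cap\sigma_2|=j$ and using that a fixed $j$-face lies in a deterministic number of $d$-faces of any Steiner system. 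With these in hand, the induction step bounds $\BP(E_r\cap E_{r+1}^c)$ by a union bound over the boundedly many $(\sigma',v)$ and $(\sigma',\sigma'',v)$ in the $r$-ball, each term controlled by the one- or two-point estimate times a combinatorial constant depending only on $d,k,r$. This gives $\BP\big(X(\sigma,r)\not\cong T_{d,k}(r)\big)\le C_{r,d,k}/n$, which is the quantitative form of the theorem.

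So: drop the sequential-revelation route, make the two-point bound explicit, and your alternative becomes the paper's proof.
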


\paragraph{Conventions.} We use $C$ to denote a generic large positive and finite constant, which may depend on some fixed parameters, and whose value may change from one expression to the next. If $C$ depends on some parameter $k$, we sometimes emphasize this dependence by writing $C_k$ instead of $C$. The letters $d, i, j, k, l, m, n, r,\ell$ are always used to denote an element in $\BN=\{1,2,\ldots\}$ or in $\BN_0=\{0,1,2,\ldots\}$ with $d$ being used to denote the dimension of a complex, $k$ the maximal degree of a $(d-1)$-face in it and $n$ a $d$-admissible number. 
   
  
\section{Local convergence of uniform random Steiner systems} \label{local}

\subsection{$r$-neighboring complex and local convergence}

In this section we define a local structure for simplicial complexes and prove that the local structure of uniform random Steiner complex converges in probability to that of the arboreal complex. The locality is defined with respect to a metric on the $(d-1)$-faces of the complex. Given a $d$-complex $X$, define its \emph{line-graph} $G_d(X)$ to be the graph whose vertex set is $X^{d-1}$ and its edge set $E_d(X)$ is defined as $\{\sigma,\sigma'\}\in X^{d-1}\times X^{d-1}$ such that $\sigma\cup\sigma' \in X^d$. We denote by $\mathrm{dist}_{G_d(X)}$ the graph distance in $G_d(X)$ and for $r\geq 0$ and $\sigma_0\in X^{d-1}$, define $B_r(\sigma_0,X)$ to be the ball of radius $r$ in $G_d(X)$ around $\sigma_0$, namely 
\[
	B_r(\sigma_0,X)=\{\sigma\in X^{d-1} ~:~ \mathrm{dist}_{G_d(X)}(\sigma_0,\sigma)\leq r\}\,.
\]

\begin{defn}[$r$-neighboring complex]\label{defn:neighboring_complex}
	Let $X$ be a $d$-dimensional simplicial complex, $\sigma_0\in X^{d-1}$ and $r\geq 0$. The $r$-neighborhood complex of $\sigma_0$ in $X$, denoted $X(\sigma_0,r)$, is defined to be  the subcomplex $Y\subseteq X$ satisfying
\begin{enumerate}
	\item $Y^{d-1}= B_r(\sigma_0,X)$.
	\item $Y^d=\big\{\tau\in X^d ~:~ \partial\tau\subset Y^{d-1}\big\}$.
	\item $Y^{\ell}=\big\{ \eta\in X^{\ell} ~:~ \eta \subseteq \sigma \; \text{for some} \; \sigma \in Y^{d-1} \big\}$, for every $\ell<d-1$.
\end{enumerate}
\end{defn}

Note that $X(\sigma_0,0)$ is the complex whose faces are subsets of $\sigma_0$, and it contains no $d$-faces. Hence $X(\sigma_0,0)$ is a $(d-1)$-dimensional complex. If $\deg(\sigma_0)=0$, then $X(\sigma_0,r)=X(\sigma_0,0)$ for all $r>0$. On the other hand, if $\deg(\sigma_0)>0$, then $X(\sigma_0,r)$ is $d$-dimensional for all $r>0$. 

For $-1\leq \ell \leq \dim(X(\sigma_0,r))$ and $r\geq 0$, we denote by $X^\ell(\sigma_0,r)$ the $\ell$-dimensional faces of $X(\sigma_0,r)$ and by 
\[	
	\partial X^\ell(\sigma_0,r) = X^\ell(\sigma_0,r)\setminus X^{\ell}(\sigma_0,r-1)
\] 
the $\ell$-faces of $X(\sigma_0,r)$ which are not in $X(\sigma_0,r-1)$, where we define $X^\ell(\sigma_0,-1)=\emptyset$. The latter can be heuristically thought of as the number of $\ell$-faces at distance $r$ from $\sigma_0$.

\begin{Claim} \label{loc-claim}
	Let $X$ be a $d$-complex and $\sigma_0\in X^{d-1}$ such that $\deg(\sigma_0)\geq 1$. Then
\begin{enumerate}
	\item $X(\sigma_0,r)$ is pure for all $r\geq 0$.
	\item If $\sigma \in X^{d-1}$ satisfies $deg_{X}(\sigma)=k$, then $\deg_{X(\sigma_0,r)}(\sigma)=k$ for all $r>\mathrm{dist}_{G_d(X)}(\sigma_0,\sigma)$.
	\item If $v\in \partial X^0(\sigma_0,r)$, then there exists $\sigma \in X^{d-1}(\sigma_0,r-1)$ such that $v\sigma\in \partial X^d(\sigma_0,r)$.
	\item If $v\notin X^0(\sigma_0,r)$, then $\tau \notin X(\sigma_0,r)$ for any $\tau \in X$ satisfying $v\in \tau$. 
	\item For every $r>0$, it holds that $|\partial X^0(\sigma_0,r)|\leq |\partial X^d(\sigma_0,r)|$. Furthermore, if $|\partial X^0(\sigma_0,r)|=|\partial X^d(\sigma_0,r)|$, then each of the $d$-faces in $\partial X^d(\sigma_0,r)$ contains exactly one vertex from $\partial X^0(\sigma_0,r)$. 
\end{enumerate}
\end{Claim}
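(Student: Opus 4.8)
The plan is to prove the five statements in order, since each one tends to feed into the next. For part (1), I would argue by induction on $r$: the faces of $X(\sigma_0,0)$ are exactly the subsets of $\sigma_0$, which is pure of dimension $d-1$ (vacuously, as it has no $d$-faces but every face is contained in $\sigma_0$ itself); for the inductive step, a $(d-1)$-face $\sigma$ at distance $\le r$ lies on a geodesic path in $G_d(X)$ from $\sigma_0$, and the last edge of that path comes from a $d$-face $\tau$ with $\partial\tau\subseteq B_r(\sigma_0,X)$, hence $\tau\in X^d(\sigma_0,r)$ and $\sigma\subset\tau$; lower-dimensional faces are handled by condition (3) of Definition \ref{defn:neighbouring_complex}. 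For part (2), note $\deg_{X(\sigma_0,r)}(\sigma)\le\deg_X(\sigma)$ always; conversely, if $r>\mathrm{dist}(\sigma_0,\sigma)$ then every $d$-face $\tau\supset\sigma$ has all its boundary $(d-1)$-faces within distance $\mathrm{dist}(\sigma_0,\sigma)+1\le r$ of $\sigma_0$ (they are all adjacent to $\sigma$ in $G_d(X)$), so $\partial\tau\subseteq B_r(\sigma_0,X)$ and $\tau\in X^d(\sigma_0,r)$, giving $\deg_{X(\sigma_0,r)}(\sigma)=\deg_X(\sigma)=k$.

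For part (3): if $v\in\partial X^0(\sigma_0,r)=X^0(\sigma_0,r)\setminus X^0(\sigma_0,r-1)$, then by condition (3) of Definition \ref{defn:neighbouring_complex} there is a $(d-1)$-face $\sigma\in X^{d-1}(\sigma_0,r)$ with $v\in\sigma$, and by part (1) (purity) $\sigma$ lies in some $\tau\in X^d(\sigma_0,r)$. Writing $\tau=v'\sigma$ for the remaining vertex $v'$, one checks that $\partial\tau\subseteq B_r(\sigma_0,X)$, and that among the $(d-1)$-faces of $\partial\tau$ at least one, say $\sigma'$, lies in $B_{r-1}(\sigma_0,X)$ — otherwise $v$ together with the structure of $\tau$ would force $v\in X^0(\sigma_0,r-1)$ via one of those faces. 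Then $\sigma'\in X^{d-1}(\sigma_0,r-1)$ and, since $v\notin X^0(\sigma_0,r-1)$, we must have $v\notin\sigma'$, so $\sigma'=\tau\setminus\{v\}$, i.e. $\tau=v\sigma'\in\partial X^d(\sigma_0,r)$ (it is not in $X^d(\sigma_0,r-1)$ because it contains $v$, and use part (4)). Part (4) is almost immediate from the definition: if $v\notin X^0(\sigma_0,r)$ and $\tau\in X(\sigma_0,r)$ with $v\in\tau$, then by conditions (2)–(3) of the definition $\tau$ (or a $(d-1)$-face containing the relevant vertices) would place $v$ in $X^0(\sigma_0,r)$, a contradiction; I would state this as the contrapositive and check it against each of the three cases ($\ell<d-1$, $\ell=d-1$, $\ell=d$) in the definition.

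Part (5) is the substantive one and I expect it to be the main obstacle. The idea is to define a map $\Phi:\partial X^0(\sigma_0,r)\to\partial X^d(\sigma_0,r)$ and show it is injective: by part (3), for each $v\in\partial X^0(\sigma_0,r)$ choose $\sigma_v\in X^{d-1}(\sigma_0,r-1)$ with $v\sigma_v\in\partial X^d(\sigma_0,r)$, and set $\Phi(v)=v\sigma_v$. For injectivity, suppose $\Phi(v)=\Phi(w)=\tau$ with $v\ne w$; then both $v$ and $w$ lie in $\tau$, and $\sigma_v=\tau\setminus\{v\}$, $\sigma_w=\tau\setminus\{w\}$ are both in $X^{d-1}(\sigma_0,r-1)$. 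But then $w\in\sigma_v\subseteq$ something at distance $\le r-1$, forcing $w\in X^0(\sigma_0,r-1)$ by condition (3) of the definition, contradicting $w\in\partial X^0(\sigma_0,r)$. Hence $\Phi$ is injective and $|\partial X^0(\sigma_0,r)|\le|\partial X^d(\sigma_0,r)|$. For the "furthermore" clause: if equality holds, $\Phi$ is a bijection; I claim each $\tau\in\partial X^d(\sigma_0,r)$ contains exactly one vertex of $\partial X^0(\sigma_0,r)$. At least one, because $\tau\notin X^d(\sigma_0,r-1)$ means $\partial\tau\not\subseteq B_{r-1}(\sigma_0,X)$, hence $\tau$ has a $(d-1)$-face not in $X(\sigma_0,r-1)$, and a short argument (again via the definition) shows $\tau$ then has a vertex not in $X^0(\sigma_0,r-1)$, i.e. in $\partial X^0(\sigma_0,r)$ (using part (4) to rule out vertices outside $X^0(\sigma_0,r)$ entirely). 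At most one: if $\tau$ contained two vertices $v,w\in\partial X^0(\sigma_0,r)$, then since $\Phi$ is a bijection some $\tau'\ne\tau$ would have to be the image of whichever of $v,w$ is not matched to $\tau$, but both $v$ and $w$ are "available" only through $d$-faces containing them, and a counting/double-counting argument on the incidence between $\partial X^0(\sigma_0,r)$ and $\partial X^d(\sigma_0,r)$ — each vertex in the former lies in $\ge 1$ face of the latter, each face of the latter contains $\ge 1$ vertex of the former, and the totals are equal — forces every face of the latter to contain exactly one such vertex. I would phrase this last step cleanly as: the bipartite incidence graph between $\partial X^0(\sigma_0,r)$ and $\partial X^d(\sigma_0,r)$ has minimum degree $\ge 1$ on both sides and equal vertex counts, and $\Phi$ exhibits a perfect matching; combined with the fact (from part (3) applied carefully) that the map $\tau\mapsto\tau\cap\partial X^0(\sigma_0,r)$ is well-defined into nonempty sets, one deduces each such set is a singleton. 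The delicate point to get right is exactly why a $d$-face newly appearing at radius $r$ cannot pick up two new vertices — this rests on the observation that a $d$-face has $d+1$ vertices and $d+1$ facets, the facets newly appearing are precisely those containing a new vertex, and a geodesic argument in $G_d(X)$ shows a face at distance exactly $r$ reaches back to distance $r-1$ through a facet, which already accounts for $d$ of its vertices lying in $X^0(\sigma_0,r-1)$.
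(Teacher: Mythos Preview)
Your overall strategy matches the paper's, but there is a genuine gap in your argument for part~(3), and your treatment of the ``furthermore'' in part~(5) is more laboured than necessary.

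\textbf{Part (3).} You choose $\sigma\in X^{d-1}(\sigma_0,r)$ with $v\in\sigma$, and then invoke purity to get \emph{some} $\tau\in X^d(\sigma_0,r)$ containing $\sigma$. You then claim that at least one facet $\sigma'$ of $\tau$ lies in $B_{r-1}(\sigma_0,X)$, with the justification ``otherwise \ldots\ would force $v\in X^0(\sigma_0,r-1)$''. This step does not go through: nothing prevents all $d+1$ facets of an arbitrary $\tau\in\partial X^d(\sigma_0,r)$ from sitting at distance exactly $r$ (think of a $d$-face whose boundary faces happen to land in the same BFS level). The fix, which is what the paper does, is to pick $\tau$ correctly rather than arbitrarily: take a geodesic $\sigma_0=\rho_0,\rho_1,\ldots,\rho_r=\sigma$ in $G_d(X)$, set $\sigma'=\rho_{r-1}$, and let $\tau=\sigma\cup\sigma'$. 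Then $\sigma'\in X^{d-1}(\sigma_0,r-1)$ by construction, and $v\notin\sigma'$ (else $v\in X^0(\sigma_0,r-1)$), so $\tau=v\sigma'$; one then checks directly that every facet of $\tau$ lies in $B_r(\sigma_0,X)$ while $\tau\notin X^d(\sigma_0,r-1)$ because $\sigma\notin B_{r-1}$. Purity is not needed here.

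\textbf{Part (5).} Your injection $\Phi(v)=v\sigma_v$ and its injectivity proof are correct and coincide with the paper's argument. For the equality case you work much harder than needed. Once $\Phi$ is a bijection, every $\tau\in\partial X^d(\sigma_0,r)$ is of the form $\Phi(v)=v\sigma_v$ with $\sigma_v\in X^{d-1}(\sigma_0,r-1)$; since all vertices of $\sigma_v$ lie in $X^0(\sigma_0,r-1)$, we get $\tau\cap\partial X^0(\sigma_0,r)=\{v\}$ immediately. There is no need for a separate ``at least one / at most one'' split, bipartite incidence graphs, or double counting.
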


\begin{proof}$~$
\begin{enumerate}
	\item Note that $X(\sigma_0,0)=\{\sigma : \sigma\subseteq \sigma_0\}$, and hence it is a pure $(d-1)$-dimensional simplicial complex. Assume next that $r>0$. Since $\deg_X(\sigma_0)\geq 1$, by definition $\sigma_0$ is contained in at least one $d$-face in $X$ which is also contained in $X(\sigma_0,r)$. Furthermore, for every $\sigma\in X^{d-1}(\sigma_0,r)\setminus{\sigma_0}$, there exists a sequence $\{\sigma_i\}_{i=1}^r$ such that $\sigma_r=\sigma$ and $\mathrm{dist}_{G_d(X)}(\sigma_{i-1},\sigma_i)=1$ for all $1\leq i\leq d$. In particular $\sigma$ is contained in the $d$-face $\sigma \cup \sigma_{r-1}$. Since all faces of $X(\sigma_0,r)$ of dimension strictly less than $d-1$ are contained is at least one $(d-1)$-faces by definition, we conclude that $X(\sigma_0,r)$ is pure. 

	\item Since $X(\sigma_0,r)$ is a sub-complex of $X$, it is enough to show that every $d$-face containing $\sigma$ in $X$ is also in $X(\sigma_0,r)$. Let $\tau$ be such a $d$-face, and let $\sigma\neq \sigma'\in X^{d-1}$ be a different $(d-1)$ face of $\tau$. Then $\mathrm{dist}_{G_d(X)}(\sigma,\sigma')=1$ and we can conclude by the triangle inequality that $\mathrm{dist}_{G_d(X)}(\sigma_0,\sigma')\leq r$ and hence $\sigma' \in X(\sigma_0,r)$. Since this is true for all $(d-1)$-faces of $\tau$, it follows that $\tau \in X(\sigma_0,r)$ as required.

	\item From the definition of $\partial X^0(\sigma_0,r)$, we know that there exist $\sigma\in \partial X^{d-1}(\sigma_0,r)$ and $\sigma'\in \partial X^{d-1}(\sigma_0,r-1)$ such that $v\in \sigma$, $v\notin \sigma'$ and $\sigma \cup \sigma' \in X^d$. In particular $\tau\equiv \sigma \cup \sigma'=\sigma'\cup \{v\} \in X^d$. Since $\si\in \partial X^{d-1}(\sigma_0,r)$, it follows that $\sigma\notin X^{d-1}(\sigma_0,r-1)$ and hence that $\tau\notin X^d(\sigma_0,r-1)$. In order to show that $\tau\in \partial X^d(\sigma_0,r)$, it remains to show that each of its $(d-1)$-faces is in $X^{d-1}(\sigma_0,r)$. Given a $(d-1)$-face $\sigma''\subset \tau$ distinct from $\sigma$, it follows that $\tau=\sigma\cup\sigma''=\sigma'\cup \{v\}\in X^d$ and therefore, by the triangle inequality, that $\mathrm{dist}_{G_d(x)}(\sigma'',\sigma_0)\leq r$. Hence $\si''\in X^{d-1}(\sigma_0,r)$, as required. 
	
\item This follows from the fact that $X(\sigma_0,r)$ is always a simplicial complex.

\item The first inequality follows from (3.) and the fact that two $d$-faces containing a single vertex in $\partial X^0(\sigma_0,r)$ and a $(d-1)$-face in $X^{d-1}(\sigma_0,r-1)$ are distinct if and only if the $0$-face in $\partial X^0(\sigma_0,r)$ are distinct. If equality holds, then by (3.) we obtain that each of the $d$-faces in $\partial X^d(\sigma_0,r)$ contains a unique vertex from $\partial X^0(\sigma_0,r)$. 
\end{enumerate}
\end{proof}

Next, we turn to define the notion of local convergence. 

\begin{defn}\label{defn_local_confergence}
A pair $(X,\sigma)$, where $X$ is a $d$-complex and $\sigma\in X^{d-1}$ is called a pointed $d$-complex.
\begin{enumerate}
  		\item A sequence of pointed $d$-complexes $\{(X_i,\sigma_i)\}$ is said to locally converge to a pointed $d$-complex $(X_0,\sigma_0)$ if for all $r>0$, there exists $N_r\in \mathbb{N}$, such that $X_i(\sigma_i,r)\cong X_0(\sigma_0,r)$ for all $i>N_r$. 
  		\item A sequence of random pointed $d$-complexes $\{(X_i,\sigma_i)\}$ is said to converge locally in probability to a random pointed simplicial complex $(X_0,\sigma_0)$, if for all $r>0$,
\[
  \lim_{i\to\infty} \mathbb{P}\big( X_i(\sigma_i,r)\not\cong X_0(\sigma_0,r) \big)= 0\,. 
\]
		\item A sequence of random $d$-complexes $\{X_i\}$ is said to converge locally in probability to a deterministic pointed $d$-complex $(X,\sigma_0)$ if the random sequence $(X_i,\sigma_i)$ converges locally to $(X,\sigma_0)$, where given $X_i$, $\sigma_i$ is chosen uniformly at random from $X_i^{d-1}$. In other words, local convergence of unpointed simplicial complexes is always defined with respect to the uniform distribution on the $(d-1)$-faces. 
\end{enumerate}
\end{defn} 

\subsection{Simplicial isomorphism with balls in the arboreal complex}
  
Let $X$ and $Y$ be two complexes and recall the definition of simplicial maps and simplicial isomorphisms from Subsection \ref{subsec:Preliminaries}.
%
The following lemma provides simpler conditions for a map to be simplicial using the notion of a maximal face, namely a maximal element of the complex with respect to inclusion. 

\begin{lemma}\label{iso-lemma}
	Let $X,Y$ be $d$-dimensional simplicial complexes and let $f:X^0\rightarrow Y^0$ be a bijection such that $f[\tau]$ is a face in $Y$ for every maximal face $\tau\in X$. Then $f$ is a simplicial map and $\hat{f}:X\rightarrow Y$ is injective. Furthermore, if $f^{-1}:Y^0\rightarrow X^0$ also satisfies that $f^{-1}[\tau']\in X$ for every maximal face $\tau'\in Y$, then $\hat{f}:X\rightarrow Y$ is a simplicial isomorphism.
\end{lemma}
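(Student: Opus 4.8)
The plan is to verify the two assertions of Lemma~\ref{iso-lemma} separately, starting from the hypothesis that $f$ is a bijection on vertices carrying maximal faces of $X$ to faces of $Y$.

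\emph{Step 1: $f$ is a simplicial map.} Let $\sigma\in X$ be an arbitrary face. Since $X$ is a simplicial complex, $\sigma$ is contained in some maximal face $\tau\in X$ (here one uses that $X$ is finite-dimensional, or more precisely that every face extends to a maximal one). Then $\sigma\subseteq\tau$, so $f[\sigma]\subseteq f[\tau]$. By hypothesis $f[\tau]\in Y$, and since $Y$ is closed under inclusion, $f[\sigma]\in Y$. Hence $f$ is simplicial and induces $\hat f\colon X\to Y$.

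\emph{Step 2: $\hat f$ is injective.} Suppose $\sigma,\sigma'\in X$ with $f[\sigma]=f[\sigma']$. Because $f$ is a bijection on $X^0$, it is in particular injective on vertices, so $f$ restricted to any finite vertex set is injective; thus $|f[\sigma]|=|\sigma|$ and $|f[\sigma']|=|\sigma'|$, and from $f[\sigma]=f[\sigma']$ we get $|\sigma|=|\sigma'|$. Moreover, applying the (set-theoretic) inverse of $f$ pointwise to the equality $f[\sigma]=f[\sigma']$ gives $\sigma=\sigma'$. Hence $\hat f$ is injective.

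\emph{Step 3: the symmetric hypothesis yields an isomorphism.} Now assume in addition that $f^{-1}\colon Y^0\to X^0$ carries every maximal face of $Y$ to a face of $X$. By Steps~1--2 applied to $f^{-1}$ (which is again a bijection on vertices), $f^{-1}$ is simplicial and induces an injective map $\widehat{f^{-1}}\colon Y\to X$. One checks directly from the definitions that $\widehat{f^{-1}}$ is the set-theoretic inverse of $\hat f$: for $\sigma\in X$, $\widehat{f^{-1}}(\hat f(\sigma))=f^{-1}[f[\sigma]]=\sigma$, and symmetrically $\hat f(\widehat{f^{-1}}(\tau'))=\tau'$ for $\tau'\in Y$. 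Therefore $\hat f$ is a bijection $X\to Y$, i.e.\ $f$ is a simplicial isomorphism.

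The argument is essentially bookkeeping; the only point requiring a moment's care — the ``main obstacle'', such as it is — is the claim in Step~1 that every face of $X$ is contained in a maximal face. This is immediate for finite complexes and, more generally, follows from $\dim(X)=d<\infty$: any $\ell$-face with $\ell<d$ that is not already maximal is properly contained in some face of strictly larger dimension, and iterating this at most $d-\ell$ times reaches a maximal face. All the complexes considered in this paper are finite, so no subtlety arises; I would simply remark on this and then carry out Steps~1--3 as above.
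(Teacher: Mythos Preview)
Your proof is correct and follows essentially the same approach as the paper: both use finite-dimensionality to embed an arbitrary face in a maximal one, then invoke closure under inclusion. The only cosmetic difference is in the final step: the paper proves surjectivity of $\hat f$ directly by tracing a face $\sigma'\in Y$ back through $f^{-1}$, whereas you more symmetrically reapply Steps~1--2 to $f^{-1}$ and check that $\widehat{f^{-1}}$ inverts $\hat f$; these are the same argument in slightly different packaging.
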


\begin{proof}
    To show that $\hat{f}$ is a simplicial map, we need to show that $f[\sigma]$ is a face in $Y$ for every face $\sigma\in X$. Since $X$ is finite dimensional, every such face $\sigma$ is contained in some maximal face $\tau$. By our assumption $f[\tau]$ is a face in $Y$, with $f[\sigma]$ as its subset. Since $Y$ is a simplicial complex, we conclude that $f[\sigma]\in Y$. Hence $f$ is simplicial. Also, $\hat{f}$ is injective since $f$ is injective. 
    
    Assume next that $f^{-1}:Y^0\to X^0$ satisfies the additional condition. Due to the first part, all that remains to show is that $\hat{f}$ is surjective. Let $\sigma'\in Y$, and denote its pre-image under $f$ by $\sigma:=f^{-1}[\sigma']$. since $Y$ is finite dimensional, there exists some $\tau'$ which contains $\sigma'$. By assumption we know that $\tau:=f^{-1}[\tau']$ is a face in $X$ with $\sigma$ as its subset. Since $X$ is a simplicial complex, we know that $\sigma$ is a face in $Y$. Thus, we have shown that there exists $\sigma\in X$ such that $\hat{f}(\sigma)=\sigma'$, i.e., that $\hat{f}$ is surjective.
\end{proof}  
  
We are interested in proving local convergence of uniform random Steiner systems to the appropriate arboreal complex. Since the arboreal complex is transitive, i.e., for every $\sigma_1,\sigma_2\in T_{d,k}$ there exists a simplicial automorphism of $T_{d,k}$ taking $\sigma_1$ into $\sigma_2$, it follows that $T_{d,k}(\sigma_1,r)$ and $T_{d,k}(\sigma_2,r)$ are isomorphic for all $r\geq 0$. Hence, we use the abbreviation $T_{d,k}(r)$ to denote any of the above $r$-neighboring complexes in $T_{d,k}$. 
 
\begin{Claim}\label{Ten-Claim1}
	Let $X$ be a finite, pure $d$-complex, $\sigma_0\in X^{d-1}$ and $r_0\geq 0$. Then $X(\sigma_0,r_0) \cong T_{d,k}(r_0)$ if and only if the following conditions hold
	
\begin{enumerate}[(1)]
	\item $|X^0(\sigma_0,r)|= |T^0_{d,k}(r)|$ for all $r\leq r_0$.
	\item $|\partial X^d(\sigma_0,r)|=|\partial T^d_{d,k}(r)|$ for all $r\leq r_0$.
	\item $\deg_X(\sigma)=k$ for all $\sigma \in X^{d-1}$ satisfying $\mathrm{dist}_{G_d(X)}(\sigma,\sigma_0)\leq r_0-1$.
\end{enumerate}
\end{Claim}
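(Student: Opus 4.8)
The plan is to prove both implications by induction on $r_0$, with the real content sitting in the backward direction. For the forward direction, assume $X(\sigma_0,r_0)\cong T_{d,k}(r_0)$ via a simplicial isomorphism $\varphi$. Since $\varphi$ preserves the line-graph distance from the base $(d-1)$-face, it restricts to bijections $X^\ell(\sigma_0,r)\to T^\ell_{d,k}(r)$ and $\partial X^d(\sigma_0,r)\to\partial T^d_{d,k}(r)$ for every $r\le r_0$, giving the first two conditions; and since $T_{d,k}$ is $k$-regular, every $(d-1)$-face of $T_{d,k}(r_0)$ at distance $\le r_0-1$ from the base has degree $k$ inside $T_{d,k}(r_0)$ by Claim~\ref{loc-claim}(2), so the same holds for $X$ after transporting through $\varphi$, which is the third condition.

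For the backward direction I would argue by induction on $r_0$. The base case $r_0=0$ is immediate: $X(\sigma_0,0)$ and $T_{d,k}(0)$ are both the full $(d-1)$-simplex on a $d$-element vertex set, and the first condition gives $|X^0(\sigma_0,0)|=|T^0_{d,k}(0)|=d$. For the inductive step, suppose the three conditions hold up to $r_0$ and that we already have an isomorphism $\varphi\colon X(\sigma_0,r_0-1)\xrightarrow{\ \sim\ }T_{d,k}(r_0-1)$. I want to extend $\varphi$ to $X(\sigma_0,r_0)$. The key structural input is Claim~\ref{loc-claim}(5) together with a counting identity: in $T_{d,k}$, because the complex is a tree, each $d$-face in $\partial T^d_{d,k}(r)$ is built by attaching a single new vertex to a $(d-1)$-face of $T_{d,k}(r-1)$, so $|\partial T^0_{d,k}(r)|=|\partial T^d_{d,k}(r)|$ exactly, and moreover the number of new $d$-faces is determined by the third (regularity) condition — each degree-$k$ $(d-1)$-face at distance exactly $r_0-1$ spawns $k-1$ new $d$-faces, each $(d-1)$-face created at distance $r_0-1$ (other than via its ``parent'') contributes, and one tracks this recursively. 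The second condition forces $|\partial X^d(\sigma_0,r_0)|=|\partial T^d_{d,k}(r_0)|$, and then the inequality $|\partial X^0(\sigma_0,r_0)|\le|\partial X^d(\sigma_0,r_0)|$ from Claim~\ref{loc-claim}(5) combined with the first condition ($|X^0(\sigma_0,r_0)|=|T^0_{d,k}(r_0)|$, hence $|\partial X^0(\sigma_0,r_0)|=|\partial T^0_{d,k}(r_0)|=|\partial T^d_{d,k}(r_0)|=|\partial X^d(\sigma_0,r_0)|$) forces equality. By the ``furthermore'' clause of Claim~\ref{loc-claim}(5), every $d$-face in $\partial X^d(\sigma_0,r_0)$ then contains exactly one vertex of $\partial X^0(\sigma_0,r_0)$. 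This is precisely the tree-like attachment pattern: no two new $d$-faces share a new vertex, and no new $d$-face has two of its $(d-1)$-faces both new. Hence the new layer of $X$ is obtained from $X(\sigma_0,r_0-1)$ by the same vertex-by-vertex attachment procedure that builds $T_{d,k}(r_0)$ from $T_{d,k}(r_0-1)$, and using the regularity condition to match the number of $d$-faces attached at each $(d-1)$-face of $\partial X^{d-1}(\sigma_0,r_0-1)$, one extends $\varphi$ over the new vertices; Lemma~\ref{iso-lemma} (checking the condition on maximal faces, i.e.\ $d$-faces, in both directions) then certifies that the extension is a simplicial isomorphism $X(\sigma_0,r_0)\xrightarrow{\ \sim\ }T_{d,k}(r_0)$.

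The main obstacle is the bookkeeping in the inductive step: one must verify that the counting conditions genuinely pin down the combinatorial type of the $r_0$-th layer and not merely its face-counts. The delicate points are (i) showing that equality $|\partial X^0|=|\partial X^d|$ rules out ``collisions'' — a new $d$-face touching $X(\sigma_0,r_0-1)$ in more than one $(d-1)$-face, or two new $d$-faces sharing a new vertex — which is exactly what Claim~\ref{loc-claim}(5) delivers; and (ii) checking that the regularity condition distributes the $|\partial X^d(\sigma_0,r_0)|$ new $d$-faces across the boundary $(d-1)$-faces of $X(\sigma_0,r_0-1)$ in the unique balanced way ($k$ minus the already-present degree at each face), so that the extension of $\varphi$ can be made face-by-face with no obstruction. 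Once the attachment pattern is seen to be forced, building the isomorphism and invoking Lemma~\ref{iso-lemma} is routine.
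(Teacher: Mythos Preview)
Your proposal is correct and follows essentially the same approach as the paper: induction on the radius, using the equality case of Claim~\ref{loc-claim}(5) to force the tree-like attachment pattern from $|\partial X^0(\sigma_0,r)|=|\partial X^d(\sigma_0,r)|$, assumption (3) to distribute exactly $k-1$ new $d$-faces at each boundary $(d-1)$-face, and Lemma~\ref{iso-lemma} to certify the extended bijection is a simplicial isomorphism. The paper in fact proves only the backward direction (stating the forward one is easier and not needed), so your forward-direction sketch via Claim~\ref{loc-claim}(2) goes slightly beyond what is written there.
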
 

\begin{proof} 
	Since proving that $X(\sigma_0,r_0)\cong T_{d,k}(r_0)$ implies the above conditions is more easily shown, and we only use the other direction, we restrict ourselves to proving that the conditions imply the isomorphism of the complexes. 
	
	We wish to find a bijective simplicial map $\hat{f}_{r_0}:X(\sigma_0,r_0)\rightarrow T_{d,k}(r_0)$ and since both $X(\sigma_0,r_0)$ and $T_{d,k}(r_0)$ are finite $d$-dimensional simplicial complexes, by Lemma \ref{iso-lemma}, it is enough to construct a bijective map $f_{r_0}:X^0(\sigma_0,r_0) \rightarrow T^0_{d,k}(r_0)$ such that $f_{r_0}$ and $ f^{-1}_{r_0}$ preserve maximal faces. We construct such a sequence of maps by induction on $r$ from $0$ to $r_0$. For $r=0$, since both $X(\sigma_0,0)$ and $T_{d,k}(0)$ are composed of a unique $(d-1)$-face and its subsets, i.e. they are both isomorphic to the complete $(d-1)$-dimensional complex $K_d^{(d-1)}$, by transitivity they are isomorphic. In particular, we can fix an arbitrary choice of a simplicial isomorphism $f_0:X^0(\sigma_0,0)\to T^0_{d,k}(0)$. 
	
	Turning to the induction step, assume that for some $0< r\leq r_0$, there exists a simplicial isomorphism $f_{r-1}:X^0(\sigma_0,r-1)\to T_{d,k}^0(r-1)$. From the definition of $T_{d,k}$, each $d$-face in $\partial T_{d,k}^d(\sigma_0,r)$ is composed of a $(d-1)$-face in $\partial T_{d,k}^{d-1}(r-1)$ and a vertex in $\partial T_{d,k}^0(r)$, using a different vertex for each of the $d$-faces. Since the degree of each $(d-1)$-face in $T_{d,k}$ is $k$, we conclude that $|\partial T_{d,k}^d(r)| = (k-1)|\partial T_{d,k}^{d-1}(r-1)|=|\partial T_{d,k}^0(r)|$. By assumptions (1) and (2) together with the previous equality, we get that $|\partial X^{d}(\sigma_0,r)| = |\partial X^0(\sigma_0,r)|$ and hence, using Claim \ref{loc-claim}(5), that each of the $d$-faces in $\partial X^d(\sigma_0,r)$ is composed of a $(d-1)$-face in $\partial X^{d-1}(\sigma_0,r-1)$ and a vertex in $\partial X^0(\sigma_0,r)$. Consequently, for each vertex $v\in \partial X(\sigma_0,r)$, there exists a unique $(d-1)$-face $\sigma$, in $\partial X^{d-1}(\sigma_0,r-1)$ such that $v\sigma\in\partial X^d(\sigma_0,r)$. Furthermore, from assumption (3), for each $(d-1)$-face $\sigma\in \partial X^{d-1}(\sigma_0,r-1)$, the number of vertices $v\in \partial X^k(\sigma_0,r)$ such that $v\sigma\in \partial X^d(\sigma_0,r)$ is exactly $k-1$ for $r>1$ (and $k$ for $r=1$). For every $v\in \partial T_{d,k}^0(r)$ denote by $\sigma_v$ the unique $(d-1)$-face in $\partial T_{d,k}^{d-1}(r-1)$ such that $v\sigma_v\in \partial T_{d,k}^d(r)$. Combining all of the above we conclude that $f_{r-1}:T^0_{d,k}(r-1)\to X^0(\sigma_0,r-1)$ can be extended to a function $f_r:T^0_{d,k}(r)\to X^0(\sigma_0,r)$ such that 
	\begin{itemize}
		\item $f_r|_{T_{d,k}^0(r-1)}=f_{r-1}$ 
		\item $f_r$ is a bijection. 
		\item For every $v\in \partial T_{d,k}^0(r)$, the vertex $f(v)\in X^0$ is one of the $k-1$ ($k$ if $r=0$) vertices in $\partial X^0(\sigma_0,r)$ which belong to a $d$-face in $\partial X^d(\sigma_0,r)$ together with $f[\sigma_v]$.
	\end{itemize}

From the construction $f_r$ is bijective and maps $d$-faces in $\partial T_{d,k}^d(r)$ to $d$-faces in $\partial X^d(\sigma_0,r)$. Since we also assumed that $f_{r-1}$ is a simplicial isomorphism of $T_{d,k}(r-1)$ and $X(\sigma_0,r-1)$ by Lemma \ref{iso-lemma}, this proves that $f_r$ is a simplicial isomorphism of $T_{d,k}(r)$ and $X(\sigma_0,r)$, as required. 
\end{proof} 

Before turning to the proof of Theorem \ref{thm:main_Thm3}, we state a short claim regarding the number of vertices, $(d-1)$-faces and $d$-faces in $T_{d,k}$ and in each of its layers, as defined by the $r$-neighboring complexes.

\begin{Claim} \label{Ten-Claim2} Let $r>1$, then 
\begin{enumerate}
	\item $|T_{d,k}^0(0)|=d$, $|T_{d,k}^{d-1}(0)|=1$ and $|T_{d,k}^d(0)|=0$. 
	\item $|\partial T_{d,k}^0(1)|=k$, $|\partial T_{d,k}^{d-1}(1)|=dk$ and $|\partial T_{d,k}^d(1)|=k$.
	\item $|T_{d,k}^0(r)|=|T_{d,k}^0(r-1)|+|\partial T_{d,k}^d(r)|$.
	\item $|\partial T_{d,k}^d(r)| = (k-1)|\partial T_{d,k}^{d-1}(r-1)|$.
	\item $|\partial T_{d,k}^{d-1}(r)| = d|\partial T_{d,k}^d(r)|$. 
\end{enumerate}
As a result, for all $r\geq 1$
\[ 
	|\partial T_{d,k}^{d-1}(r)|= k(k-1)^{r-1}d^{r} \qquad,\qquad  \quad |\partial T_{d,k}^d(r)|=k(k-1)^{r-1}d^{r-1}  
\]
and 
\[
|T_{d,k}^0(r)|= d+ k\cdot \frac{\big( d(k-1) \big)^{r}-1}{d(k-1)-1} 
\]

\end{Claim}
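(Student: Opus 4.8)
Since $T_{d,k}$ is transitive on its $(d-1)$-faces (Section~\ref{sec:Results}), I would fix once and for all $\sigma_0\in T_{d,k}^{d-1}$ and abbreviate $T_{d,k}(r):=T_{d,k}(\sigma_0,r)$, then read all five identities off the recursive construction of $T_{d,k}$, re-centered at $\sigma_0$ rather than at the starting $d$-face. Concretely, the first step is to record the ``rooted'' description: $\sigma_0$ is contained in exactly $k$ $d$-faces; each such $d$-face is $\sigma_0$ together with one extra vertex and contributes $d$ new $(d-1)$-faces (its facets other than $\sigma_0$); each of these is in turn contained in exactly $k-1$ further $d$-faces, each obtained by adjoining a brand new vertex; and the construction proceeds in this way by induction. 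The one step that genuinely needs an argument — and the step I expect to be the only real obstacle — is that the line-graph distance $\mathrm{dist}_{G_d(T_{d,k})}(\sigma_0,\cdot)$ refines this picture exactly: a $(d-1)$-face produced at generation $r$ of the construction rooted at $\sigma_0$ lies at distance precisely $r$ from $\sigma_0$, so that $\partial T_{d,k}^{d-1}(r)$, $\partial T_{d,k}^{d}(r)$ and $\partial T_{d,k}^{0}(r)$ are precisely the $(d-1)$-faces, $d$-faces and vertices introduced at generation $r$. This is exactly where the ``arboreal'' (cycle-free) nature of $T_{d,k}$ enters: attaching a new $d$-face along a single $(d-1)$-face via a fresh vertex creates no shortcut between previously existing faces, so distances are determined generation by generation. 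I would prove this by induction on $r$, in the same spirit as the proof of Claim~\ref{Ten-Claim1} and using Claim~\ref{loc-claim}.

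Granting the rooted description, parts (1) and (2) follow by inspecting the $0$- and $1$-neighbourhoods: $T_{d,k}(0)$ is the single $(d-1)$-face $\sigma_0$ with its $d$ vertices and no $d$-face; passing to $T_{d,k}(1)$ adjoins the $k$ $d$-faces through $\sigma_0$, hence $k$ new vertices and $k$ new $d$-faces, and each of these $d$-faces contributes $d$ new $(d-1)$-faces, all distinct since they contain distinct new vertices, giving $|\partial T_{d,k}^{d-1}(1)|=dk$. For $r>1$: every $d$-face of $\partial T_{d,k}^{d}(r)$ is obtained by attaching a fresh vertex to a unique $(d-1)$-face of $\partial T_{d,k}^{d-1}(r-1)$, and each such $(d-1)$-face receives exactly $k-1$ of them (it has degree $k$ in $T_{d,k}$, one of its $d$-faces lying ``below'' it in $\partial T_{d,k}^{d}(r-1)$), which is (4); each $d$-face of $\partial T_{d,k}^{d}(r)$ contributes $d$ new $(d-1)$-faces, all at distance $r$, and each face of $\partial T_{d,k}^{d-1}(r)$ comes from a unique such $d$-face, which is (5); and each $d$-face of $\partial T_{d,k}^{d}(r)$ contributes exactly one new vertex, distinct $d$-faces contributing distinct vertices, so $|\partial T_{d,k}^{0}(r)|\ge|\partial T_{d,k}^{d}(r)|$, while Claim~\ref{loc-claim}(5) gives the reverse inequality, hence equality; combined with $T_{d,k}^{0}(r)=T_{d,k}^{0}(r-1)\,\sqcup\,\partial T_{d,k}^{0}(r)$ this yields (3).

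It remains to solve the recurrences. Combining (2), (4) and (5) gives $|\partial T_{d,k}^{d-1}(r)|=d(k-1)\,|\partial T_{d,k}^{d-1}(r-1)|$ for $r\ge 2$ with $|\partial T_{d,k}^{d-1}(1)|=dk$, whence $|\partial T_{d,k}^{d-1}(r)|=dk\,(d(k-1))^{r-1}$, i.e.\ $|\partial T_{d,k}^{d-1}(r-1)|=k(k-1)^{r-2}d^{r-1}$; plugging this into (4) gives $|\partial T_{d,k}^{d}(r)|=k(k-1)^{r-1}d^{r-1}$ (this also reproduces the value $k$ from (2) at $r=1$). Then (3) telescopes:
\[
	|T_{d,k}^{0}(r)|=|T_{d,k}^{0}(0)|+\sum_{j=1}^{r}|\partial T_{d,k}^{0}(j)|=d+\sum_{j=1}^{r}|\partial T_{d,k}^{d}(j)|=d+k\sum_{j=0}^{r-1}\big(d(k-1)\big)^{j}=d+k\,\frac{(d(k-1))^{r}-1}{d(k-1)-1}\,.
\]
All the sums involved are finite geometric series, so no further input is required; this gives the closed form for $|T_{d,k}^{0}(r)|$ together with the two displayed formulas for $|\partial T_{d,k}^{d-1}(r-1)|$ and $|\partial T_{d,k}^{d}(r)|$.
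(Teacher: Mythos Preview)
Your approach is exactly what the paper intends: it states that the claim ``follows directly from the definition of $T_{d,k}$ and its proof is left to the reader'', and your argument is precisely that direct verification, carried out carefully. The one point you rightly flag as needing an actual argument --- that line-graph distance from $\sigma_0$ agrees with generation number in the rooted construction --- is indeed the substance behind the phrase ``follows from the definition'', and your inductive justification via the arboreal (fresh-vertex) property is the right one.

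One remark: your closed form
\[
|T_{d,k}^{0}(r)|=d+k\,\frac{(d(k-1))^{r}-1}{d(k-1)-1}
\]
does not match the paper's displayed formula $d+\dfrac{d^{r-1}(k-1)^{r-1}-1}{d(k-1)-1}$, and yours is the correct one (check $r=1$: your formula gives $d+k$, the paper's gives $d$). The paper's version is a typo. Likewise, in part~(2) the paper writes $|\partial T_{d,k}^{d-1}(0)|=dk$, which should read $|\partial T_{d,k}^{d-1}(1)|=dk$; you silently use the corrected value, as you should.
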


The claim follows directly from the definition of $T_{d,k}$ and its proof is left to the reader. 

\subsection{Probabilistic estimations for uniform random Steiner complexes}

Claim \ref{Ten-Claim1} can be used to determine whether a sequence of uniform random Steiner complexes converges locally in probability to the arboreal complex $T_{d,k}$. To this end, we wish to accumulate some probabilistic results regarding uniform random Steiner complexes. 

\begin{proposition} \label{Ten-Prop1}
	Let $S$ be an $(n,d)$-Steiner system chosen uniformly at random from the set of all $(n,d)$-Steiner systems. Let $A\subset \binom{[n]}{d+1}$ and $\tau\in\binom{[n]}{d+1}$ such that $\tau\subsetneq A^0:=\bigcup_{\tau'\in A}\tau'$ and $|A^0|\leq \frac{n}{2}$. Then 
\[
	\BP(A\subset S,~\tau\in S)\leq \frac{2}{n}\BP(A\subset S)\,.
\]
\end{proposition}

\begin{proof}
	Denote $\rho=\tau\cap A^0$ and note that by assumption $0\leq |\rho|\leq d$. Since $S$ is a random $(n,d)$-Steiner system sampled uniformly at random for all sufficiently large $n$
	\[
	\begin{aligned}
		\BP(A\subset S) &\geq \BP\bigg(\exists \sigma \in \binom{[n]\setminus A^0}{d+1-|\rho|} ~:~ A\cup \{\rho\sigma\}\subset S\bigg)\\ &= \sum_{\sigma\in \binom{[n]\setminus A^0}{d+1-|\rho|}}\BP(A\subset S,~\rho\sigma \in S) = \binom {n-|A^0|}{d+1-|\rho|}P(A\subset S,~\tau\in S)\,.
	\end{aligned}
	\]
Using the fact that $|\rho|\leq d$ and $|A^0|\leq \frac{n}{2}$, gives $\binom {n-|A^0|}{d+1-|\rho|}\geq n-|A^0|\geq \frac{n}{2}$, thus concluding the proof. 
\end{proof}

\begin{proposition}\label{Ten-Prop3}
	Let $S$ be an $(n,d)$-Steiner system chosen uniformly at random from the set of all $(n,d)$-Steiner systems. Let $A\subset \binom{[n]}{d+1}$ be a family of $d$-cells such that $|A^0|\leq \frac{n}{2}-2d-1$, where $A^0=\bigcup_{\tau\in A}\tau$. Furthermore, let $\sigma,\sigma'\in\binom{[n]}{d}$ be two distinct $(d-1)$-cells and  $v\in [n]\setminus (A^0\cup \sigma\cup\sigma')$ a vertex. Then, there exists $C_d\in (0,\infty)$ such that 
\[
	\BP(A\cup \{v\sigma,v\sigma'\}\subset S)\leq \frac{C_{d}}{n^2}\BP(A\subset S)\,.
\]
\end{proposition}

\begin{proof}
	Denote $\theta= \sigma \cap A^0$, $\theta'=\sigma'\cap A^0$, $\rho=\sigma\setminus A^0$ and $\rho'=\sigma'\setminus A^0$. Throughout the proof we will consider permutation on the vertices which fix $A^0$ and so $\theta$ and $\theta'$ remain fixed while $\rho$ and $\rho'$ varies. Let $j=|\rho\cap\rho'|$, which by assumption satisfies $0\leq j< d$. Due to the symmetry of the model under permutation of the vertex set and the fact that the number of triplets $(\rho_1,\rho_2,u)\in \binom{[n]\setminus A^0}{|\rho|}\times \binom{[n]\setminus A^0}{|\rho'|}\times [n]\setminus A^0$ so that $|\rho_1\cap\rho_2|=j$ and $u\in [n]\setminus (A^0 \cup \rho_1\cup\rho_2)$ is 
\[
M_{n}\equiv M_n(A^0,\rho,\rho'):=\binom{n-|A^0|}{|\rho|}\binom{|\rho|}{j}\binom{n-|A^0|-|\rho|}{|\rho'|-j}(n-|A^0|-|\rho|-|\rho'|+j).
\]
The invariance of the law of $S$ under permutation on the vertices implies that 
\[
\begin{aligned}
	\BP(A\cup \{v\sigma,v\sigma'\}\subset S) 
	&= \frac{1}{M_n}\sum_{\substack{\rho_1\in \binom{[n]\setminus A^0}{|\rho|},\rho_2\in \binom{[n]\setminus A^0}{|\rho'|},\,|\rho_1\cap\rho_2|=j\\u\in [n]\setminus (A^0\cup \rho_1\cup\rho_2)}} 	\BP(A\cup \{\theta u\rho_1,\theta' u\rho_2\}\subset S)\\
&= \frac{1}{M_n}\BE\bigg[\sum_{\substack{\rho_1\in \binom{[n]\setminus A^0}{|\rho|},\rho_2\in \binom{[n]\setminus A^0}{|\rho'|},\,|\rho_1\cap\rho_2|=j\\u\in [n]\setminus (A^0\cup \rho_1\cup\rho_2)}} 	\ind_{(A\cup \{\theta u\rho_1,\theta' u\rho_2\}\subset S}\bigg]\,.
\end{aligned}
\]
Rewriting the sum over $\rho_1,\rho_2$ and $u$ as the number of ways to sample a $j$-cell $\eta$ (that includes the vertex $u$) and two $d$-faces whose intersection is $\eta$, we obtain
\[
\begin{aligned}
	&\BP(A\cup \{v\sigma,v\sigma'\}\subset S) \\
	= &\frac{j+1}{M_n}\BE\bigg[\sum_{\eta\in \binom{[n]\setminus A^0}{j+1}}\sum_{\varrho_1\in\binom{[n]\setminus (A^0\cup \eta)}{|\rho|-j}}\sum_{\varrho_2\in\binom{[n]\setminus (A^0\cup \eta\cup \varrho_1)}{|\rho'|-j}}\ind_{A\cup \{\theta\eta\varrho_1,\theta'\eta\varrho_2\}\subset S}\bigg]\\
	= &\frac{j+1}{M_n}\BE\bigg[\ind_{A\subset S}\sum_{\eta\in \binom{[n]\setminus A^0}{j+1}}\bigg(\sum_{\varrho_1\in\binom{[n]\setminus (A^0\cup \eta)}{|\rho|-j}}\ind_{\theta \eta \varrho_1\in S}\cdot \bigg(\sum_{\varrho_2\in\binom{[n]\setminus (A^0\cup \eta\cup \varrho_1)}{|\rho'|-j}}\ind_{\theta'\eta\varrho_2\in S}\bigg)\bigg)\bigg]\,.\\	
\end{aligned}
\]
Since the number of $(d-1)$-faces in the complex $K_m^{(d-1)}$ containing a fixed cell $\hat{\sigma}$ is $\frac{1}{d+1-|\hat{\sigma}|}\binom{m-|\hat{\sigma}|}{d-|\hat{\sigma}|}$, see the discussion leading the the notion of $d$-admissible numbers, by taking $\hat{\sigma}=\theta'\eta$ and $m=n-|A^0|$ we obtain
\[
	\sum_{\varrho_2\in\binom{[n]\setminus (A^0\cup \eta\cup \varrho_1)}{|\rho'|-j}}\ind_{\theta'\eta\varrho_2\in S}\leq \sum_{\varrho_2\in\binom{[n]\setminus A^0}{|\rho'|-j}}\ind_{\theta'\eta\varrho_2\in S}\leq \frac{1}{d-j-|\theta'|}\binom{n-|A^0|-j-1-|\theta'|}{d-j-1-|\theta'|}
\]
using the last bound together with the bound on the number of $(d-1)$-cells containing $\hat\sigma=\theta\eta$ and $m=n-|A^0|$ gives
\[
\begin{aligned}
	&\sum_{\varrho_1\in\binom{[n]\setminus (A^0\cup \eta)}{|\rho|-j}}\ind_{\theta \eta \varrho_1\in S}\cdot \bigg(\sum_{\varrho_2\in\binom{[n]\setminus (A^0\cup \eta\cup \varrho_1)}{|\rho'|-j}}\ind_{\theta'\eta\varrho_2\in S}\bigg)\\
	\leq & \sum_{\varrho_1\in\binom{[n]\setminus (A^0\cup \eta)}{|\rho|-j}}\ind_{\theta \eta \varrho_1\in S}\cdot \frac{1}{d-j-|\theta'|}\binom{n-|A^0|-j-1-|\theta'|}{d-j-1-|\theta'|}\\
	\leq & \frac{1}{d-j-|\theta|}\binom{n-|A^0|-j-1-|\theta|}{d-j-1-|\theta|}\cdot \frac{1}{d-j-|\theta'|}\binom{n-|A^0|-j-1-|\theta'|}{d-j-1-|\theta'|}\\
	\leq & \binom{n-|A^0|}{d-j-1-|\theta|}\binom{n-|A^0|}{d-j-1-|\theta'|}\,,	
\end{aligned}
\]
where in the last step we used the fact that $\sigma$ and $\sigma'$ are distinct and thus $|\theta\eta|=|\eta|+j+1\leq d$ and $|\theta\eta'|\leq |\theta|+j+1\leq d$.

Summing over the choices for $\eta$ and using $\BE[\ind_{A\subset S}]=\BP(A\subset S)$, gives  
\[
\begin{aligned}
		\BP(A\cup \{v\sigma,v\sigma'\}\subset S)&
	\leq \frac{j+1}{M_n}\binom{n-|A^0|}{d-j-1-|\theta|}\binom{n-|A^0|}{d-j-1-|\theta'|}\binom{n-|A^0|}{j+1}\BP(A\subset S)\\
	&\leq \frac{j+1}{M_n}(n-|A^0|)^{2d-j-1-|\theta|-|\theta'|}\BP(A\subset S)\,.
\end{aligned}
\]
Using the explicit expression for $M_n$, the bounds $\binom{m}{l}\geq \frac{m^l}{l!}$ and $|\rho|,|\rho'|,j\leq d$, and the assumption $n-|A^0|\geq 2d+1$, we conclude that there exists $C_{d,j}\in (0,\infty)$ such that
\[
	M_n\geq \frac{1}{C_{d,j}}(n-|A^0|)^{|\rho|+|\rho'|-j+1}\,.
\]
Combining both estimations and using the fact that $|\rho|+|\theta|=|\rho'|+|\theta'|=d$ and $n-|A^0|\geq \frac{n}{2}$, we conclude that 
\[
	\BP(A\cup \{v\sigma,v\sigma'\}\subset S)\leq \frac{4(j+1)C_{d,j}}{n^2}\BP(A\subset S)\,.
\]
Defining $C_d=4\max\{(j+1)C_{d,j} ~:~ 0\leq j<d\}$ the result follows. 
\end{proof}


\subsection{Proof of Theorem \ref{thm:main_Thm3}}

Using the estimations from the previous subsection we turn to the proof of Theorem \ref{thm:main_Thm3}. The main step in the proof is stated next. 

\begin{Theorem}[Restatement of Theorem \ref{thm:main_Thm3}]\label{Ten-Thm3}
  	Let $X$ be a random $(d,k,n)$-Steiner complexes and $\sigma\in X^{d-1}$. Then for every $r\ge 0$, there exists $C_{r,d,k}\in (0,\infty)$ such that 
\[ 
	\BP\big(X(\sigma,r) \cong T_{d,k}(r)\big) \geq 1-\frac{C_{r,d,k}}{n}\,.
\]
\end{Theorem}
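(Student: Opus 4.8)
The plan is to prove the statement by induction on $r$, using Claim \ref{Ten-Claim1} to reduce the isomorphism $X(\sigma,r)\cong T_{d,k}(r)$ to three local combinatorial conditions: that the vertex counts $|X^0(\sigma,s)|$ match those of $T_{d,k}$ for all $s\le r$, that the $d$-face counts $|\partial X^d(\sigma,s)|$ match for all $s\le r$, and that every $(d-1)$-face within distance $r-1$ of $\sigma$ has full degree $k$. Since by Claim \ref{Ten-Claim2} the numbers $|T_{d,k}^0(s)|$ and $|\partial T_{d,k}^d(s)|$ are explicit, and since the recursions there show these counts are determined once one knows (a) every relevant $(d-1)$-face has degree exactly $k$ and (b) no ``collisions'' occur — i.e. the $k-1$ new $d$-faces attached at each $(d-1)$-face at distance $s-1$ all use fresh vertices, distinct from each other and from all previously seen vertices, and moreover each such $d$-face meets the already-constructed complex in exactly one $(d-1)$-face — it suffices to bound the probability of the ``bad'' event that, within the first $r$ layers, either some $(d-1)$-face fails to have degree $k$ or some collision occurs. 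Call this event $B_r$; the goal is $\BP(B_r)\le C_{r,d,k}/n$.

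The induction step works on the level of exposing the Steiner systems $S_1,\dots,S_k$ layer by layer. Assume $X(\sigma,r-1)\cong T_{d,k}(r-1)$, so in particular $X(\sigma,r-1)$ has the ``tree-like'' structure with $|\partial X^{d-1}(\sigma,r-1)| = k(k-1)^{r-2}d^{r-1}$ boundary $(d-1)$-faces (a number depending only on $r,d,k$, not on $n$). For each such boundary $(d-1)$-face $\eta$, and for each $j\in\{1,\dots,k\}$, there is (at most) one $d$-face of $S_j$ containing $\eta$; the potential failures at layer $r$ are: (i) some $S_j$ has no $d$-face through $\eta$ — this has probability $\to 0$ but is not obviously $O(1/n)$ for a single system, so here I would instead invoke condition (2) on the admissible distributions (the probability that $A$ is a genuine Steiner system is $1-o(1)$); actually, for the uniform-Steiner-system model, each $S_j$ is always a genuine Steiner system, so (i) never happens and one only loses $O(1/n)$-type terms from Keevash's construction via hypothesis (2) — I would state the argument for the uniform model and note the general case follows verbatim. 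The substantive failures are collisions: (ii) the new vertex $v$ completing $\eta$ to a $d$-face $v\eta\in S_j$ already lies in $X^0(\sigma,r-1)$, or (iii) two distinct boundary faces $\eta,\eta'$ (possibly in different systems $S_j,S_{j'}$) get completed by the same new vertex $v$, or (iv) the $d$-face $v\eta$ contains more than one vertex outside $X^0(\sigma,r-1)$ in an inconsistent way — but since $v\eta$ has exactly $d+1$ vertices, $d$ of which form $\eta\subset X^0(\sigma,r-1)$, case (iv) cannot occur once $v\notin X^0(\sigma,r-1)$; and (v) within a single $S_j$, two boundary faces sharing... these are all events of the form ``$v\tau,v\tau'\in S_j$'' for fixed small faces $\tau,\tau'$ or ``$v\tau\in S_j$ with $v$ in a fixed bounded set,'' whose probabilities are bounded by $C_d/n^2$ (Proposition \ref{Ten-Prop3}) or $C_d/n$ (Proposition \ref{Ten-Prop1}) respectively.

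Putting it together: conditionally on $X(\sigma,r-1)\cong T_{d,k}(r-1)$, the bad event at layer $r$ is a union over a \emph{bounded} number (depending only on $r,d,k$) of boundary faces $\eta$, bounded number of systems $j$, and bounded number of ``old'' vertices $v_0\in X^0(\sigma,r-1)$, of events each of probability $\le C_d/n$ (for a new vertex coinciding with a fixed old one, via Proposition \ref{Ten-Prop1}) or $\le C_d/n^2$ (for two new completions coinciding, via Proposition \ref{Ten-Prop3}, summed over the $O(n)$ choices of the shared new vertex, giving $O(1/n)$). Hence, by a union bound, $\BP(B_r \mid X(\sigma,r-1)\cong T_{d,k}(r-1)) \le C_{r,d,k}/n$, and combining with the inductive hypothesis $\BP(X(\sigma,r-1)\not\cong T_{d,k}(r-1))\le C_{r-1,d,k}/n$ gives $\BP(X(\sigma,r)\not\cong T_{d,k}(r))\le C_{r,d,k}/n$ after adjusting the constant. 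The main obstacle I anticipate is bookkeeping: carefully enumerating \emph{all} the ways the tree structure at layer $r$ can fail and checking each reduces to an instance of Proposition \ref{Ten-Prop1} or \ref{Ten-Prop3} — in particular handling the subtle point that collisions \emph{between two different Steiner systems} $S_j,S_{j'}$ require independence (so the two-system collision probability is $\le (C_d/n)^2$, even better), whereas collisions \emph{within} one $S_j$ need Proposition \ref{Ten-Prop3}; and the even subtler point of ruling out that a new $d$-face of $S_j$ accidentally glues two distinct existing $(d-1)$-faces of $X(\sigma,r-1)$ together (creating a short cycle in $G_d(X)$), which again is an event of the type ``$v\tau,v\tau'\in S_j$'' and is controlled by Proposition \ref{Ten-Prop3}. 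Once the casework is organized, each case is a one-line application of the two propositions and a union bound over an $n$-independent index set.
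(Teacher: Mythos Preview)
Your proposal is correct and follows essentially the same approach as the paper: induction on $r$, invoking Claim~\ref{Ten-Claim1} to reduce to degree and collision conditions, conditioning on the (tree-like) realization of $X(\sigma,r-1)$, and bounding each bad event at the next layer via a union bound over an $n$-independent index set using Propositions~\ref{Ten-Prop1} and~\ref{Ten-Prop3}, with independence handling the between-systems case.

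One small correction to your bookkeeping: your item (i) misidentifies the source of degree failure. In the uniform Steiner model each $S_j$ is a genuine Steiner system, so every $(d-1)$-face $\eta$ is covered by exactly one $d$-face of $S_j$; the way $\deg_X(\eta)<k$ actually occurs is that two systems $S_j,S_{j'}$ assign the \emph{same} $d$-face $v\eta$ to $\eta$. This is not a subcase of your (ii) or (iii) as written (it involves a single boundary face and a possibly new vertex), but it is trivially bounded by independence and Proposition~\ref{Ten-Prop1}: $\BP(\exists\, j<j',\,\exists\, v:\ v\eta\in S_j\cap S_{j'})\le \binom{k}{2}\sum_{v}\BP(v\eta\in S_1)^2\le \binom{k}{2}(n-d)(C_d/n)^2=O(1/n)$. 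With this event added to your list, your case analysis is complete and matches the paper's three bad events $F$ (degree drop), $G$ (new $d$-face reuses an old vertex), and $H$ (two new $d$-faces share a fresh vertex).
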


\begin{proof} 
	We prove the statement by induction on $r$. Denote by $S_1,S_2,\ldots,S_k$ the $k$ independent $(n,d)$-Steiner systems used to define $X$. We use the abbreviations
\[
	E_{r} = \{X(\sigma,r) \cong T_{d,k}(r)\}\,.
\]

\paragraph{The case $r=1$.} Since $X(\sigma,0)$ is composed of $\sigma$ and its subsets, by Claim \ref{Ten-Claim1}
\[ 
	E_{1}^c= \{\deg(\sigma)<k \}\,.   
\]
Since $X$ is composed of $k$ independent $(n,d)$-Steiner systems, each of which contains a unique $d$-face that contain $\sigma$, it follows that degree of $\sigma$ is $k$ if and only if those $d$-faces are distinct. Hence
\[ 
	E_{1}^c=\{ \deg(\sigma)<k\} = \bigcup_{1\leq \ell<m\leq k}  \bigcup_{\sigma\subset \tau \in \binom{[n]}{d+1}} \{ \tau \in S_\ell\cap S_m \}\,. 
\]

Using a union bound and the fact that the processes are i.i.d. gives
\begin{equation*}
\begin{aligned}
	\BP(E_{1}^c) &\leq \sum_{1\leq \ell <m \leq k}\sum_{\sigma\subset\tau \in \binom{[n]}{d+1}} \BP(\tau \in S_\ell\cap S_m)\\
	&=\sum_{1\leq \ell <m \leq k}\sum_{\sigma\subset\tau \in \binom{[n]}{d+1}} \BP(\tau \in S_\ell)\cdot \BP(\tau \in S_m)\\
	&= \binom{k}{2} \sum_{\sigma\subset\tau \in \binom{[n]}{d+1}} \BP(\tau \in S_1)^2\,.
\end{aligned}
\end{equation*}
By Proposition \ref{Ten-Prop1} (applied with $A=\emptyset$)
\[ 
	\BP(\tau \in S_1)\leq \frac{C_d}{n}\,, 
\]
and therefore
\[
	\BP(E_{1}^c) \leq \binom{k}{2}\sum_{\sigma\subset\tau \in \binom{[n]}{d+1}} \frac{C_d}{n}\cdot \BP(\tau \in S_1) = \binom{k}{2}\frac{C_d}{n}=\frac{C_{d,k}}{n}\,,
\]
where in the equality before last we used the fact that the events $\{\tau\in S_1\}$ for $\sigma\subset\tau \in \binom{[n]}{d+1}$ are disjoint and their union has probability $1$. This completes the proof for $r=1$. 

\paragraph{General $r$.} We proceed by induction over $r$. Assume that for some $r\geq 1$, there exists $C_{r,d,k}\in (0,\infty)$ such that $\BP(E_{r}^c)\leq C_{r,d,k}n^{-1}$ and let us turn to prove the result for $r+1$. 

By Claim \ref{Ten-Claim1}, for every $r\geq 1$
\begin{equation}\label{eq:r_to_r_plus_one}
\begin{aligned}
	E_{r+1}= E_{r}&\cap \big\{|X^0(\sigma,r+1)|=|T_{d,k}^0(r+1)|\big\}\\ &\cap \big\{|\partial X^d(\sigma,r+1)|=|\partial T_{d,k}^d (r+1)|\big\}\cap \big\{ \deg(\sigma')=k~~ \forall \sigma'\in \partial X^{d-1}(\sigma,r)\big\}\,,
\end{aligned}
\end{equation}
and by the induction assumption 
\[ 
	\BP(E_{r+1}^c) = \BP(E_{r}\cap E_{r+1}^c)+\BP(E_{r}^c\cap E_{r+1}^{c}) 
	 \leq \BP(E_{r}\cap E_{r+1}^c)+\BP(E_{r}^c)\leq \BP(E_{r}\cap E_{r+1}^c)+\frac{C_{r,d,k}}{n}\,.  
\]
Hence, it suffices to estimate the probability of the event $E_{r}\cap E_{r+1}^c$. 

Let $\mathcal{Y}_r$ be the set of all rooted $d$-complexes on the vertex set $[n]$ isomorphic to $T_{d,k}(r)$ whose root is $\sigma$. Then by the law of total probability    
\begin{equation}\label{eq:prob_part1}
	\BP(E_{r}\cap E_{r+1}^c) = \sum_{Y\in \CY_r}\BP(E_{r+1}^c|X(\sigma,r)=Y)\cdot \BP(X(\sigma,r)=Y)\,.
\end{equation}

If $Y\in \CY_r$, then we can use any of the simplicial isomorphism between $Y$ and $T_{d,k}(r)$ to identify the set of $(d-1)$-faces in $Y$ corresponding to $\partial T^{d-1}_{d,k}(r)$, which we denote by $\partial Y$. Note that by \eqref{eq:r_to_r_plus_one} and Claim \ref{Ten-Claim1}, conditioned on the event $X(\sigma,r)=Y$, if the event $E_{r+1}^c$ does not hold, then one of the following must happen:
\begin{enumerate}
	\item There exists $\sigma'\in \partial Y$ such that $\deg(\sigma')<k$. 
	\item There exist $\sigma'\in\partial Y$ and $v\in Y^0\setminus{\sigma'}$ such that $v\sigma'\in \partial X^d(\sigma,r+1)$. 
	\item There exist $\sigma',\sigma''\in \partial Y$ distinct and $v\in [n]\setminus Y^0$ such that $v\sigma',v\sigma''\in \partial X^d(\sigma,r+1)$.
\end{enumerate}
Indeed, if none of the above conditions hold, then the degree of each of the $(d-1)$-faces in $\partial Y$ is $k$, and each of them must be connected to $k$-faces generated by new and distinct vertices. Hence, by Claim \ref{Ten-Claim1}, the event $E_{r+1}$ holds. 

Denoting by $F_{r+1,Y}(\sigma')$, $G_{r+1,Y}(\sigma',v)$ and $H_{r+1,Y}(\sigma',\sigma'',v)$ the three events above respectively, we conclude via a union bound that 
\begin{equation}\label{eq:blablabla}
\begin{aligned}
	&\BP(E_{r+1}^c|X(\sigma,r)=Y) \\
	\leq &\sum_{\sigma'\in \partial Y}\BP(F_{r+1,Y}(\sigma')|X(\sigma,r)=Y)+\sum_{\substack{\sigma'\in \partial Y \\ v\in Y^0\setminus \sigma'}}\BP(G_{r+1,Y}(\sigma',v)|X(\sigma,r)=Y)\\
	+&\hspace{-0.5cm}\sum_{\substack{\sigma',\sigma''\in \partial Y,\, \sigma'\neq \sigma'' \\ v\in [n]\setminus Y^0}}\BP(H_{r+1,Y}(\sigma',\sigma'',v)|X(\sigma,r)=Y)\,.
\end{aligned}
\end{equation}

We turn to estimate each of the sums separately. For the first sum, note that by repeating the argument we used for $r=1$ with $\sigma$ replaced by $\sigma'$ and $A=Y$, we conclude that 
\[
\begin{aligned}
	&\sum_{\sigma'\in \partial Y}\BP(F_{r+1,Y}(\sigma')|X(\sigma,r)=Y) \\ &\qquad \leq |\partial Y|\cdot \binom{k}{2}\frac{C_d}{n} = |\partial T_{d,k}^{d-1}(r)| \binom{k}{2}\frac{C_d}{n} = k(k-1)^{r-1}d^{r}\binom{k}{2}\frac{C_d}{n}\leq \frac{C_{r,d,k}}{n}\,,
\end{aligned}
\]
where in the last equality we used Claim \ref{Ten-Claim2}.

Turning to the second sum, note that by Proposition \ref{Ten-Prop1}, applied with $A=Y^d$, for every $\sigma'\in \partial Y$ and $v\in Y^0\setminus \sigma'$
\[	
	\BP(G_{r+1,Y}(\sigma',v)|X(\sigma,r)=Y) \leq \sum_{m=1}^k \BP(v\sigma'\in S_m) \leq \frac{k\cdot C_d}{n}=\frac{C_{d,k}}{n}\,,
\]
and thus 
\[
	\sum_{\substack{\sigma'\in \partial Y \\ v\in Y^0\setminus \sigma'}}\BP(G_{r+1,Y}(\sigma',v)|X(\sigma,r)=Y)
	\leq |\partial Y|(|Y^0|-d)\frac{C_{d,k}}{n} =|\partial T_{d,k}^{d-1}(r)|\cdot (|T_{d,k}^0(r)|-d)\frac{C_{d,k}}{n}\leq \frac{C_{r,d,k}}{n}\,,
\]
where in the last step we used Claim \ref{Ten-Claim2}.

Finally, turning to estimate the third sum, note that 
\[
	\sum_{\substack{\sigma',\sigma''\in \partial Y \\ v\in [n]\setminus Y^0}}\BP(H_{r+1,Y}(\sigma',\sigma'',v)|X(\sigma,r)=Y) \leq \sum_{\substack{\sigma',\sigma''\in \partial Y \\ v\in [n]\setminus Y^0}}\sum_{1\leq \ell,m\leq k}\BP(v\sigma'\in S_\ell,\, v\sigma''\in S_m|X(\sigma,r)=Y)\,.
\]
We split the sum over $\ell$ and $m$ into the cases $\ell=m$ and $\ell\neq m$. Starting with the former, i.e., with the sum
\begin{equation}\label{eq:blabla}
	\sum_{\substack{\sigma',\sigma''\in \partial Y,\, \sigma'\neq \sigma'' \\ v\in [n]\setminus Y^0}}\sum_{1\leq \ell\leq k}\BP(v\sigma'\in S_\ell,\, v\sigma''\in S_\ell|X(\sigma,r)=Y)\,,
\end{equation}
by Proposition \ref{Ten-Prop3},applied with $A=Y^d$, we have the bound
\[
	\BP(v\sigma'\in S_\ell,\, v\sigma''\in S_\ell|X(\sigma,r)=Y)\leq \frac{C_{d,k}}{n^2}\,.
\]
Since the sum in \eqref{eq:blabla} over $\sigma',\sigma''$ contains at most $|\partial Y|^2$ terms and the sum over $v$ contains at most $n$ terms, we conclude that there exists $C_{r,d,k}\in (0,\infty)$ such that 
\[
	\sum_{\substack{\sigma',\sigma''\in \partial Y,\, \sigma'\neq \sigma'' \\ v\in [n]\setminus Y^0}}\sum_{1\leq \ell\leq k}\BP(v\sigma'\in S_\ell,\, v\sigma''\in S_\ell|X(\sigma,r)=Y)\leq \frac{C_{r,d,k}}{n}\,.
\]
Turning to the latter, i.e., to the sum 
\[
		\sum_{\substack{\sigma',\sigma''\in \partial Y \\ v\in [n]\setminus Y^0}}\sum_{1\leq \ell\neq m\leq k}\BP(v\sigma'\in S_\ell,\, v\sigma''\in S_m|X(\sigma,r)=Y)\,,
\]
we note that by Proposition \ref{Ten-Prop1} for every choice of $\si',\sigma'',\ell$ and $m$ as above
\[
\begin{aligned}
	&\BP(v\sigma'\in S_\ell,\, v\sigma''\in S_m|X(\sigma,r)=Y) \\
	= &\frac{1}{P(X(\sigma,r)=Y)}\sum_{A_1\uplus A_2\uplus\ldots A_k=Y^d} \BP(v\sigma'\in S_\ell,\, v\sigma''\in S_m,~A_i\subset S_i~\forall 1\leq i\leq k)\\
	= &\frac{1}{P(X(\sigma,r)=Y)}\sum_{A_1\uplus A_2\uplus\ldots A_k=Y^d} \BP(A_\ell\cup\{v\sigma'\}\in S_\ell)\BP(A_m\cup\{v\sigma''\}\subset S_m)\prod_{\substack{1\leq i\leq k\\ i\neq \ell,m}}\BP(A_i\subset S_i)\\
	\leq & \frac{1}{P(X(\sigma,r)=Y)}\sum_{A_1\uplus A_2\uplus\ldots A_k=Y^d} \frac{C_d}{n^2}\prod_{1\leq i\leq k}\BP(A_i\subset S_i) = \frac{C_d}{n^2}\,,
\end{aligned}
\]
and thus by repeating the counting argument in the former case we get
\[
	\sum_{\substack{\sigma',\sigma''\in \partial Y \\ v\in [n]\setminus Y^0}}\BP(H_{r+1,Y}(\sigma',\sigma'',v)|X(\sigma,r)=Y) \leq \frac{C_{r,d,k}}{n}\,.
\]

Combining the estimation for the three sums in \eqref{eq:blablabla}, we conclude that 
\[
	\BP(E_{r+1}^c|X(\sigma,r)=Y) \leq \frac{C_{r+1,d,k}}{n}\,,
\]
which together with \eqref{eq:prob_part1} gives 
\[
	\BP(E_{r}\cap E_{r+1}^c)\leq \frac{C_{r+1,d,k}}{n}\sum_{Y\in \CY_r}\BP(X(\sigma,r)=Y)\leq \frac{C_{r+1,d,k}}{n}\,,
\]
thus completing the proof. 
\end{proof}

Using Theorem \ref{Ten-Thm3} we immediately obtain the following corollary

\begin{Cor}\label{cor:local_convergence}
	Let $(X_i)$ be a sequence of random $(d,k,n_i)$-Steiner complexes with $(n_i)$ a sequence of $d$-admissible numbers such that $\lim_{i\to\infty}n_i=\infty$. Then $X_i$ converges locally in probability to $(T_{d,k},\sigma_0)$ for every choice of $\sigma_0\in T_{d,k}^{d-1}$. 
\end{Cor}

\begin{proof}
	Denote by $\sigma_i$ a random element in $X_i^{d-1}$ sampled uniformly at random. Then, by Theorem \ref{Ten-Thm3}, for every $r\geq 0$
	\[
		\BP(X_i(\sigma_i,r)\cong T_{d,k}(r)) = \frac{1}{|X_i^{d-1}|}\sum_{\sigma\in X^{d-1}_i} \BP(X_i(\sigma,r)\cong T_{d,k}(r)) \geq 1-\frac{C_{r,d,k}}{n_i}\,,
	\]
and thus $\lim_{i\to\infty}\BP(X_i(\sigma_i,r)\cong T_{d,k}(r))=1$.
\end{proof}


\section{Weak convergence of the empirical spectral distributions}

The goal of this section is to prove Theorem \ref{thm:main_Thm2}. We start by recalling the definition of weak convergence in probability and state a sufficient and simpler condition for proving it in our setting. Let $(\mu_n)$ be a sequence of random Borel probability measures, recall that $\mu_n$ are said to \emph{converges weakly in probability} to a Borel probability measure $\mu$ on $\BR$ if
\[  
	\lim_{n\to\infty} \BP\big(|\langle \mu_n,f\rangle-\langle \mu,f\rangle| >\varepsilon \big) = 0\,, 
\]
for every continuous and bounded function $f:\BR\rightarrow \BR$ and every $\epsilon>0$, where for every probability measure $\nu$ and every $\nu$-integrable function $f:\mathbb{R}\rightarrow \mathbb{R}$, we abbreviate $\langle \nu,f \rangle :=\int_\mathbb{R} f(x)d\mu(x)$. 

Note that when $\mu_n$ is the empirical spectral distribution of a random self-adjoint operator $A$ with eigenvalues $\lambda_1\leq...\leq \lambda _n$, the term $\langle \mu_n, f\rangle$ is simply the random variable $\langle \mu_n, f\rangle= \frac{1}{n} \sum_{j=1}^n f(\lambda_j)$. In particular, the moments of $\mu_n$ are given by 
\begin{equation} \label{eq:weak_convergence_1}
	\langle \mu_n, x^\ell\rangle= \frac{1}{n} \sum_{j=1}^n \lambda_j^\ell= \frac{1}{n} \mathrm{tr}(A^\ell),\qquad \forall \ell\geq 0\,.
\end{equation}

We have the following useful result providing a sufficient condition for weak convergence in probability. 
\begin{proposition}[\cite{AGZ10} (2.1.8)]\label{wkconv-prop1}
	Let $(\mu_n)$ be a sequence of random Borel probability measures on $\BR$ and $\mu$ a Borel probability measure on $\BR$ such that $\langle \mu_n,x^\ell \rangle \overset{n\rightarrow \infty}{\longrightarrow} \langle \mu,x^\ell \rangle$ in probability for all $\ell \geq 0$. If there exists $K>0$ such that $\mu([-K,K])=1$, then $\mu_n$ converges weakly in probability to $\mu$. 
\end{proposition}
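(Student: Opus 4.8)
The statement to prove is Proposition~\ref{wkconv-prop1}: a sequence of random Borel probability measures $(\mu_n)$ whose moments converge in probability to those of a compactly supported measure $\mu$ converges weakly in probability to $\mu$. The plan is to reduce weak convergence to convergence of moments via the Weierstrass approximation theorem, using the compactness of $\mathrm{supp}(\mu)$ to control the approximation error. Since this is cited as \cite{AGZ10}(2.1.8), the argument is standard, and I would present it as follows.

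First I would fix $K>0$ with $\mu([-K,K])=1$ and a continuous bounded $f:\BR\to\BR$; enlarging $K$ if necessary I may work on a compact interval and it suffices to handle $f$ continuous on $[-K-1,K+1]$. By Weierstrass, for any $\eta>0$ choose a polynomial $P$ with $\sup_{[-K-1,K+1]}|f-P|<\eta$. The triangle inequality then gives
\[
	|\langle\mu_n,f\rangle-\langle\mu,f\rangle|
	\leq |\langle\mu_n,f-P\rangle| + |\langle\mu_n,P\rangle-\langle\mu,P\rangle| + |\langle\mu,P-f\rangle|\,.
\]
The last term is at most $\eta$. The middle term tends to $0$ in probability because $P$ is a finite linear combination of monomials $x^\ell$ and each $\langle\mu_n,x^\ell\rangle\to\langle\mu,x^\ell\rangle$ in probability by hypothesis (a finite sum of sequences converging in probability converges in probability). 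The genuine issue is the first term: $|\langle\mu_n,f-P\rangle|$ is bounded by $\eta$ only on the event that $\mu_n$ is supported in $[-K-1,K+1]$, which we do not know a priori.

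Here is where I would do the one nontrivial step: control the mass $\mu_n$ places outside $[-K-1,K+1]$ using an even moment. Fix an even integer $\ell_0$ large enough that $\big(\tfrac{K}{K+1}\big)^{\ell_0}<\eta$. Then by Markov's inequality applied to $x^{\ell_0}\geq 0$,
\[
	\mu_n\big(\BR\setminus[-K-1,K+1]\big)
	\leq \frac{\langle\mu_n,x^{\ell_0}\rangle}{(K+1)^{\ell_0}}\,.
\]
Since $\langle\mu_n,x^{\ell_0}\rangle\to\langle\mu,x^{\ell_0}\rangle=\int x^{\ell_0}d\mu\leq K^{\ell_0}$ in probability, with probability tending to $1$ we have $\langle\mu_n,x^{\ell_0}\rangle\leq K^{\ell_0}+\eta(K+1)^{\ell_0}$, hence on that event $\mu_n(\BR\setminus[-K-1,K+1])\leq (K/(K+1))^{\ell_0}+\eta\leq 2\eta$. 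On this event,
\[
	|\langle\mu_n,f-P\rangle|
	\leq \sup_{[-K-1,K+1]}|f-P| + \big(\|f\|_\infty+\sup_{[-K-1,K+1]}|P|\big)\cdot\mu_n\big(\BR\setminus[-K-1,K+1]\big)
	\leq \eta + C\eta\,,
\]
where $C$ depends only on $f$, $P$ and $K$ (note $\|f\|_\infty<\infty$ since $f$ is bounded, but $P$ is only controlled on the compact interval — this is why we needed the mass bound rather than a crude global estimate; outside $[-K-1,K+1]$ we only use that $|f|\le\|f\|_\infty$ and discard $P$'s contribution there by instead bounding $|\langle\mu_n,f-P\rangle|$ on the tail by $(\|f\|_\infty + \sup_{[-K-1,K+1]}|P|)$ times the tail mass, after splitting the integral).

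Finally I would assemble the estimate: given $\varepsilon>0$, pick $\eta$ with $(2+C)\eta<\varepsilon/2$ — wait, $C$ depends on $P$ which depends on $\eta$, so I would instead first fix $\eta$ small, which determines $P$ and hence $C$, then note the total bound is $(2+C)\eta + o_{\BP}(1)$; choosing $\eta$ at the outset so that $(2+C(\eta))\eta<\varepsilon$ is circular, so the clean way is: the bound on $|\langle\mu_n,f\rangle-\langle\mu,f\rangle|$ is at most $\eta(1+C)+\eta$ on an event of probability $\to 1$, plus a term $\to 0$ in probability; since $\eta>0$ was arbitrary and $C=C(\eta,f,K)$ is finite for each fixed $\eta$, taking $\eta\to 0$ after $n\to\infty$ gives $\limsup_n\BP(|\langle\mu_n,f\rangle-\langle\mu,f\rangle|>\varepsilon)=0$ for every $\varepsilon>0$. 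This is the desired weak convergence in probability. The only real content is the truncation step via a high even moment and Markov's inequality; everything else is bookkeeping with the triangle inequality and the fact that finitely many in-probability limits can be combined.
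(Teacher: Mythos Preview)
The paper does not prove this proposition at all; it simply quotes it from \cite{AGZ10}. Your outline is the standard moment-method argument and is correct in spirit, but there is a genuine gap in the tail estimate for the polynomial.

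You claim that outside $[-K-1,K+1]$ one may bound $\int_{|x|>K+1}|f-P|\,d\mu_n$ by $(\|f\|_\infty+\sup_{[-K-1,K+1]}|P|)\cdot\mu_n(\{|x|>K+1\})$. This is false: a polynomial is \emph{not} bounded on $\{|x|>K+1\}$ by its supremum over $[-K-1,K+1]$, so $\int_{|x|>K+1}|P|\,d\mu_n$ cannot be controlled this way. You even flag that ``$P$ is only controlled on the compact interval'' and then use a bound that contradicts exactly this.

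The repair is to control the tail integral of $|P|$ with moments again. Writing $P(x)=\sum_{j\le m}c_jx^j$, Cauchy--Schwarz gives for each $j$
\[
\int_{|x|>K+1}|x|^j\,d\mu_n \;\le\; \langle\mu_n,x^{2j}\rangle^{1/2}\,\mu_n\big(\{|x|>K+1\}\big)^{1/2}\,.
\]
The first factor converges in probability to $\langle\mu,x^{2j}\rangle^{1/2}\le K^{j}$ by hypothesis, and the second tends to $0$ in probability by your own Markov argument; hence $\int_{|x|>K+1}|P|\,d\mu_n\to 0$ in probability for each fixed $P$. With this correction the circularity you worry about also dissolves: for fixed $\eta$ (hence fixed $P$) every term except the $2\eta$ from the Weierstrass approximation goes to $0$ in probability, so $\limsup_n\BP(|\langle\mu_n,f\rangle-\langle\mu,f\rangle|>\varepsilon)=0$ once $2\eta<\varepsilon$.
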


Since the probability measures $\nu_{d,k}$ and $\mu_{d,k}$ from Theorem \ref{thm:main_Thm2} are compactly supported, by Proposition \ref{wkconv-prop1}, it suffices to show that $\langle \mu_{\Delta_{d-1}^+(X_i)},x^\ell\rangle \overset{i\rightarrow \infty}{\longrightarrow}  \langle \nu_{d,k},x^\ell\rangle$ in probability for all $\ell\geq 0$ and similarly $\langle \mu_{A_{X_i}},x^\ell\rangle \overset{i\rightarrow \infty}{\longrightarrow} \langle \mu_{d,k},x^\ell\rangle$. 
In order to prove the weak convergence in probability of the above sequences, we first need a simpler way to describe the limiting values $\langle \nu_{d,k},x^\ell\rangle$ and $\langle \mu_{d,k},x^\ell\rangle$. 

\begin{Theorem}[\cite{Ro14}]\label{Rosen}
	$\nu_{d,k}$ and $\mu_{d,k}$ are the spectral measures of the upper Laplacian $\Delta_{d-1}^+(T_{d,k})$ and the adjacency matrix $A_{T_{d,k}}$ of the arboreal complex $T_{d,k}$ respectively. In particular they are the unique probability measures such that for every $\sigma_0\in T_{d,k}^{d-1}$ and every $\ell\geq 0$
\[
	\langle \Delta_{d-1}^+(T_{d,k})^\ell \one_{\sigma_0},\one_{\sigma_0}\rangle = \langle \nu_{d,k},x^\ell\rangle,\qquad \forall \ell\geq 0
\]
and
\[
	\langle A_{T_{d,k}}^\ell\one_{\sigma_0},\one_{\sigma_0}\rangle = \langle \mu_{d,k},x^\ell\rangle,\qquad \forall \ell\geq 0
\]
respectively.
\end{Theorem}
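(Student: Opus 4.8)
The plan is to prove Theorem~\ref{Rosen} (following \cite{Ro14}) by computing the diagonal resolvent of $A_{T_{d,k}}$ directly, exploiting the self-similar tree structure of the arboreal complex. \textbf{Step 1 (reductions).} Every $(d-1)$-face of $T_{d,k}$ has degree $k$, so the degree operator on $\Omega^{d-1}(T_{d,k})$ is the scalar $kI$ and $\Delta_{d-1}^+(T_{d,k}) = kI - A_{T_{d,k}}$. Hence the spectral measure of $\Delta_{d-1}^+(T_{d,k})$ at $\one_{\sigma_0}$ is the pushforward under $x\mapsto k-x$ of the spectral measure of $A_{T_{d,k}}$ at $\one_{\sigma_0}$, which is exactly the relation between $\mu_{d,k}$ and $\nu_{d,k}$; so it suffices to treat $A_{T_{d,k}}$, and the choice of $\sigma_0$ is irrelevant since $\mathrm{Aut}(T_{d,k})$ acts transitively on $(d-1)$-faces. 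The spectral measure $\mu$ of the bounded self-adjoint operator $A_{T_{d,k}}$ at $\one_{\sigma_0}$ is determined by its Stieltjes transform $G(z) := \langle (A_{T_{d,k}}-z)^{-1}\one_{\sigma_0},\one_{\sigma_0}\rangle = \int_{\BR}(x-z)^{-1}\,d\mu(x)$ for $\mathrm{Im}\,z\neq 0$, and recovered from it by Stieltjes inversion $d\mu(x) = \tfrac1\pi\lim_{\ep\downarrow 0}\mathrm{Im}\,G(x+i\ep)\,dx$ (plus a check for atoms); so the moment identities in the theorem follow once $G$ is identified with the Stieltjes transform of the explicit density defining $\mu_{d,k}$.

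\textbf{Step 2 (branching recursion).} The line graph $G_d(T_{d,k})$ is a ``tree of cliques'': each $d$-face $\tau$ induces a clique $K_{d+1}$ on its $d+1$ faces of dimension $d-1$, distinct such cliques share at most one vertex, and $G_d(T_{d,k})$ has no cycle other than those lying inside a single clique. I would encode the branches of $T_{d,k}$ by rooted sub-complexes: for an oriented $(d-1)$-face $\sigma$ contained in a $d$-face $\tau$, let $T[\sigma,\tau]$ be the connected component of $\sigma$ in $T_{d,k}$ after deleting every $d$-face through $\sigma$ except $\tau$, and let $g(z)$ be the $\sigma$-diagonal entry of the resolvent of $A$ restricted to $T[\sigma,\tau]$; by transitivity this is independent of $(\sigma,\tau)$. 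Expanding the resolvent across a single clique by a Schur-complement identity — the $d$ non-root faces of $\tau$ each open into an independent branch, each attached via $k-1$ further $d$-faces, while one tracks the $\pm1$ signs that the induced boundary orientations put into the adjacency operator — yields a fixed-point equation for $g(z)$, which I expect to be a quadratic of the form $g=(a-z-b\,g)^{-1}$ with $a,b$ explicit in $d,k$ (morally $b=(k-1)d$, one contribution per new direction times its multiplicity, and $a$ an effective diagonal shift coming from the \emph{signed} clique, which is what produces the center $-(d-1)$ of $J_{d,k}$). Since $\sigma_0$ itself lies in $k$ cliques rather than being a root, $G(z)$ is then an explicit rational function of $g(z)$.

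\textbf{Step 3 (solve and identify).} Take the branch of the quadratic that is Herglotz — analytic off $\BR$, with $\mathrm{Im}\,g$ of the correct sign on each half-plane and $g(z)\sim -1/z$ as $|z|\to\infty$ — which pins down $g$, hence $G$. Then $\tfrac1\pi\mathrm{Im}\,G(x+i0)$ is supported exactly where the discriminant is negative, which should reproduce $J_{d,k}=[-d+1-2\sqrt{(k-1)d},\,-d+1+2\sqrt{(k-1)d}]$, and on $J_{d,k}$ it should simplify to $\tfrac{k\sqrt{4(k-1)d-(x-1+d)^2}}{2\pi(k-x)(dk+x)}$, the density of $\mu_{d,k}$; one also checks there is no point mass (square-root vanishing at the endpoints, and for $k>d+1$ the denominator $(k-x)(dk+x)$ has no zero inside $J_{d,k}$, equivalently the $\ell^2$-kernel of the coboundary on $T_{d,k}$ is trivial, so $\Delta_{d-1}^+$ has no atom at $0$). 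Pushing forward under $x\mapsto k-x$ then gives the stated formula for $\nu_{d,k}$ on $I_{d,k}$, and Step~1 turns this into the moment identities of the theorem.

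The main obstacle is Step~2: getting the recursion exactly right, which has two delicate parts — (i) the orientation signs in the definition of $A_{T_{d,k}}$, so that one must first determine the switching class of the signed $K_{d+1}$ carried by each clique before a scalar recursion is even meaningful (here the absence of nontrivial cycles in $G_d(T_{d,k})$ is what makes a single clique the right unit of self-similarity), and (ii) bookkeeping the multiplicities ($\sigma$ has degree $k$, lies in $k$ cliques, each clique contributes $d$ further faces, each continuing through $k-1$ new $d$-faces) so that $a$ and $b$ come out correctly. Once the recursion is fixed, Steps~1 and~3 are routine complex analysis and algebra. Alternatively, Step~3 can be bypassed by a direct moment computation in the spirit of McKay: $\langle A_{T_{d,k}}^\ell\one_{\sigma_0},\one_{\sigma_0}\rangle$ counts signed closed walks of length $\ell$ from $\sigma_0$ in $G_d(T_{d,k})$, which on a tree of cliques admits a Catalan-type enumeration, and the resulting sequence is matched to the moments of the explicit density via a contour integral.
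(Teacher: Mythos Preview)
The paper does not prove Theorem~\ref{Rosen}; it is quoted from \cite{Ro14} and used as a black box (the theorem environment carries the citation and no proof follows). So there is no ``paper's own proof'' to compare against.

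That said, your sketch is a correct and standard outline for establishing such a result, and is in the spirit of how spectral measures of self-similar tree operators are computed (and, as far as one can tell from the statement, consistent with what \cite{Ro14} does). Step~1 is entirely correct: $T_{d,k}$ is $k$-regular so $\Delta_{d-1}^+=kI-A$, transitivity makes the choice of $\sigma_0$ irrelevant, and the spectral measure is determined by its Stieltjes transform. Step~3 is routine once Step~2 is carried out. Your identification of the obstacle is also accurate: the orientation signs mean that the clique carried by each $d$-face is the \emph{signed} complete graph $I-J$ (eigenvalues $-d$ once and $1$ with multiplicity $d$) rather than $J-I$, and for $d\geq 2$ these are not switching-equivalent; this is exactly what shifts the center of $J_{d,k}$ to $-(d-1)$. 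One small point: the recursion does not reduce to a single scalar equation $g=(a-z-bg)^{-1}$ in one step, because removing the root from a branch leaves a clique $K_d$ on the remaining $d$ faces, each of which then feeds into $k-1$ further branches; you will need either a two-step recursion (root $\to$ clique $\to$ $k-1$ branches) or a Schur complement across the full $K_{d+1}$ with the branch self-energy $(k-1)g$ loaded on the $d$ non-root vertices. Either way the outcome is a quadratic in $g$ with discriminant $4(k-1)d-(x+d-1)^2$, which produces the stated density after inversion; your endpoint/denominator checks for the absence of atoms are correct.
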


Combining \eqref{eq:weak_convergence_1}, together with Proposition \ref{wkconv-prop1} and Theorem \ref{Rosen}, we conclude that in order to prove Theorem \ref{thm:main_Thm2} it suffices to show that the following converges in probability for all $\ell\geq 0$
\begin{equation}\label{eq:weak3}
	\binom{n_i}{d}^{-1}\mathrm{tr}(\Delta_{d-1}^+(X_i)^\ell)\underset{^{i\to\infty}}{\longrightarrow}\langle\Delta_{d-1}^+(T_{d,k})^\ell \one_{\sigma_0},\one_{\sigma_0}\rangle
\end{equation}
and 
\begin{equation}\label{eq:weak4}
	\binom{n_i}{d}^{-1}\mathrm{tr}(A_{X_i}^\ell)\underset{^{i\to\infty}}{\longrightarrow}\langle A_{T_{d,k}}^\ell\one_{\sigma_0},\one_{\sigma_0}\rangle\,.
\end{equation}



\subsection{The oriented line-graph}

\begin{defn}\label{orient-line}
	Let $X$ be a $d$-dimensional simplicial complex. The \emph{oriented line-graph} of $X$, denoted $\overrightarrow{G}_d(X)=( X^{d-1}_\pm, \overrightarrow{E}_d(X))$, is the graph whose vertex set $X^{d-1}_\pm$ is composed of all oriented $(d-1)$-faces in $X$ and its edge set $\overrightarrow{E}_d(X)$ is defined to be the set of pairs $\{\sigma,\sigma'\}$ from $X^{d-1}_\pm$ such that $\sigma$ is a neighbor of $\sigma'$ in $X$ (see Figure \ref{fig:An-oriented-edge} for an illustration of the neighboring relation).
\end{defn}

The oriented line-graph allows us to rewrite the left hand-side of \eqref{eq:weak4} in a form which is similar to the term on the right and is thus useful for proving Theorem \ref{thm:main_Thm2}.

\begin{proposition} \label{powers-empirical-dist}
	Let $X$ be a pure, $d$-complex such that $\deg(\sigma)<\infty$ for all $\sigma\in X^{d-1}$. For $\ell\geq 0$ and $\sigma,\sigma'\in X^{d-1}_\pm$ denote by $\phi_\ell(X;\sigma, \sigma')$ the number of paths of length $\ell$ in $\overrightarrow{G}_d(X)$ from $\sigma$ to $\sigma'$. Then 
\[
	\langle A^\ell_X\one_\sigma, \one_{\sigma'}\rangle=\phi_\ell(X;\sigma,\sigma')-\phi_\ell(X;\sigma,\overline{\sigma'}),\qquad\qquad \forall \sigma,\sigma' \in X^{d-1}_\pm\,.	
\]
In particular, for every choice of orientation $X^{d-1}_+$ for each of the $(d-1)$-faces 
\[
	\frac{1}{|X^{d-1}_+|}\mathrm{tr}(A_{X_i}^\ell) =\frac{1}{|X^{d-1}_+|}\sum_{\sigma \in X^{d-1}_+}\langle A^\ell_{X}\one_\sigma , \one_\sigma\rangle= \frac{1}{|X^{d-1}_+|}\sum_{\sigma\in X^{d-1}_+}\big(\phi_\ell(X;\sigma,\sigma)- \phi_\ell(X;\sigma,\overline{\sigma})\big)\,.
\]
\end{proposition}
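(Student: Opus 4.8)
The plan is to expand the matrix entry $\langle A_X^r \one_\sigma, \one_{\sigma'}\rangle$ by iterating the definition of $A_X$ from \eqref{eq:Adj_operator}, and to match the resulting combinatorial sum with a signed count of walks in the oriented line-graph $\overrightarrow{G}_d(X)$. First I would record the basic fact that for $f\in\Omega^{d-1}(X)$ one has $\langle f,\one_{\sigma'}\rangle = f(\sigma')$, which is immediate from the definition of the inner product together with $f(\overline{\sigma'})=-f(\sigma')$; thus $\langle A_X^r\one_\sigma,\one_{\sigma'}\rangle = (A_X^r\one_\sigma)(\sigma')$. Next, expanding $(A_X^r\one_\sigma)(\sigma')$ via the neighbouring relation gives
\[
	(A_X^r\one_\sigma)(\sigma') = \sum_{\sigma'=\sigma_0\sim\sigma_1\sim\cdots\sim\sigma_{r-1}\sim\sigma_r}\one_\sigma(\sigma_r),
\]
where the sum runs over all walks of length $r$ in $\overrightarrow{G}_d(X)$ starting at $\sigma'$ (note the neighbouring relation is symmetric, so walks may be read in either direction). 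The summand $\one_\sigma(\sigma_r)$ equals $+1$ if $\sigma_r=\sigma$, equals $-1$ if $\sigma_r=\overline\sigma$, and $0$ otherwise. Grouping the terms by these three cases, the contribution of $+1$-terms is exactly $\phi_r(X;\sigma',\sigma)$ and the contribution of $-1$-terms is exactly $\phi_r(X;\sigma',\overline\sigma)$, so $\langle A_X^r\one_\sigma,\one_{\sigma'}\rangle = \phi_r(X;\sigma',\sigma) - \phi_r(X;\sigma',\overline\sigma)$. Finally, since reversing a walk is a bijection between walks from $\sigma'$ to $\sigma$ and walks from $\sigma$ to $\sigma'$, we have $\phi_r(X;\sigma',\sigma)=\phi_r(X;\sigma,\sigma')$ and likewise $\phi_r(X;\sigma',\overline\sigma)=\phi_r(X;\overline\sigma,\sigma')=\phi_r(X;\sigma,\overline{\sigma'})$ (using the symmetry $\sigma\sim\sigma'\iff\overline\sigma\sim\overline{\sigma'}$, or just the evident bijection flipping all orientations along a walk), which yields the stated identity.

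For the "in particular" part, one takes a choice of orientations $X^{d-1}_+$ and applies the identity with $\sigma'=\sigma$ for each $\sigma\in X^{d-1}_+$. Since the eigenvalues of $A_X$ are unchanged under a unitary change of orthonormal basis and $(\one_\sigma)_{\sigma\in X^{d-1}_+}$ is an orthonormal basis of $\Omega^{d-1}(X)$, we get $\mathrm{tr}(A_X^r)=\sum_{\sigma\in X^{d-1}_+}\langle A_X^r\one_\sigma,\one_\sigma\rangle$; dividing by $|X^{d-1}_+|=|X^{d-1}|$ and substituting the identity gives the displayed formula. (Here I would use $A_X$ rather than $A_{X_i}$ as the statement is about a single complex; the $i$-subscripts in the displayed equation appear to be a minor typo.)

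The hardest—though still routine—point is keeping the bookkeeping of orientations consistent: one must be careful that the "walk expansion" of $(A_X^r\one_\sigma)(\sigma')$ is over oriented faces and that the neighbouring relation $\sim$ is the one on $X^{d-1}_\pm$ from \cite{PR12}, so that $\sigma_j$ and $\overline{\sigma_j}$ are genuinely different vertices of $\overrightarrow{G}_d(X)$ contributing with opposite signs. Provided the sign conventions in $\one_\sigma$ and in the neighbouring relation are tracked correctly, no real obstacle arises; the proposition is essentially a transcription of the standard "$r$-th power of the adjacency matrix counts walks of length $r$" identity into the signed, oriented setting of simplicial $(d-1)$-forms.
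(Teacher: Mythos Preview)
Your proof is correct and follows essentially the same approach as the paper: both arguments amount to the standard ``powers of the adjacency operator count walks'' identity, transported to the signed/oriented setting. The only organizational difference is that the paper argues by induction on $r$, using the identity $A_X\one_\sigma=\sum_{\rho\sim\sigma}\one_\rho$ to build the walk outward from $\sigma$ (so the variables come out in the stated order directly), whereas you expand $(A_X^r\one_\sigma)(\sigma')$ all at once from $\sigma'$ and then invoke the reversal and orientation-flip symmetries of $\overrightarrow{G}_d(X)$ to swap endpoints; both are valid and equally short.
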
 

\begin{proof}
	The proof follows by induction on $\ell$. For $\ell=0$, $A_X^0=\mathrm{Id}_{\Omega^{d-1}(X)}$ and thus 
\[
	\langle A_X^0 \one_\sigma,\one_{\sigma'}\rangle = \begin{cases} 1 & \sigma=\sigma' \\ 
	-1 & \sigma=\overline{\sigma'}\\	
	0 & \text{otherwise} \end{cases}\,.	
\]
On the other hand, from the definition of $\phi_r$, we have that 
\[
	\phi_0(X;\sigma,\sigma')=\begin{cases} 1 & \sigma=\sigma' \\ 
	0 & \text{otherwise} \end{cases}\,,
\]
and thus 
\[
	\phi_0(X;\sigma,\sigma')-\phi_0(X;\sigma,\overline{\sigma'})=\begin{cases} 1 & \sigma=\sigma' \\ 
	-1 & \sigma=\overline{\sigma'}\\ 0 & \text{otherwise} \end{cases}\,,
\]
which proves the result for $\ell=0$. 

Turning to the induction step, assume the result holds for $\ell$ and observe $\langle A^{\ell+1}_X\one_\sigma,\one_{\sigma'}\rangle$. By definition $\langle A^{\ell+1}_X\one_\sigma,\one_{\sigma'}\rangle = \langle A^{\ell}_X(A_X\one_\sigma),\one_{\sigma'}\rangle$ and since
\[
	A_X\one_\si(\sigma'') = \sum_{\rho\sim\sigma''}\one_\sigma(\rho) = \begin{cases}
	1 & \sigma''\sim\sigma\\
	-1 & \sigma''\sim\overline{\sigma}\\
	0 & \text{otherwise}
	\end{cases}\,,
\]
namely
\[
	A_X\one_\si = \sum_{\rho\sim\sigma}\one_{\rho}\,,
\]
we conclude that 
\[
	\langle A^{\ell+1}_X\one_\sigma,\one_{\sigma'}\rangle = \big\langle A^\ell_X\big(\sum_{\rho\sim\sigma}\one_\rho\big),\one_{\sigma'}\big\rangle=\sum_{\rho\sim\sigma}\langle A_X^\ell\one_\rho,\one_{\sigma'}\rangle\,.
\]
Thus by induction
\[
	\langle A^{\ell+1}_X\one_\sigma,\one_{\sigma'}\rangle = \sum_{\rho\sim\sigma}\big(\phi_\ell(X;\rho,\sigma')-\phi_\ell(X;\rho,\overline{\sigma'})\big)=\phi_{\ell+1}(X;\sigma,\sigma')-\phi_{\ell+1}(X;\sigma,\overline{\sigma'})\,,
\]
where in the last step we used the fact that any path of length $\ell+1$ from $\sigma$ to $\sigma'$ is composed of one step from $\sigma$ to a neighbor $\rho$ of $\sigma$ in $\overrightarrow{G}_d(X)$ followed by a path of length $\ell$ from $\rho$ to $\sigma'$.

The formula for $\langle\mu_{A_X},x^\ell\rangle$ follows from the fact that $(\one_\sigma)_{\sigma\in X^{d-1}_+}$ is an orthonormal basis for $\Omega^{d-1}(X)$. 
\end{proof}

\subsection{Proof of Theorem \ref{thm:main_Thm2}}

We start by proving the results for the adjacency matrices. By Proposition \ref{wkconv-prop1} and the fact that $\mu_{d,k}$ is compactly supported it is enough to show that for all $\ell\geq 0$
\[
	\langle \mu_{A_{X_i}},x^\ell\rangle \overset{i\rightarrow \infty}{\longrightarrow} \langle \mu_{d,k},x^\ell\rangle\,,
\]
where the convergence is in probability. Furthermore, by Theorem \ref{Rosen}, this is equivalent to proving \eqref{eq:weak4}, i.e., that for every $\ell\geq 0$ 
\[
		\binom{n_i}{d}^{-1}\mathrm{tr}(A_{X_i}^\ell)\underset{^{i\to\infty}}{\longrightarrow}\langle A_{T_{d,k}}^\ell\one_\sigma,\one_\sigma\rangle,\qquad \text{in probability}\,.
\]

Finally, by Proposition \ref{powers-empirical-dist} this is equivalent to proving that for every $l\geq 0$
\[
	\frac{1}{|X^{d-1}_{i,+}|}\sum_{\sigma\in X^{d-1}_{i,+}}\big(\phi_\ell(X_i;\sigma,\sigma)- \phi_\ell(X_i;\sigma,\overline{\sigma})\big)\underset{^{i\to\infty}}{\longrightarrow} \phi_\ell(T_{d,k},\sigma_0,\sigma_0)-\phi_\ell(T_{d,k};\sigma_0,\overline{\sigma_0})\,,
\]
where the convergence is in probability and $\sigma_0$ is some arbitrary choice of an oriented $(d-1)$-face in $T_{d,k}$.

Abbreviate 
\[
	\Phi_{i,\ell}:=\sum_{\sigma\in X^{d-1}_{i,+}}\big(\phi_\ell(X_i;\sigma,\sigma)- \phi_\ell(X_i;\sigma,\overline{\sigma})\big)\,.
\]

For $i\geq 1$ and $\ell\geq 0$, denote by $N_{i,\ell}$ the number of $(d-1)$-faces in $X_i^{d-1}$ whose $\lceil \ell/2\rceil$-neighboring complex is isomorphic to $T_{d,k}(\lceil \ell/2\rceil)$, i.e., 
\begin{equation}\label{eq:The_definition_of_N}
	N_{i,\ell/2} = |\{\sigma\in X^{d-1}_i ~:~ X_i(\sigma,\lceil \ell/2\rceil)\cong T_{d,k}(\lceil \ell/2\rceil)\}|\,.
\end{equation}

Since any closed path of length $\ell$ in $\overrightarrow{G}_d(X_i)$ starting from $\sigma\in X^{d-1}_\pm$ uses only vertices which are at distance at most $\lceil \ell/2\rceil$ from $\sigma$ in $\overrightarrow{G}_d(X_i)$, it follows that on the event $E_{i,\lceil \ell/2\rceil}(\sigma)=\{X_i(\sigma,\lceil \ell/2\rceil)\cong T_{d,k}(\lceil \ell/2\rceil)\}$, it holds that 
\[
	\phi_\ell(X_i;\sigma,\sigma)-\phi_\ell(X_i;\sigma,\overline{\sigma}) = \phi_\ell(T_{d,k};\sigma_0,\sigma_0)-\phi_\ell(T_{d,k};\sigma_0,\overline{\sigma_0})\,.
\]
Furthermore, since the degree of each of the $(d-1)$-faces in $X_i$ is bounded by $k$, it follows that the degree of each edge in $\overrightarrow{G}_d(X_i)$ is bounded by $dk$ and hence that for every $\ell\geq 0$ and every $\sigma\in X_{i,\pm}^{d-1}$
\[
	|\phi_\ell(X_i;\sigma,\sigma)-\phi_\ell(X_i;\sigma,\overline{\sigma})|\leq (dk)^\ell\,.
\]
Hence 
\begin{equation}\label{eq:weak_conv_proof_1}
	\Phi_{i,\ell} \geq N_{i,\ell/2}\cdot \big(\phi_\ell(T_{d,k};\sigma_0,\sigma_0)-\phi_\ell(T_{d,k};\sigma_0,\overline{\sigma_0})\big)-(|X^{d-1}_{i,\pm}|-N_{i,\ell/2})\cdot (dk)^\ell\,.
\end{equation}
and
\begin{equation}\label{eq:weak_conv_proof_2}
	\Phi_{i,\ell}\leq N_{i,\ell/2}\cdot \big(\phi_\ell(T_{d,k};\sigma_0,\sigma_0)-\phi_\ell(T_{d,k};\sigma_0,\overline{\sigma_0})\big) + (|X^{d-1}_{i,\pm}|-N_{i,\ell/2})\cdot (dk)^\ell
\end{equation}

Combining \eqref{eq:weak_conv_proof_1} and \eqref{eq:weak_conv_proof_2}, we conclude that 
\begin{equation}\label{eq:weak_conv_proof_3}
\begin{aligned}
	&\bigg|\frac{\Phi_{i,\ell}}{|X_{i,+}^{d-1}|}-\big(\phi_\ell(T_{d,k};\sigma_0,\sigma_0)-\phi_\ell(T_{d,k};\sigma_0,\overline{\sigma_0})\big)\bigg|\\
	\leq  &\frac{|X^{d-1}_{i,+}|-N_{i,\ell/2}}{|X_{i,+}^{d-1}|}\cdot\Big(\big|\phi_\ell(T_{d,k};\sigma_0,\sigma_0)-\phi_\ell(T_{d,k};\sigma_0,\overline{\sigma_0})\big|+ (dk)^\ell\Big)\equiv \frac{C_{d,k,\ell}(|X^{d-1}_{i,+}|-N_{i,\ell/2})}{|X_{i,+}^{d-1}|}\,.
\end{aligned}
\end{equation}

Let $\varepsilon>0$, by \eqref{eq:weak_conv_proof_3}, Markov's inequality and the linearity of expectation
\[
\begin{aligned}
	&\BP\bigg(\bigg|\frac{\Phi_{i,\ell}}{|X_{i,+}^{d-1}|}-\big(\phi_\ell(T_{d,k};\sigma_0,\sigma_0)-\phi_\ell(T_{d,k};\sigma_0,\overline{\sigma_0})\big)\bigg|>\varepsilon\bigg)\\
	\leq &\BP\bigg(\frac{C_{d,k,\ell}(|X^{d-1}_{i,+}|-N_{i,\ell/2})}{|X_{i,+}^{d-1}|}>\varepsilon\bigg) \\
	\leq & \frac{C_{d,k,\ell}}{|X^{d-1}_{i,+}|\varepsilon}\cdot \BE[|X^{d-1}_{i,+}|-N_{i,\ell/2}]\\
	= & \frac{C_{d,k,\ell}}{|X^{d-1}_{i,+}|\varepsilon}\cdot \sum_{\sigma\in X^{d-1}_i}\BP\big(X_i(\sigma,\lceil \ell/2\rceil)\not\cong T_{d,k}(\lceil \ell/2\rceil)\big)\,.
\end{aligned}
\]
Using Theorem \ref{cor:local_convergence}, we conclude that 
\[
	\lim_{i\to\infty}\BP\bigg(\bigg|\frac{\Phi_{i,\ell}}{|X_{i,+}^{d-1}|}-\big(\phi_\ell(T_{d,k};\sigma_0,\sigma_0)-\phi_\ell(T_{d,k};\sigma_0,\overline{\sigma_0})\big)\bigg|>\varepsilon\bigg)=0\,,
\]
thus proving the convergence in probability. 

Next, we turn to deal with the convergence for the Laplacians. As for the adjacency matrices, it suffices by \eqref{eq:weak3} to prove that for every $\ell\geq 0$
\[	
	\binom{n_i}{d}^{-1}\mathrm{tr}(\Delta_{d-1}^+(X_i)^\ell)\underset{^{i\to\infty}}{\longrightarrow}\langle\Delta_{d-1}^+(T_{d,k})^\ell \one_{\sigma_0},\one_{\sigma_0}\rangle,\qquad\text{in probability}\,,
\]
or equivalently 
\[	
	\frac{1}{|X^{d-1}_+|}\sum_{\sigma \in X^{d-1}_+}\Big[\langle \Delta_{d-1}^+(X_i)^\ell \one_\sigma , \one_\sigma\rangle -\langle\Delta_{d-1}^+(T_{d,k})^\ell \one_{\sigma_0},\one_{\sigma_0}\rangle\Big]\underset{^{i\to\infty}}{\longrightarrow}0,\qquad\text{in probability}\,. 
\]
Note that $T_{d,k}$ is $k$-regular and hence that $\Delta_{d-1}^+(T_{d,k})=k\cdot \mathrm{Id}-A_{T_{d,k}}$. Furthermore, since $\Delta_{d-1}^+(X_i) = D_{X_i}-A_{X_i}$, where $D_{X_i}$ is the diagonal operator of the degrees, $D_{X_i}f(\sigma)=\deg(\sigma)f(\sigma)$, it follows that on the event $E_{i, \ell+1}(\sigma)=\{X_i(\sigma, \ell+1)\cong T_{d,k}( \ell+1)\}$
\[
	\Delta_{d-1}^+(X_i)=D_{X_i}-A_{X_i}=k\cdot \mathrm{Id}-A_{X_i}\,,
\]
and hence by the same argument used in the adjacency matrix case
\[
	\langle \Delta_{d-1}^+(X_i)^\ell \one_\sigma , \one_\sigma\rangle - \langle\Delta_{d-1}^+(T_{d,k})^\ell \one_{\sigma_0},\one_{\sigma_0}\rangle=0\,.
\]

In addition, since the degree of each of the $(d-1)$-faces in $X_i$ is bounded by $k$, it follows that (see \cite[Proposition 2.7(ii)]{PR12} for a similar argument)
\[
	|\langle \Delta_{d-1}^+(X_i)^\ell \one_\sigma , \one_\sigma\rangle - \langle\Delta_{d-1}^+(T_{d,k})^\ell \one_{\sigma_0},\one_{\sigma_0}\rangle|\leq 2((d+1)k)^\ell\,.
\]
Consequently 
\begin{equation}\label{eq:weak_conv_proof_3}
	\frac{1}{|X^{d-1}_+|}\sum_{\sigma \in X^{d-1}_+}\Big|\langle \Delta_{d-1}^+(X_i)^\ell \one_\sigma , \one_\sigma\rangle -\langle\Delta_{d-1}^+(T_{d,k})^\ell \one_{\sigma_0},\one_{\sigma_0}\rangle\Big|\leq \Big(1-\frac{N_{i,\ell+1}}{|X^{d-1}_{i,\pm}|}\Big)\cdot 2((d+1)k)^\ell\,,
\end{equation}
where $N_{i,\ell+1}$ is defined in \eqref{eq:The_definition_of_N}. The rest of the proof is similar to the proof for the adjacency operator. \hfill\qed


\section{The asymptotic number of simplicial spanning trees} \label{Number}	

\subsection{Proof of Theorem \ref{thm:main_Thm1}} \label{Weight-num}

	Let $X$ be a $d$-complex on $n$ vertices with a complete $(d-1)$-skeleton, and recall that the weighted number of $j$-dimensional SSTs is given by
\[ 
	\kappa_j(X)=\sum\limits_{T\in \mathcal{T}_j(X)} \big\vert \widetilde{H}_{j-1}(T;\mathbb{Z}) \big\vert^2\,.
\]  

We use the following version of the simplicial matrix tree theorem. 
\begin{Theorem}[\cite{DKM09}] \label{matrixtree-thm}
	Let $X$ be a $d$-dimensional simplicial complex. Denote by $\pi_d(x)$ the product of the non-trivial eigenvalues of the $(d-1)$-upper Laplacian. Then 
\[ 
	\pi_d(X)= \dfrac{\kappa_d(X) \cdot \kappa_{d-1}(X) }{\vert \tilde{H}_{d-2}(X;\mathbb{Z})\vert^2}\,. 
\]
\end{Theorem}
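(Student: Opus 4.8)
The plan is to recover this identity, the simplicial matrix tree theorem of \cite{Ka83,DKM09}, from the Cauchy--Binet formula applied to the integral boundary matrix. Write $\partial=\partial_d$ for the matrix of the cellular boundary map $C_d(X;\BZ)\to C_{d-1}(X;\BZ)$, with rows indexed by $X^{d-1}$, columns indexed by $X^d$ and entries in $\{0,\pm 1\}$; over $\BR$ one has $\Delta_{d-1}^+=\partial\partial^{T}$. One checks that $\Delta_{d-1}^+$ has exactly $\widetilde\beta_{d-1}(X)$ non-trivial zero eigenvalues, so $\pi_d(X)$ is the product of the nonzero eigenvalues of $\partial\partial^{T}$ when $\widetilde H_{d-1}(X;\BQ)=0$, and $\pi_d(X)=0$ otherwise. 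I would first dispose of the latter case: directly from the definition of a $d$-SST, $\widetilde H_{d-1}(X;\BQ)\neq0$ forces $\mathcal{T}_d(X)=\emptyset$, hence $\kappa_d(X)=0$ and both sides vanish. In the remaining case --- and in particular in the situation of this paper, where $X$ has a complete $(d-1)$-skeleton, so that $\widetilde H_{d-2}(X;\BZ)=0$ --- fix a $(d-1)$-dimensional spanning tree $\Upsilon\in\mathcal{T}_{d-1}(X)$.

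The argument then reduces to two determinantal identities, each an instance of Cauchy--Binet combined with the principle that the maximal minors of an integral presentation matrix compute the orders of torsion cokernels (Smith normal form). Let $\partial'$ be the submatrix of $\partial$ obtained by deleting the rows indexed by the top faces $\Upsilon^{d-1}$. For a set $\Gamma\subseteq X^d$ of the right cardinality one shows that the square submatrix $\partial'_\Gamma$ (columns $\Gamma$) is nonsingular exactly when $\Gamma$ is the set of top faces of a $d$-SST $T_\Gamma$ of $X$, and that then $|\det\partial'_\Gamma|=|\widetilde H_{d-1}(T_\Gamma;\BZ)|\cdot|\widetilde H_{d-2}(\Upsilon;\BZ)|\,/\,|\widetilde H_{d-2}(X;\BZ)|$, where one uses that $(d-2)$-homology is already determined by the common $(d-1)$-skeleton. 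Cauchy--Binet then yields
\[
	\det\!\big(\partial'(\partial')^{T}\big)=\sum_{\Gamma}\big(\det\partial'_\Gamma\big)^2=\frac{|\widetilde H_{d-2}(\Upsilon;\BZ)|^2}{|\widetilde H_{d-2}(X;\BZ)|^2}\sum_{T\in\mathcal{T}_d(X)}|\widetilde H_{d-1}(T;\BZ)|^2=\frac{|\widetilde H_{d-2}(\Upsilon;\BZ)|^2}{|\widetilde H_{d-2}(X;\BZ)|^2}\,\kappa_d(X).
\]
The second identity compares the pseudo-determinant of the full Laplacian with this truncated determinant,
\[
	\pi_d(X)=\frac{\kappa_{d-1}(X)}{|\widetilde H_{d-2}(\Upsilon;\BZ)|^2}\,\det\!\big(\partial'(\partial')^{T}\big),
\]
which I would obtain by the dual computation: in the non-degenerate case the kernel of $\partial\partial^{T}$ is the image of the coboundary $\partial_{d-1}^{T}$, and comparing the volume form that the deleted rows $\Upsilon^{d-1}$ induce on this subspace with the standard integral one produces, via a Cauchy--Binet expansion for $\partial_{d-1}$ (whose maximal minors enumerate the $(d-1)$-SSTs of $X$ weighted by $|\widetilde H_{d-2}(\cdot;\BZ)|^2$), precisely the displayed coefficient. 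Multiplying the two displays, the factor $|\widetilde H_{d-2}(\Upsilon;\BZ)|^2$ cancels and one is left with $\pi_d(X)=\kappa_d(X)\kappa_{d-1}(X)/|\widetilde H_{d-2}(X;\BZ)|^2$. (When $X$ has a complete $(d-1)$-skeleton this reads $\pi_d(X)=\kappa_d(X)\kappa_{d-1}(X)$, and $\kappa_{d-1}(X)=\kappa_{d-1}(K_n^{(d-1)})$ is evaluated by Kalai's theorem.)

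The Cauchy--Binet expansions and the degenerate case are routine; the crux is the torsion bookkeeping in the two minor evaluations. One must show, working with Smith normal forms of the relevant integer matrices, that deleting the block of rows indexed by a $(d-1)$-tree $\Upsilon$ redistributes the torsion of $\widetilde H_{d-1}$ and $\widetilde H_{d-2}$ of the various subcomplexes in just such a way that the spurious quantity $|\widetilde H_{d-2}(\Upsilon;\BZ)|$ --- which depends on the arbitrary choice of $\Upsilon$ --- cancels between the two identities, leaving only $|\widetilde H_{d-2}(X;\BZ)|$. Making this cancellation transparent rather than accidental is the delicate point, and is the reason the whole argument is carried out over $\BZ$ rather than passing to $\BR$ at the outset.
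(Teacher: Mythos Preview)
The paper does not prove this theorem: it is quoted from \cite{DKM09} (and \cite{Ka83}) and used as a black box in the derivation of \eqref{eq:num_of_SST_1}. There is therefore no ``paper's own proof'' to compare against. Your sketch is the standard Duval--Klivans--Martin argument --- Cauchy--Binet applied to the reduced boundary matrix $\partial'$ obtained by deleting rows indexed by a fixed $(d-1)$-SST $\Upsilon$, identification of the nonvanishing maximal minors with $d$-SSTs, and evaluation of those minors via Smith normal form --- and is correct in outline; the two displayed identities and their product are exactly what appears in \cite{DKM09}.
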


Furthermore, we recall Kalai's generalization of Cayley's formula, see \cite{Ka83}, which states
\[	
	\kappa_{d-1}(K_n^{(d-1)})=n^{ \binom{n-2}{d-1} }\,.
\]

Turning back to our setting, since $X$ has a complete $(d-1)$-skeleton, it follows that $|\tilde{H}_{d-2}(X;\mathbb{Z})|=1$. Furthermore we can apply Kalai's theorem to $X^{(d-1)}=K_n^{(d-1)}$, thus obtaining $\kappa_{d-1}(X)=n^{ \binom{n-2}{d-1} }$. Hence,
\[ 
	\pi_d(X) =\dfrac{\kappa_d(X) \cdot \kappa_{d-1}(X) }{\vert \tilde{H}_{d-2}(X;\mathbb{Z}) \vert ^2 }= n^{ \binom{n-2}{d-1} } \cdot \kappa_d(X)\,. 
\]
In particular, if $\pi_d(X)>0$, i.e., all zero-eigenvalues of $\Delta_{d-1}^+$ are trivial, by taking logarithm on both sides and dividing by $\binom{n}{d}$, the last equality can be rewritten as 
\begin{equation}\label{eq:num_of_SST_1}
\begin{aligned}
	\log \Big( \sqrt[\binom{n}{d}]{\kappa_d(X)} \Big) &= \binom{n}{d}^{-1}\sum\limits_{ \lambda \in \text{spec}(\Delta_{d-1}^+)\cap (0,\infty) } \log(\lambda) - \frac{d(n-d)\log(n)}{n(n-1)}\\
	&=\underset{(0,\infty)}{\int}  \log(t)d\mu_{\Delta_{d-1}^+(X)}(t)-\Theta\bigg(\frac{\log n}{n}\bigg)\,.
\end{aligned}
\end{equation}

Assume next that $(X_i)$ is a sequence of $d$-complexes with a complete $(d-1)$-skeleton such that $\pi_d(X_i)>0$ for every $i\geq 1$ and $|X_i^0|=n_i$ with $\lim_{i\to\infty}n_i=\infty$. If we were to know that
   \begin{equation} \label{integrals-conv}
   \underset{(0,\infty)}{\int}  \log(t)d\mu_{\Delta_{d-1}^+(X_i)}(t) \overset{i\rightarrow \infty}{\longrightarrow} \underset{(0,\infty)}{\int}  \log(t)d\nu_{d,k}(t)=:\overline{\xi}_{d,k},
   \end{equation}
in probability, then \eqref{eq:num_of_SST_1} would imply that 
\[
	\log \Big( \sqrt[\binom{n_i}{d}]{\kappa_d(X_i)} \Big) \underset{^{i\rightarrow \infty}}{\longrightarrow} \overline{\xi}_{d,k}\,,
\]
in probability and hence that 
\[
	\sqrt[\binom{n_i}{d}]{\kappa_d(X_i)} \underset{_{i\rightarrow \infty}}{\longrightarrow} e^{\overline{\xi}_{d,k}}\,
\]
in probability. Consequently, in order to complete the proof of Theorem \ref{thm:main_Thm1} is suffices to prove the following propositions.

\begin{proposition}\label{prop:num_of_SST_1}
   	Let $(X_i)_{i=1}^\infty$ be a sequence of $d$-dimensional $k$-regular uniform random Steiner complexes on $n_i$ vertices, with $(n_i)$ a sequence of $d$-admissible numbers satisfying $n_i\overset{i\rightarrow \infty}{\longrightarrow} \infty$. Then $\BP$-almost surely $\pi_d(X_i)>0$ for all sufficiently large $i$ and \eqref{integrals-conv} holds.
\end{proposition}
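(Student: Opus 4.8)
The plan is to split the statement into two parts: (a) $\pi_d(X_i)>0$ eventually almost surely, and (b) the convergence \eqref{integrals-conv}. For part (a), recall that $\pi_d(X_i)>0$ is equivalent to saying that all zero-eigenvalues of $\Delta_{d-1}^+(X_i)$ are trivial, i.e., that $\widetilde{H}_{d-1}(X_i;\BR)=0$, equivalently $\widetilde{\beta}_{d-1}(X_i)=0$. Since $X_i$ has a complete $(d-1)$-skeleton, this is exactly the statement that the $d$-faces of $X_i$ span the cycle space, i.e., that $X_i$ is $\BR$-acyclic in dimension $d-1$. This is a known feature of random Steiner complexes in the regime $k>k(d)$; the cleanest route is to invoke the spectral gap / coboundary-expansion type results available for this model (e.g. the analysis in \cite{LLR19} which establishes that for $k$ large enough the nontrivial spectrum of $\Delta_{d-1}^+$ is bounded away from $0$ with high probability, and in fact that the bad event has probability summable in $n_i$, so Borel–Cantelli gives the almost sure statement along the sequence). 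Alternatively, one can quote directly from the thesis \cite{Ten20}. The key point is that $k>4d^2+d+2$ is precisely the threshold under which such a spectral gap is available.

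For part (b), the obstacle is that $x\mapsto \log x$ (or $x\mapsto\log(k-x)$ under the transformation $x\mapsto k-x$) is \emph{not} bounded and continuous on the relevant interval: it blows up as $x\to 0^+$. Theorem \ref{thm:main_Thm2} only gives weak convergence in probability of $\mu_{\Delta_{d-1}^+(X_i)}$ to $\nu_{d,k}$, which controls $\langle\mu_i,f\rangle$ for bounded continuous $f$. So I would proceed as follows. First, the measures live on $[0,(d+1)k]$ since $\|\Delta_{d-1}^+(X_i)\|\le 2dk$ deterministically (degrees bounded by $k$), and $\nu_{d,k}$ is supported on $I_{d,k}\subset(0,\infty)$ with a strictly positive left endpoint $(\sqrt{k-1}-\sqrt d)^2>0$ when $k>k(d)$. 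Thus $\log$ restricted to the support of the limit is bounded and continuous, and the only danger is a small amount of $\mu_i$-mass near $0$. Fix a truncation level $\delta\in(0,(\sqrt{k-1}-\sqrt d)^2)$ and write $\log t = g_\delta(t) + h_\delta(t)$ where $g_\delta$ is a bounded continuous function agreeing with $\log$ on $[\delta,2dk]$ and $h_\delta$ is supported on $[0,\delta)$. The $g_\delta$-part converges in probability to $\langle\nu_{d,k},g_\delta\rangle=\langle\nu_{d,k},\log\rangle$ by Theorem \ref{thm:main_Thm2}. It remains to show that $\int_{(0,\delta)}|\log t|\,d\mu_{\Delta_{d-1}^+(X_i)}(t)\to 0$ in probability.

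Controlling this tail is the main work. One needs a quantitative lower bound showing that very few eigenvalues of $\Delta_{d-1}^+(X_i)$ are small. The natural input is again the spectral/expansion estimate for random Steiner complexes: with high probability the \emph{smallest nontrivial} eigenvalue of $\Delta_{d-1}^+(X_i)$ is at least some $c_{d,k}>0$ uniformly in $i$, which makes the tail integral vanish identically on a high-probability event. If only a weaker bound is available — say, that the number of eigenvalues below $\delta$ is $o(\binom{n_i}{d})$ with high probability, combined with the deterministic bound $|\log t|\le |\log(\text{smallest positive eigenvalue})|$ and a crude lower bound on the smallest positive eigenvalue of an integer-matrix-like Laplacian (e.g. via $\det$ of the reduced Laplacian being a positive integer, so the product of nontrivial eigenvalues is $\ge 1$, forcing the smallest to be $\ge (2dk)^{-(f_{d-1}-1)}$, whence $|\log t|\le C\binom{n_i}{d}\log(2dk)$) — then one gets $\int_{(0,\delta)}|\log t|d\mu_i \le o(1)\cdot C\log(2dk)$, which still suffices provided the count of small eigenvalues is $o(\binom{n_i}{d}/\binom{n_i}{d}) $... so in that weaker scenario one actually needs the count to be $o(1)$, i.e. genuinely a spectral gap. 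I therefore expect the cleanest and intended argument to lean on the honest spectral gap for $k>k(d)$: on an event of probability $1-o(1)$ (indeed $1-O(n_i^{-1})$, hence Borel–Cantelli-summable along $d$-admissible $n_i$), the tail integral is exactly zero, part (a) holds simultaneously, and \eqref{integrals-conv} follows by combining with the $g_\delta$-convergence and letting the error tend to $0$. Finally, applying the transformation $x\mapsto k-x$ (which carries $\Delta_{d-1}^+$ to $A+(k-\deg(\cdot))$, but in the $k$-regular case exactly to $kI-\Delta_{d-1}^+ $, matching $\nu_{d,k}\mapsto\mu_{d,k}$) identifies the limit as $\overline{\xi}_{d,k}=\int_{(0,\infty)}\log(k-t)\,d\nu_{d,k}(t)$, and a residue computation (or the explicit evaluation already recorded in \cite{Ro14,McK81b}) shows $e^{\overline{\xi}_{d,k}}=\xi_{d,k}$, completing the proof of Theorem \ref{thm:main_Thm1}.
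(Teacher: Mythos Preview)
Your high-level strategy matches the paper's: truncate $\log$ away from $0$, apply weak convergence (Theorem~\ref{thm:main_Thm2}) to the bounded continuous part, and eliminate the tail by establishing that, eventually almost surely, the nontrivial spectrum of $\Delta_{d-1}^+(X_i)$ lies in $[C,\infty)$ for some fixed $C>0$; this simultaneously gives $\pi_d(X_i)>0$ and makes the truncated integral equal the full one on both sides (since $\mathrm{supp}(\nu_{d,k})\subset[C,\infty)$ as well).

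Where your proposal is underspecified is the \emph{source} of that spectral gap, and this is exactly where the threshold $k>4d^2+d+2$ enters. The paper does not cite an off-the-shelf gap for $\Delta_{d-1}^+$; instead it combines two ingredients. First, an Abu-Fraiha--Meshulam result (Theorem~\ref{spec-gap-prob}, proved via Garland's method applied to Friedman's theorem on the links, which are random matching graphs) gives that all nontrivial eigenvalues of the \emph{adjacency} operator $A_{X_i}$ are at most $2d\sqrt{k-1}+\varepsilon$ eventually a.s., provided $k>(d+1)^2+1$. Second --- and this is the point you flag but do not actually use --- $X_i$ is not literally $k$-regular because the $k$ independent Steiner systems may collide; a direct moment estimate (Claim~5.4 / Corollary~\ref{cor:Steiner_deg}) shows that nevertheless all degrees lie in $[k-d-1,k]$ eventually a.s. Writing $\Delta_{d-1}^+ = (k\mathrm{Id}-A_{X_i})-(k\mathrm{Id}-D_{X_i})$ with $\|k\mathrm{Id}-D_{X_i}\|\le d+1$ on that event, Weyl's inequality transfers the adjacency bound to a Laplacian lower bound $k-2d\sqrt{k-1}-(d+1)-\varepsilon$, and this quantity is positive precisely when $k>4d^2+d+2$. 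Your proposal skips this Weyl perturbation step, which is the concrete mechanism linking the (available) adjacency gap to the (needed) Laplacian gap in the non-regular setting.

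Two minor remarks: your deterministic bound on the operator norm and your Borel--Cantelli reasoning are fine; and your final paragraph (the transformation $x\mapsto k-x$ and the explicit evaluation of $e^{\overline{\xi}_{d,k}}$) is the content of Proposition~\ref{prop:num_of_SST_2}, not of this proposition.
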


\begin{proposition}\label{prop:num_of_SST_2}
\[
	e^{\overline{\xi_{d,k}}}=\xi_{d,k}:=\dfrac{(k-1)^{k-1}}{\left( k-1-d \right)^{ \frac{k}{d+1}-1 } k^{\frac{d(k-1)-1}{d+1}} }\,.
\]
\end{proposition}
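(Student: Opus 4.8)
The content of the proposition is the evaluation of a definite integral. Using the change of variables $x\mapsto k-x$ relating $\mu_{d,k}$ and $\nu_{d,k}$ (Theorem~\ref{thm:main_Thm2}), one rewrites $\overline{\xi}_{d,k}=\int_{(0,\infty)}\log(t)\,d\nu_{d,k}(t)$, the integral of $\log$ against the explicit density of $\nu_{d,k}$ on $I_{d,k}=[a,b]$ with $a=(\sqrt{k-1}-\sqrt d)^2$, $b=(\sqrt{k-1}+\sqrt d)^2$, and the plan is to compute it by contour integration. First I would simplify the radical: a direct expansion gives $4(k-1)d-(k-1+d-t)^2=(t-a)(b-t)$, so the substitution $t=(k-1)+d-2\sqrt{(k-1)d}\cos\theta$ ($\theta\in[0,\pi]$) maps $I_{d,k}$ onto itself and turns $d\nu_{d,k}(t)$ into $\tfrac{2k(k-1)d}{\pi}\,\tfrac{\sin^2\theta}{t\bigl((d+1)k-t\bigr)}\,d\theta$. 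The structural heart of the computation is the pair of factorizations
\[
	t=\bigl|\sqrt{k-1}-\sqrt d\,e^{i\theta}\bigr|^2,
	\qquad
	(d+1)k-t=\bigl|\,\sqrt{d(k-1)}+e^{i\theta}\bigr|^2,
\]
the second holding because the solutions of $A^2+B^2=d(k-1)+1$, $AB=\sqrt{d(k-1)}$ are precisely $A^2=d(k-1)$, $B^2=1$. Since the integrand is even and $2\pi$-periodic, extend it to $[-\pi,\pi]$ and put $z=e^{i\theta}$; on $|z|=1$ we have $t=(\sqrt{k-1}-\sqrt d z)(\sqrt{k-1}-\sqrt d/z)$, and since $k-1>d$ both factors lie in the right half-plane, so $\log t=\operatorname{Log}(\sqrt{k-1}-\sqrt d z)+\operatorname{Log}(\sqrt{k-1}-\sqrt d/z)$ with the principal branch. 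The map $z\mapsto 1/z$ exchanges the two logarithmic summands while fixing the rest of the integrand, so after clearing denominators
\[
	\overline{\xi}_{d,k}
	=-\,k(k-1)d\;\frac{1}{2\pi i}\oint_{|z|=1}
	\frac{(z^2-1)^2\,\operatorname{Log}(\sqrt{k-1}-\sqrt d z)}
	{z(\sqrt{k-1}-\sqrt d z)(\sqrt{k-1}\,z-\sqrt d)(\sqrt{d(k-1)}+z)(1+\sqrt{d(k-1)}\,z)}\,dz .
\]

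Next I would evaluate this by the residue theorem. Of the five linear factors in the denominator, the ones vanishing inside the unit disc are $z$, $\sqrt{k-1}\,z-\sqrt d$ (root $\sqrt{d/(k-1)}<1$, since $k-1>d$), and $1+\sqrt{d(k-1)}\,z$ (root $-1/\sqrt{d(k-1)}$, of modulus $<1$ since $d(k-1)>1$); the remaining roots $\sqrt{(k-1)/d}$ and $-\sqrt{d(k-1)}$ lie outside, and $\operatorname{Log}(\sqrt{k-1}-\sqrt d z)$ is holomorphic on the disc, so there are exactly three simple residues. The residue at $z=0$ contributes a multiple of $\log\sqrt{k-1}$; the one at $z=\sqrt{d/(k-1)}$ a combination of $\log(k-1-d)$ and $\log(k-1)$; the one at $z=-1/\sqrt{d(k-1)}$ a combination of $\log k$ and $\log(k-1)$. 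Summing them, the $\log(k-1)$ coefficients combine (the terms quadratic in $k$ collapse into $(1+d)(k-2)$) to give
\[
	\overline{\xi}_{d,k}=(k-1)\log(k-1)-\frac{k-1-d}{d+1}\log(k-1-d)-\frac{d(k-1)-1}{d+1}\log k ,
\]
and exponentiating, using $\tfrac{k}{d+1}-1=\tfrac{k-1-d}{d+1}$, yields $\xi_{d,k}$ exactly.

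The deep content here is slight; essentially all the work is bookkeeping, and that is where I expect to need the most care: verifying the two factorization identities, justifying the principal branch on $|z|\le 1$, and — most importantly — correctly assigning the five linear factors to the inside or the outside of the unit circle, which is exactly where the hypothesis $k\ge d+1$ (and the sharper $k-1>d$, $d(k-1)>1$, both immediate under $k>4d^2+d+2$) enters. A single sign slip or a misplaced pole would change the final constant, so it is worth carrying the three residues out in full; as a consistency check, setting $d=1$ recovers McKay's $\xi_{1,k}=(k-1)^{k-1}/(k^2-2k)^{(k-2)/2}$.
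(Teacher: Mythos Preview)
Your argument is correct and reaches the right constant; it is, however, a genuinely different computation from the paper's. After the same linear substitution bringing the integral to $[-1,1]$, the paper expands $h_{d,k}(x)=g_{d,k}(x)\sqrt{1-x^2}$ in Chebyshev polynomials via partial fractions, invokes the generating-function identity $\log(1-2xt+t^2)=-2\sum_{n\ge 1}T_n(x)t^n/n$ together with orthogonality to turn $\int\log(\cdot)\,g_{d,k}$ into an explicit power series in a parameter $t$, and then sums the coefficients $\alpha_n$ in closed form using the identity $\sum_n\binom{2n+m}{n}z^n=(1-4z)^{-1/2}\bigl((1-\sqrt{1-4z})/2z\bigr)^m$. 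Your route sidesteps the infinite series entirely: the key observation is the pair of factorizations $t=|\sqrt{k-1}-\sqrt d\,e^{i\theta}|^2$ and $(d+1)k-t=|\sqrt{d(k-1)}+e^{i\theta}|^2$, which convert the integrand into a rational function of $z=e^{i\theta}$ times a single holomorphic $\operatorname{Log}$, so three residues finish the job. The two approaches are cousins (the Chebyshev expansion is the real shadow of the Laurent expansion in $z$), but yours is shorter and makes the algebraic structure more visible; the paper's has the minor advantage of staying within real analysis. Your attention to which poles fall inside the unit circle is exactly the right checkpoint: the conditions $k-1>d$ and $d(k-1)>1$ you isolate are the same ones the paper needs for $|r_1|,|r_2|<1$ in its series summation, so neither method gains or loses any range of $(d,k)$.
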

   

\subsection{Proof of Proposition \ref{prop:num_of_SST_1}}
	Recall that the spectrum of $\Delta_{d-1}^+(X)$ is contained in $[0,(d+1)k]$ for every $d$-complex whose degrees are uniformly bounded by $k$, c.f. \cite[Proposition 2.7(2)]{PR12}. Hence, for every $C\in (0,(d+1)k)$
\begin{equation}\label{eq:prop1}
	\int_{[C,\infty)}  \log(t)d\mu_{\Delta_{d-1}^+(X_i)}(t) = \int_{[C,(d+1)k]}  \log(t)d\mu_{\Delta_{d-1}^+(X_i)}(t) \,.
\end{equation}

If we assume in addition that $C\in (0,(\sqrt{k-1}-\sqrt{d})^2)$, then the function $\log t$ is continuous and bounded in $[C,(d+1)k]$ and in addition, by Theorem \ref{Rosen},
\[
	\mathrm{supp}(\mu_{T_{d,k}}) \subset [(\sqrt{k-1}-\sqrt{d})^2,(\sqrt{k-1}+\sqrt{d})^2]\subset [C,k(d+1)]\,.
\] 
Hence by Theorem \ref{thm:main_Thm2}
\begin{equation}\label{eq:prop2}
	\int_{[C,(d+1)k]}  \log(t)d\mu_{\Delta_{d-1}^+(X_i)}(t) \overset{i\rightarrow \infty}{\longrightarrow} \int_{[C,(d+1)k]}  \log(t)d\nu_{d,k}(t)=\int_{(0,\infty)}  \log(t)d\nu_{d,k}(t) = \overline{\xi}_{d,k}\,,
\end{equation}
in probability. 
   
Consequently, if we can find $C\in (0,(\sqrt{k-1}-\sqrt{d})^2)$ such that all non-trivial eigenvalues of $\Delta_{d-1}^+(X_i)$ are within $[C,\infty)$ for all sufficiently large $i$ $\BP$-almost surely, then  \eqref{eq:prop1} and \eqref{eq:prop2} would give
\[
	\int_{(0,\infty)}  \log(t)d\nu_{X_i}(t) \overset{i\rightarrow \infty}{\longrightarrow} \int_{(0,\infty)}  \log(t)d\nu_{T_{d,k}}(t) = \overline{\xi}_{d,k}\,,	
\]
and in addition we would get $\pi_d(X_i)>0$ for all sufficiently large $i$, thus completing the proof of Proposition \ref{prop:num_of_SST_1}. 

In order to prove the above, we first recall the following result by Abu-Fraiha and Meshulam.
\begin{Theorem}[\cite{abu2017homol}]\label{spec-gap-prob}
   	Let $(X_i)_{i=1}^\infty$ be a sequence of $d$-dimensional, $k$-regular uniform random Steiner complexes on $n_i$ vertices, with $(n_i)$ a sequence of $d$-admissible numbers satisfying $n_i\overset{i\rightarrow \infty}{\longrightarrow} \infty$. For $\varepsilon> 0$, define the event 
\[ 
   	E_i^{\varepsilon}:= \big\{ \text{all non-trivial eigenvalues of } A_{X_i}\; \text{are contained in } (-\infty,2d\sqrt{k-1}+\varepsilon]\big \}^c. 
\]
   	If $k>(d+1)^2+1$, then $\mathbb{P}\big(  E_i^{\varepsilon}  \; \text{eventually} \big)=1.$ 
   \end{Theorem}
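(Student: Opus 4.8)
This statement is \emph{not} a consequence of Theorem~\ref{thm:main_Thm2}: weak convergence of the empirical spectral distribution controls only the bulk and says nothing about a sublinear number of outlying eigenvalues. It is, rather, a high-dimensional analogue of Friedman's theorem on the second eigenvalue of a random regular graph --- for $d=1$ one has $2d\sqrt{k-1}=2\sqrt{k-1}=\max_{x\in J_{1,k}}|x|$, so the statement is exactly Friedman's theorem for $\CG_{n,k}$ --- and the plan is to prove it by the trace (moment) method with the number of moments allowed to grow logarithmically in $n_i$. Since $X_i$ is $k$-regular we have $A_{X_i}=k\,\mathrm{Id}-\Delta_{d-1}^{+}(X_i)$, hence the coboundary space $B^{d-1}(X_i)=\{dg:g\in\Omega^{d-2}(X_i)\}$ is $A_{X_i}$-invariant with $A_{X_i}|_{B^{d-1}(X_i)}=k\,\mathrm{Id}$; it is spanned by the trivial eigenvectors and, depending only on the complete $(d-1)$-skeleton, has the deterministic dimension $\binom{n_i-1}{d-1}=\frac{d}{n_i}\binom{n_i}{d}$. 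The non-trivial eigenvalues of $A_{X_i}$ are precisely the eigenvalues of the restriction of $A_{X_i}$ to $W_i:=B^{d-1}(X_i)^{\perp}$, so for every even $m=2\ell$ and every $t>0$,
\[
	\BP\big(A_{X_i}\ \text{has a non-trivial eigenvalue}\ >t\big)\ \le\ t^{-2\ell}\,\BE\big[\mathrm{tr}(A_{X_i}^{2\ell}|_{W_i})\big]\ =\ t^{-2\ell}\Big(\BE[\mathrm{tr}(A_{X_i}^{2\ell})]-k^{2\ell}\binom{n_i-1}{d-1}\Big),
\]
and one takes $t=2d\sqrt{k-1}+\varepsilon$.

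The heart of the matter is to estimate $\BE[\mathrm{tr}(A_{X_i}^{2\ell})]$. By Proposition~\ref{powers-empirical-dist} and the invariance of the model under relabelling of vertices, $\BE[\mathrm{tr}(A_{X_i}^{2\ell})]=\binom{n_i}{d}\,\BE\big[\phi_{2\ell}(X_i;\sigma_\ast,\sigma_\ast)-\phi_{2\ell}(X_i;\sigma_\ast,\overline{\sigma_\ast})\big]$ for a fixed oriented $(d-1)$-face $\sigma_\ast$, so it suffices to control the (signed) number of closed length-$2\ell$ walks in $\overrightarrow{G}_d(X_i)$ issued from $\sigma_\ast$. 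I would classify these walks by the isomorphism type of the sub-complex they traverse --- equivalently by the ``shape'' of the walk together with the intersection pattern of the $d$-faces it uses. Walks whose traversed sub-complex is \emph{arboreal} (its $d$-faces form a tree) are dominant: by the transitivity and recursive structure of $T_{d,k}$ together with Theorem~\ref{Rosen}, their total contribution per starting face equals $\langle A_{T_{d,k}}^{2\ell}\one_{\sigma_0},\one_{\sigma_0}\rangle=\langle\mu_{d,k},x^{2\ell}\rangle\le\rho^{2\ell}$ where $\rho:=\max_{x\in J_{d,k}}|x|=d-1+2\sqrt{d(k-1)}$, and an elementary inequality gives $\rho\le 2d\sqrt{k-1}$ (with equality iff $d=1$). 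Walks whose traversed sub-complex contains a cycle (``tangled'' walks, of excess $\mu\ge1$) must be shown to be of lower order: the quantitative inputs here are Proposition~\ref{Ten-Prop1}, Proposition~\ref{Ten-Prop3} and their higher-order analogues, bounding by $O(n^{-|F|})$ the probability that a prescribed family $F$ of $d$-faces with a given intersection pattern all lie in $\bigcup_{j=1}^{k}S_j$; this produces a gain of $n_i^{-\mu}$ for excess-$\mu$ walks, and this gain survives because the number of tangled shapes of length $2\ell$ and excess $\mu$ is only $C^{\ell}\ell^{O(\mu)}$. Summing over $\mu\ge1$ yields, for $\ell\le c(d,k)\log n_i$, an estimate of the form $\BE[\mathrm{tr}(A_{X_i}^{2\ell})]\le \binom{n_i}{d}\rho^{2\ell}(1+o(1))+n_i^{-1}\binom{n_i}{d}(2d\sqrt{k-1})^{2\ell}\mathrm{poly}(\ell)$, and since also $k^{2\ell}\binom{n_i-1}{d-1}=o(1)\binom{n_i}{d}(2d\sqrt{k-1})^{2\ell}$, feeding this into the Markov bound and choosing $\ell=\ell(i)\asymp\log n_i$ with the implicit constant tuned so that the non-arboreal remainder stays subdominant \emph{and} $\binom{n_i}{d}\big(\rho/(2d\sqrt{k-1}+\varepsilon)\big)^{2\ell(i)}$ is summable in $i$ (using that $d$-admissible numbers have positive lower density, so $n_i\gtrsim i$), Borel--Cantelli shows that, $\BP$-almost surely, all non-trivial eigenvalues of $A_{X_i}$ lie in $(-\infty,2d\sqrt{k-1}+\varepsilon]$ for all sufficiently large $i$.

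The main obstacle is the tangled-walk estimate together with the bookkeeping of how every constant depends on $\ell$: one must check that, after the $n^{-\mu}$ gain, the effective exponential growth rate of the expected contribution of excess-$\mu$ walks remains below $2d\sqrt{k-1}$, and that the polylogarithmic losses accumulated over $\ell\asymp\log n_i$ steps do not consume the gap $2d\sqrt{k-1}-\rho$ --- a gap that is empty precisely when $d=1$, forcing one to use the extra room $\varepsilon$ in that case. Reconciling these two competing constraints on the admissible growth of $\ell$ is exactly where the hypothesis $k>(d+1)^2+1$ is used. This is the simplicial counterpart of the Broder--Shamir / Friedman--Kahn--Szemer\'edi path-counting argument and is carried out in \cite{abu2017homol}; alternatively, for the base case $d=1$ one may simply invoke Friedman's theorem.
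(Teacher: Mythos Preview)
Your approach is genuinely different from the paper's, and considerably more ambitious. The paper does \emph{not} run a trace/moment method on $A_{X_i}$ at all. Instead it exploits the key structural fact that, for a random Steiner complex, the link of every $(d-2)$-face $\sigma$ is a random $k$-regular graph on $n_i-d+1$ vertices distributed exactly according to the matching model $\CG_{n_i-d+1,k}$. Friedman's theorem then gives $\BP(\lambda_2(\mathrm{lk}(X_i,\sigma))>2\sqrt{k-1}+\varepsilon)\le C_{k,\varepsilon}\,n_i^{-(\lceil\sqrt{k-1}\rceil-1)}$; a union bound over the $\binom{n_i}{d-1}$ links costs a factor $n_i^{d-1}$; and Garland's local-to-global method converts the uniform link bound $\lambda_2(\mathrm{lk})\le 2\sqrt{k-1}+\varepsilon$ into the global bound on non-trivial eigenvalues of $A_{X_i}$ by $2d\sqrt{k-1}+d\varepsilon$. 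Borel--Cantelli finishes. The threshold $k>(d+1)^2+1$ is then completely transparent: it is exactly the condition $\lceil\sqrt{k-1}\rceil-1>d$ needed for the union bound to be summable.

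By contrast, your sketch proposes to redo a Friedman-type path-counting analysis directly on the oriented line graph of $X_i$. This is not wrong in spirit, but two points deserve flagging. First, you assume $A_{X_i}=k\,\mathrm{Id}-\Delta_{d-1}^+(X_i)$ and that $B^{d-1}(X_i)$ is an $A_{X_i}$-eigenspace with eigenvalue $k$; this requires $X_i$ to be genuinely $k$-regular, which it is not in general (the $k$ Steiner systems may overlap), so the clean subtraction of $k^{2\ell}\binom{n_i-1}{d-1}$ is not justified as written. Second, and more seriously, your assertion that ``reconciling these two competing constraints on the admissible growth of $\ell$ is exactly where the hypothesis $k>(d+1)^2+1$ is used'' is not substantiated: nothing in your tangled-walk bookkeeping visibly produces the specific quantity $(d+1)^2+1$, whereas in the paper's argument it drops out immediately from Friedman's exponent $\tau(k)=\lceil\sqrt{k-1}\rceil-1$ versus the $n^{d-1}$ links. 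If a direct high-dimensional trace method were carried through, one would expect it to yield its own threshold on $k$, not necessarily this one. The paper's route buys you Friedman's theorem as a black box and a two-line derivation of the threshold; your route would, if completed, amount to a self-contained high-dimensional Friedman theorem, a substantially harder result than what is being claimed here.
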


Since the statement provided above for Abu-Fraiha's result is slightly different than the one stated in \cite{abu2017homol}, we provide a revised proof in the Appendix.

Theorem \ref{spec-gap-prob} provides us with an upper bound on the eigenvalues of the adjacency matrix of $X_i$ for all sufficiently large $i$, while we are interested in a lower bound on the eigenvalues of $\Delta_{d-1}^+(X_i)$. These operators are related to one another via the relation $\Delta_{d-1}^+(X_i)=D_{X_i}-A_{X_i}$, where $D_{X_i}$ is the degree operator defined by $D_{X_i}f(\sigma)=\deg_{X_i}(\sigma)f(\sigma)$ for all $f\in \Omega^{d-1}(X_i)$. If we were to know that $X_i$ is $k$-regular, then $D_{X_i}=k\mathrm{Id}$ and the relation between the eigenvalues would have been trivial. However, in general it is not true that the resulting complex $X_i$ is $k$-regular. That being said, the following claim shows that with high probability the degrees are between $k-d-1$ and $k$. 

\begin{Claim}
	Let $X$ be a $(d,k,n)$-uniform random Steiner complex composed of the independent random $(n,d)$-Steiner systems $S_1,\ldots,S_k$ and assume that $k\geq d+1$. Then there exists $C_{d,k}\in (0,\infty)$ such that for every $1\leq j\leq k-1$
	\[
		\BP\big(\exists \sigma\in X^{d-1} \text{ such that }\deg(\sigma)\leq k-j\big) \leq \frac{C_{d,k}}{n^{j-d}}\,.
	\]	
\end{Claim}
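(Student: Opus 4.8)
The plan is to bound the probability that some $(d-1)$-face has degree at most $k-j$ by a union bound over $(d-1)$-faces $\sigma$ and over choices of which $j$ of the $k$ Steiner systems $S_1,\dots,S_k$ fail to contribute a ``new'' $d$-face at $\sigma$. More precisely, since each $S_m$ contains exactly one $d$-face $\tau_m(\sigma)$ containing $\sigma$, the degree $\deg_X(\sigma)$ equals the number of distinct faces among $\tau_1(\sigma),\dots,\tau_k(\sigma)$. Thus $\deg_X(\sigma)\le k-j$ forces at least $j$ ``coincidences'', and in particular one can extract a subset $M\subseteq\{1,\dots,k\}$ of size $j+1$ together with a single $d$-face $\tau\supset\sigma$ such that $\tau_m(\sigma)=\tau$ for all $m\in M$ — indeed, if the multiset $\{\tau_1(\sigma),\dots,\tau_k(\sigma)\}$ has at most $k-j$ distinct values, some value is attained at least $\lceil k/(k-j)\rceil\ge 2$ times, and iterating / pigeonholing more carefully gives a value attained $\ge j+1$ times (or, more simply, a collection of coincidences totalling $j$, from which one picks the largest block, which has size $\ge 1+j/(k-1)$; to get size exactly $j+1$ one should instead sum over all ways the deficiency can be realized). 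To keep the argument clean I would phrase it as: $\{\deg(\sigma)\le k-j\}$ implies there exist $1\le m_0<m_1<\dots<m_j\le k$ and $\tau\in\binom{[n]}{d+1}$ with $\sigma\subset\tau$ and $\tau\in S_{m_0}\cap S_{m_1}\cap\dots\cap S_{m_j}$.

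Granting that combinatorial reduction, the estimate is then routine. First I would fix $\sigma\in\binom{[n]}{d}$ and, for a fixed choice of indices $m_0<\dots<m_j$, bound
\[
	\BP\Big(\exists\,\tau\supset\sigma:\ \tau\in S_{m_0}\cap\dots\cap S_{m_j}\Big)\ \le\ \sum_{\sigma\subset\tau\in\binom{[n]}{d+1}}\prod_{i=0}^{j}\BP(\tau\in S_{m_i})\,,
\]
using independence of the $S_m$. By Proposition \ref{Ten-Prop1}, $\BP(\tau\in S_{m_i})=\frac{1}{n-d}\le \frac{C_d}{n}$, so each summand is at most $(C_d/n)^{j+1}$, and since there are exactly $n-d\le n$ faces $\tau\in\binom{[n]}{d+1}$ containing $\sigma$, the displayed probability is at most $n\cdot (C_d/n)^{j+1}=C_d^{\,j+1} n^{-j}$. (Alternatively one keeps one factor as $\BP(\tau\in S_{m_0})$ and uses $\sum_{\tau\supset\sigma}\BP(\tau\in S_{m_0})=1$ to save the final power of $n$, exactly as in the proof of the $r=1$ case of Theorem \ref{Ten-Thm3}.) Then I would union-bound over the $\binom{k}{j+1}$ choices of index sets and over the $\binom{n}{d}\le C_d n^d$ choices of $\sigma$, obtaining
\[
	\BP\big(\exists\sigma\in X^{d-1}:\deg(\sigma)\le k-j\big)\ \le\ \binom{n}{d}\binom{k}{j+1} C_d^{\,j+1}\, n^{-j}\ \le\ \frac{C_{d,k}}{n^{\,j-d}}\,.
\]
To get the stated exponent $j-d+1$ rather than $j-d$, I would use the refined bound (saving one power of $n$ via $\sum_{\tau\supset\sigma}\BP(\tau\in S_{m_0})=1$ as noted), which replaces $n^{-j}$ by $n^{-(j+1)}$ and hence yields $C_{d,k}n^{-(j-d+1)}$.

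The main obstacle is the combinatorial reduction in the first paragraph: turning the event ``at most $k-j$ distinct values among $\tau_1(\sigma),\dots,\tau_k(\sigma)$'' into a clean statement of the form ``$j+1$ of the systems share a common $d$-face''. The naive pigeonhole only guarantees a block of size roughly $1+j/(k-1)$, which is far weaker. The right move is to not insist on a single large block but instead to observe that a degree deficiency of $j$ means the sum over distinct values $\tau$ of $(\#\{m:\tau_m(\sigma)=\tau\}-1)$ equals $j$, so one can choose, for each such $\tau$, a distinguished index plus the list of ``extra'' indices, and encode the whole configuration; bounding the number of such encodings and applying independence gives a factor $n^{-j}$ overall (one power of $n^{-1}$ per unit of deficiency, since each extra incidence $\tau\in S_m$ costs $\BP(\tau\in S_m)\le C_d/n$ and the $\tau$'s are determined by $\sigma$ up to $\le n$ choices total). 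Once this bookkeeping is set up correctly, the probabilistic part is immediate from Proposition \ref{Ten-Prop1} and independence; I would take care to track the powers of $n$ so that the $d$ choices of vertices enlarging $\sigma$ to the various $\tau$'s, together with the $\binom{n}{d}$ choices of $\sigma$, are dominated by the $n^{-(j+1)}$ (or $n^{-j}$) decay coming from the $j{+}1$ (resp.\ $j$) independent incidences, leaving the claimed $n^{-(j-d+1)}$.
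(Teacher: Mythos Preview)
Your proposal contains a genuine gap precisely where you flag it, and your attempted repair does not work. The reduction ``$\deg(\sigma)\le k-j$ implies some $j{+}1$ of the $S_m$ share a common $d$-face through $\sigma$'' is false, as you note; pigeonhole only guarantees a block of size $\lceil k/(k-j)\rceil$. More importantly, your claim that the refinement via $\sum_{\tau\supset\sigma}\BP(\tau\in S_{m_0})=1$ upgrades the per-$\sigma$ bound from $n^{-j}$ to $n^{-(j+1)}$ is wrong: with $j{+}1$ independent incidences and one of them summed out, exactly $j$ factors of order $n^{-1}$ remain, so you get $(C_d/n)^j$, the same $n^{-j}$ you already had. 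There is no hidden extra power of $n$, and the ``encoding'' you sketch at the end (one factor of $n^{-1}$ per unit of deficiency) likewise yields $n^{-j}$ per $\sigma$. After the union bound over $\binom{n}{d}$ faces your argument therefore delivers $C_{d,k}n^{-(j-d)}$, not $C_{d,k}n^{-(j-d+1)}$.

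The paper's proof is exactly the partition/encoding argument you outline at the end, carried out directly: for fixed $\sigma$ it writes $\BP(\deg\sigma\le k-j)=\sum_{i\le k-j}\BP(|\{\tau_1^\sigma,\dots,\tau_k^\sigma\}|=i)$, expands the $i$-th term over ordered set partitions $B_1,\dots,B_i$ of $[k]$ and distinct vertices $v_1,\dots,v_i$, and bounds each summand by $\prod_{m}\prod_{\ell\in B_m}\BP(v_m\sigma\in S_\ell)$ via independence. Since $\sum_m|B_m|=k$, this product is $(C_d/n)^k$ (the paper writes $(C_d/n)^{k+1}$, an apparent slip), giving $\BP(\deg\sigma\le k-j)\le C_{d,k}n^{-j}$ and hence $C_{d,k}n^{-(j-d)}$ after the union bound. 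In fact $\BP(\deg\sigma\le k-j)\ge\BP(\tau_1^\sigma=\cdots=\tau_{j+1}^\sigma)=(n-d)^{-j}$, so the per-$\sigma$ exponent $j$ is sharp and neither your argument nor the paper's can reach $j-d+1$; the weaker exponent $j-d$ already suffices for the only application (Corollary~\ref{cor:Steiner_deg}), since with $j=d+2$ one still gets a summable $C_{d,k}n^{-2}$.
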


Note that the inequality yields a trivial bound whenever $j\leq d$.

\begin{proof}
	By a union bound, it suffices to prove that $\BP(\deg(\sigma)\leq k-j)\leq C_{d,k}n^{-j}$ for any $(d-1)$-face $\sigma$. To this end fix $\sigma\in X^{d-1}$ and for $1\leq i\leq k$ denote by $\tau^\sigma_i$ the unique $d$-face in $S_i$ containing $\sigma$. Then 
\[
	\BP(\deg(\sigma)\leq k-j) = \BP(|\{\tau^\sigma_{1},\ldots,\tau^\sigma_{k}\}|\leq  k-j) = \sum_{i=1}^{k-j}\BP(|\{\tau^\sigma_1,\ldots,\tau^\sigma_k\}|=i)\,.
\]
For each $1\leq i\leq k-j$, the probability $\BP(|\{\tau^\sigma_1,\ldots,\tau^\sigma_k\}|=i)$, can be written more explicitly as 
\[
	\BP(|\{\tau^\sigma_1,\ldots,\tau^\sigma_k\}|=i) = \sum_{\substack{B_1,\ldots,B_i\subset [k] \\ \uplus_{m=1}^i B_m =[k]\\ B_m\neq\emptyset ~\forall 1\leq m\leq i}}\sum_{\substack{v_1,\ldots,v_i\in [n]\setminus \sigma \\ \text{distinct}}}\BP(\tau_j^\sigma=v_m\sigma ~\forall 1\leq m\leq i~ \forall j\in B_m)\,.
\]
Noting that from the independence of the Steiner systems and Proposition \ref{Ten-Prop1}, for every partition $(B_1,\ldots,B_i)$ of $[k]$ into non-empty sets and every choice of $v_1,v_2,\ldots,v_i\in [n]\setminus \sigma$
\[
	\BP(\tau_j^\sigma=v_m\sigma ~\forall 1\leq m\leq i~ \forall j\in B_m) = \prod_{m=1}^i\prod_{j\in B_m}\BP(\tau_j^\sigma=v_m\sigma) \leq \prod_{m=1}^i\prod_{j\in B_m}\Big(\frac{C_d}{n}\Big) = \Big(\frac{C_d}{n}\Big)^{k}\,,
\]
it follows that 
\[
\begin{aligned}
	& \BP(|\{\tau_1^\sigma,\ldots,\tau_k^\sigma\}|=i)\\
	=&\sum_{\substack{B_1,\ldots,B_i\subset [k] \\ \uplus_{m=1}^i B_m =[k]\\ B_m\neq\emptyset ~\forall 1\leq m\leq i}}\sum_{\substack{v_1,\ldots,v_i\in [n]\setminus \sigma \\ \text{ distinct}}}\BP(\tau_j=v_m\sigma~\forall 1\leq m\leq i~\forall j\in B_m) \leq i^k(n-d)^i\Big(\frac{C_d}{n}\Big)^{k}\,.
\end{aligned}
\]
Summing over $i$ from $1$ to $k-j$ gives
\[
	\BP(|\{\tau_1^\sigma,\ldots,\tau_k^\sigma\}|\leq k-j) \leq \sum_{i=1}^{k-j} i^k(n-d)^i\Big(\frac{C_d}{n}\Big)^{k}\leq k^{k+1}n^{k-j}\Big(\frac{C_d}{n}\Big)^{k}= \frac{C_{d,k}}{n^{j}}\,,
\] 
as required. 
\end{proof}

The last claim together with Borel-Cantelli, immediately gives
\begin{Cor}\label{cor:Steiner_deg}
	Let $(X_i)_{i\geq 1}$ be a sequence of $(d,k,n_i)$-uniform random Steiner complexes with $(n_i)$ a sequence of $d$-admissible numbers such that $\lim_{i\to\infty}n_i=\infty$. Then 
\[
	\BP(\exists \sigma\in X_i^{d-1} ~~\deg(\sigma)< k-d-1  \text{ for infinitely many }i)=0\,.
\]
\end{Cor}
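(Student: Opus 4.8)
The plan is to obtain the corollary from the preceding Claim by the first Borel--Cantelli lemma, so the only thing to pin down is which value of the parameter $j$ to feed into the Claim. Since degrees are non-negative integers, the event $\{\exists\,\sigma\in X_i^{d-1}:\deg(\sigma)<k-d-1\}$ is exactly $\{\exists\,\sigma\in X_i^{d-1}:\deg(\sigma)\le k-(d+2)\}$, so I would apply the Claim with $j=d+2$. This choice lies in the admissible range $1\le j\le k-1$ because the standing hypothesis $k>4d^2+d+2$ forces $k-1\ge d+2$. The Claim then furnishes a constant $C_{d,k}\in(0,\infty)$ with
\[
	\BP\big(\exists\,\sigma\in X_i^{d-1}:\deg(\sigma)<k-d-1\big)\;\le\;\frac{C_{d,k}}{n_i^{(d+2)-d+1}}\;=\;\frac{C_{d,k}}{n_i^{3}}
\]
for every $i\ge 1$.

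Next I would sum over $i$: using $n_i\to\infty$ one gets $\sum_{i\ge 1}\BP(\exists\,\sigma\in X_i^{d-1}:\deg(\sigma)<k-d-1)\le C_{d,k}\sum_{i\ge 1}n_i^{-3}<\infty$. Here the summability of $n_i^{-3}$ is immediate once one notes that the $d$-admissible numbers form a set of positive density in $\BN$, so a sequence of such numbers tending to infinity has $n_i$ growing at least linearly in $i$ (if one wishes to allow repetitions in $(n_i)$ one simply restricts to the distinct values, which only strengthens the conclusion). The first Borel--Cantelli lemma --- which requires no independence among the $X_i$ --- then gives that, $\BP$-almost surely, at most finitely many indices $i$ satisfy $\exists\,\sigma\in X_i^{d-1}:\deg(\sigma)<k-d-1$; that is, the event that this occurs for infinitely many $i$ has probability $0$, which is precisely the assertion of the corollary.

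I do not anticipate any real obstacle in this argument: all of the combinatorial content has already been absorbed into the Claim (whose proof rests on Proposition \ref{Ten-Prop1} and a union bound over set partitions of $[k]$), and what remains here is only the bookkeeping choice $j=d+2$ followed by a textbook Borel--Cantelli step. The one mildly delicate point is the summability of $\sum_i n_i^{-3}$ discussed above, and even that is inessential for the paper's end goal, since Theorem \ref{thm:main_Thm1} is an in-probability statement for which a version of this corollary with ``probability tending to $1$'' would already suffice.
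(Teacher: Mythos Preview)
Your approach matches the paper's exactly: it writes only ``The last claim together with Borel--Cantelli, immediately gives'' the corollary, and your choice $j=d+2$ with the resulting $C_{d,k}/n_i^{3}$ bound is precisely the intended reading. The one loose end---your claim that $n_i\to\infty$ forces at-least-linear growth of $n_i$---is not correct as stated (the hypotheses allow slow growth and repetitions, so $\sum_i n_i^{-3}<\infty$ is not automatic), but the paper does not address summability either, and as you rightly observe, the in-probability conclusion of Theorem~\ref{thm:main_Thm1} needs only the high-probability version anyway.
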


Combining Theorem \ref{spec-gap-prob} and Corollary \ref{cor:Steiner_deg} we conclude that for every $\varepsilon>0$, the event 
\[
	B_\varepsilon=\Bigg\{\begin{array}{c}
		\text{For sufficiently large }i \text{ all non-trivial eigenvalues of the adjacency operator} A_{X_i} 
		\text{ are}\vspace{-0.25cm}\\\text{within }(-\infty,2d\sqrt{k-1}+\varepsilon] \text{ and }\deg(\sigma) \text{ is between } k-d-1 \text{ and } k \text{ for all } \sigma\in X_i^{d-1}
	\end{array}\Bigg\}\,,
\]
has probability $1$ provided $k> (d+1)^2+1$. 

Finally, note that on the event $B_\varepsilon$, all the eigenvalues of the operator $k\mathrm{Id}-A_{X_i}$ are within $(k-2d\sqrt{k-1}-\varepsilon,\infty)$. Furthermore, the norm of the difference between the operators $k\mathrm{Id}-A_{X_i}$ and $\Delta_{d-1}^+(X_i)$, i.e. the norm of the operator $k\mathrm{Id}-D_{X_i}$ is bounded by $d+1$, since $k\mathrm{Id}-D_{X_i}$ is a diagonal operator with entries in $\{0,1,2,\ldots,d+1\}$. Hence, by Weyl's inequalities (c.f. \cite{Tao10}), we conclude that for all large enough $i$ all non-trivial eigenvalues of $\Delta_{d-1}^+(X_i)$ are within $[k-2d\sqrt{k-1}-d-1-\varepsilon,\infty)$. Finally, since $k-2d\sqrt{k-1}-d-1>0$, whenever 
\[
	k>2d^2\Big(1+\sqrt{1+\frac{1}{d}}\Big)+d+1\geq 4d^2+d+2\,,
\]
the result follows by taking $C\in (0,k-2d\sqrt{k-1}-d-1)$. \hfill\qed


\subsection{Chebyshev polynomials} \label{Cheby}
	The proof of Proposition \ref{prop:num_of_SST_2} is based on Chebyshev's polynomials, whose definition and helpful properties are summarized in this subsection. 
	
\begin{defn} Chebyshev polynomials of the first kind are defined as the unique sequence of polynomials $(T_n)_{n=0}^\infty$ satisfying $\deg(T_n)=n$ for all $n\in \mathbb{N}_0$ and $T_n\circ \cos(x)=\cos(nx)$ for all $n\in \mathbb{N}_0$.
\end{defn} 

Chebyshev's polynomials are classical and well-studied, c.f. \cite{MH03}. Below we collect several useful properties they possess. 

{\bf Orthogonality} 
   \begin{equation}\label{Cheb-property1}
  	 \int_{-1}^{1} \frac{T_n(x)T_m(x)}{\sqrt{1-x^2}}dx= \begin{cases}
		   0 & \text{if } n\neq m\\
		   \pi & \text{if }n,m=0 \\
		   \frac{\pi}{2} & \text{if }n=m\neq 0
	   \end{cases}\,.
   \end{equation}

{\bf Logarithmic generating function} For all $|t|<1$ and $x\in\BR$ 
   \begin{equation}\label{Cheb-property2}
	   \log(1-2xt+t^2)=-2\sum\limits_{n=1}^\infty T_n(x)\cdot \frac{t^n}{n}\,.
   \end{equation}

{\bf Expansion of powers via Chebyshev's polynomials} For every $n\geq 0$
\begin{equation}\label{Cheb-property3}
	x^{2n+1}= 2^{-2n}\sum\limits_{m=0}^n \binom{2n+1}{n-m} T_{2m+1}(x)
\end{equation}
and 
\begin{equation}\label{Cheb-property4}
	x^{2n}=2^{1-2n} \sum\limits_{m=1}^n \binom{2n}{n-m} T_{2m}(x)+ 2^{-2n} \binom{2n}{n}\,. 
\end{equation}
Hence for every converging power series 
\begin{equation}\label{Cheby3}  
\begin{aligned} 
   	\sum_{n=0}^\infty c_n x^n & = \Bigg( \sum\limits_{n=0}^\infty \frac{c_{2n}}{2^{2n}} \binom{2n}{n} \Bigg) T_0(x)\\ &+ \sum\limits_{m=0}^\infty \Bigg( \sum\limits_{n=m}^\infty \frac{c_{2n+1}}{2^{2n}} \binom{2n+1}{n-m}  \Bigg) T_{2m+1}(x)+ \sum\limits_{m=1}^\infty 2\Bigg( \sum\limits_{n=m}^\infty \frac{c_{2n}}{2^{2n}} \binom{2n}{n-m}  \Bigg) T_{2m}(x)\,. 
\end{aligned}
\end{equation}

	Let $g:[-1,1]\to \BR$ be a continuous function. Denoting $h(x)=g(x)\sqrt{1-x^2}$, the orthogonality property \eqref{Cheb-property1} with respect to the function $(1-x^2)^{-1/2}$, enables us to develop $h$ as a power series in Chebyshev polynomials 
   \[ 
   		h(x)=\sum\limits_{n=0}^\infty \alpha_n T_n(x)\,, 
   	\]
where $\alpha_n$ is given by
   \[ 
   \alpha_n= \begin{cases}
	   \frac{1}{\pi} \int_{-1}^1 T_0(x) g(x) & , \text{if} \; n=0\\
	   &\\
	   \frac{2}{\pi} \int_{-1}^{1}T_n(x)g(x)dx & , \text{if } n\geq 1
	   \end{cases}\,.
	 \]

   In particular, if the Chebyshev power series of $h$ converges uniformly on $(-1,1)$ we get from  \eqref{Cheb-property2} and integration term by term that
   \begin{equation}\label{eq:Cheb_4}
    \begin{aligned}
    \int_{-1}^1 \log(1-2xt+t^2) \cdot g(x) dx&=\int_{-1}^1 \left(-2 \sum\limits_{n=1}^\infty T_n(x)\frac{t^n}{n}  \right)\cdot \frac{h(x)}{\sqrt{1-x^2}}dx \\
     &=-2 \sum\limits_{n=1}^\infty \frac{t^n}{n} \int_{-1}^{1} \frac{T_n(x)h(x)}{\sqrt{1-x^2}}dx
     =- \pi \sum\limits_{n=1}^\infty \frac{\alpha_n}{n}t^n.
    \end{aligned}
   \end{equation}


\subsection{Proof of Proposition \ref{prop:num_of_SST_2}}

Recall that 
\[
	\overline{\xi}_{d,k}=\int_{(0,\infty)}\log(t)d\nu_{d,k}(t)\,.
\]
By Theorem \ref{Rosen}, for $k\geq d+1$, we can rewrite the last expression as 
\[
	\overline{\xi}_{d,k} =\int_{I_{d,k}}  \log(t)\frac{k\sqrt{4(k-1)d-(k-1+d-t)^2}}{2\pi t((d+1)k-t)}dt\,,
\]
where
\[ 
	I_{d,k}=\big[(\sqrt{k-1}-\sqrt{d})^2,(\sqrt{k-1}+\sqrt{d})^2]\,.
\]

Denoting $\omega:=2\sqrt{d(k-1)}$ and using the change of variables $x=(k-1+d-t)/\omega$ gives 
\[
	\overline{\xi}_{d,k} = \int_{-1}^1 \log(k-1+d-\omega x)\frac{k\omega^2\sqrt{1-x^2}}{2\pi (k-1+d-\omega x)(d(k-1)+1+\omega x)}dx\,.
\]

Defining $g_{d,k}:[-1,1]\to \BR$ by 
\[ 
	g_{d,k}(x)=  \frac{k\omega^2\sqrt{1-x^2} }{2\pi(k-1+d-\omega x)(d(k-1)+1+\omega x)}\,,
\]  
and noting that $\int_{-1}^1 g_{d,k}(x)dx=1$, for $k\geq d+1$, we can write 
\begin{equation}\label{eq:computation1}
\begin{aligned}
	\overline{\xi}_{d,k} & = \int_{-1}^1 \log(k-1+d-\omega x)g_{d,k}(x)dx \\
	& = \int_{-1}^1 \Big[\log(k-1+d) +\log\Big(1-\frac{\omega x}{k-1+d}\Big)\Big]g_{d,k}(x)dx\\
	& = \log(k-1+d) +\int_{-1}^1 \log\Big(1-\frac{\omega x}{k-1+d}\Big) g_{d,k}(x)dx\,.
\end{aligned}	
\end{equation}

	The remaining integral is computed using Chebyshev polynomials. Denoting $h_{d,k}(x) = g_{d,k}(x)\cdot \sqrt{1-x^2}$ and assuming that $h_{d,k}(x)=\sum_{n=0}^\infty \alpha_nT_n(x)$, we conclude from \eqref{eq:Cheb_4} that for all $|t|<1$
\begin{equation} \label{eq:computation2}
	\int_{-1}^1 \log(1-2xt+t^2) \cdot g_{d,k}(x)dx =- \pi \sum\limits_{n=1}^\infty \frac{\alpha_n}{n}t^n\,.
\end{equation}

Furthermore, since 
\[
	\log(1-2xt+t^2)=\log(1+t^2) + \log\Big(1-\frac{2t}{1+t^2}x\Big)\,,
\]
and recalling that in our case $\int_{-1}^1g_{d,k}(x)dx=1$, we conclude that 
\begin{equation}\label{eq:computation3}
	\int_{-1}^1 \log(1-2xt+t^2) \cdot g_{d,k}(x)dx = \log(1+t^2) + \int_{-1}^1 \log\Big(1-\frac{2t}{1+t^2}x\Big) \cdot g_{d,k}(x)dx\,.
\end{equation}

Combining \eqref{eq:computation2} and \eqref{eq:computation3} and taking $|t|<1$ such that $\frac{\omega}{k-1+d}= \frac{2t}{1+t^2}$, namely 
\begin{equation} \label{t-def}
	t= \frac{ 1-\sqrt{1-(\frac{\omega}{k-1+d})^2} }{\frac{\omega}{k-1+d}}\,,
\end{equation}
we conclude that 
\begin{equation}\label{eq:computation4}
	\overline{\xi}_{d,k} = \log(k-1+d)-\log(1+t^2) -\pi \sum_{n=1}^\infty \frac{\alpha_n}{n}t^n\,.
\end{equation}
Hence, all that remains is to find the coefficients $\alpha_n$ in the Chebyshev expansion of $h_{d,k}$. To this end, note that partial fractions and Taylor expansion for the function $\frac{1}{1+ \alpha x}$ for $\alpha\in\BR$, gives for all $|x|<\max\{\frac{k-1+d}{\omega},\frac{d(k-1)+1}{\omega}\}$
\begin{equation*}
\begin{aligned}
	h_{d,k}(x)& =\frac{k\omega^2(1-x^2)}{2\pi(k-1+d-\omega x)(d(k-1)+1+\omega x)} \\
	&= \frac{1}{2\pi(d+1)}\cdot \frac{1}{\Fe_1\Fe_2} + \frac{1}{2\pi(d+1)}\cdot \frac{\omega(\Fe_2-\Fe_1)}{\Fe_1^2\Fe_2^2}x\\
	&+\frac{1}{2\pi(d+1)} \sum\limits_{n=2}^\infty \Bigg(\frac{\omega^2-\Fe_1^2}{\Fe_1^2}\bigg(\frac{\omega}{\Fe_1}\bigg)^n+(-1)^n\frac{\omega^2-\Fe_2^2}{\Fe_2}\bigg(\frac{\omega}{\Fe_2}\bigg)^n\Bigg)x^n\,,
\end{aligned}
\end{equation*}
where we introduced the notation 
\begin{equation}
	\mathfrak{e}_1:=k-1+d \qquad \text{and} \qquad \mathfrak{e}_2:=d(k-1)+1 \,.
\end{equation}
Also, note that for $k,d\in\BN$ such that $k\geq d+1$, we have $\max\{\frac{k-1+d}{\omega},\frac{d(k-1)+1}{\omega}\}>1$ and thus the expansion is valid for all $x\in [-1,1]$. 
  
Using \eqref{Cheby3} to transform the power series into a Chebyshev series, we conclude that for $m\geq 0$
\begin{equation*}
\begin{aligned}
	\alpha_{2m+1}= \frac{1}{2\pi(d+1)}\sum\limits_{n=m}^\infty \Bigg( \frac{\omega^2-\Fe_1^2}{\Fe_1} \cdot \Big( \frac{\omega}{2\Fe_1} \Big)^{2n+1}- \frac{\omega^2-\Fe_2^2}{\Fe_2} \cdot \Big( \frac{\omega}{2\Fe_2} \Big)^{2n+1} \Bigg) \binom{2n+1}{n-m}\,,
   \end{aligned}
   \end{equation*}
and for $m\geq 1$
\begin{equation*}
\begin{aligned}
	\alpha_{2m}	= \frac{1}{2\pi(d+1)}\sum\limits_{n=m}^\infty \Bigg( \frac{\omega^2-\Fe_1^2}{\Fe_1} \cdot \Big( \frac{\omega}{2\Fe_1} \Big)^{2n}+ \frac{\omega^2-\Fe_2^2}{\Fe_2} \cdot \Big( \frac{\omega}{2\Fe_2} \Big)^{2n} \Bigg)\binom{2n}{n-m}\,.
\end{aligned}
\end{equation*}
  
Next, using the identity, c.f. \cite[Chapter 2.5]{Wi06},
\[ 
	\frac{1}{ \sqrt{1-4z} } \Big( \frac{1-\sqrt{1-4z}}{2z}  \Big)^k= \sum\limits_{n=0}^\infty \binom{2n+k}{n} z^n\,,
\]
and the abbreviation 
\[
	r_1 = \frac{\Fe_1-\sqrt{\Fe_1^2-\omega^2}}{\omega}=\frac{2d}{\omega} \qquad\text{and}\qquad r_2 = \frac{\Fe_2-\sqrt{\Fe_2^2-\omega^2}}{\omega}=\frac{2}{\omega}\,,
\]
we conclude that for $m\geq 0$, 
\begin{equation}\label{eq:computation5}
	\alpha_{2m+1}= -\frac{\sqrt{\Fe_1^2-\omega^2}}{\pi(d+1)}r_1^{2m+1}+ \frac{\sqrt{\Fe_2^2-\omega^2}}{\pi(d+1)}r_2^{2m+1} \,,
\end{equation}
and for $m\geq 1$, 
\begin{equation}\label{eq:computation6}
	\alpha_{2m}= -\frac{\sqrt{\Fe_1^2-\omega^2}}{\pi(d+1)}r_1^{2m}- \frac{\sqrt{\Fe_2^2-\omega^2}}{\pi (d+1)}r_2^{2m}\,.
\end{equation}
 
Combining \eqref{eq:computation4}, \eqref{eq:computation5} and \eqref{eq:computation6} together with the fact that $\sum_{n=1}^\infty \frac{x^{2n}}{2n}= -\frac{1}{2}\log(1-x^2)$ and $\sum_{n=0}^\infty \frac{x^{2n+1}}{2n+1}=\frac{1}{2}\log\big(\frac{1+x}{1-x}\big)$ for $|x|<1$, we conclude that as long as $|r_1|,|r_2|<1$ (which is the case whenever $k\geq d+1$)
\[
\begin{aligned}
	\overline{\xi}_{d,k} &= \log(k-1+d)-\log(1+t^2) -\pi \sum_{n=1}^\infty \frac{\alpha_n}{n}t^n\\
	& = \log(k-1+d)-\log(1+t^2)-\frac{\sqrt{\Fe_1^2-\om^2}}{d+1}\log(1-r_1t) - \frac{\sqrt{\Fe_2^2-\om^2}}{d+1}\log(1+r_2t) \\
\end{aligned}
\] 
 
Plugging in the values  
\[
	\Fe_1=k-1+d,\quad \Fe_2=d(k-1)+1,\quad t=r_1=\frac{2d}{\omega},\quad r_2=\frac{2}{\omega},\quad \omega = 2\sqrt{(k-1)d}\,,
\] 
gives 
\[
	\overline{\xi}_{d,k} = \log\Bigg(\frac{(k-1)^{k-1}}{(k-1-d)^{\frac{k-1-d}{d+1}}k^{\frac{d(k-1)-1}{d+1}}}\Bigg)\,,
\]
as required. \hfill\qed


\section{Open problems and conjectures} \label{future}

\subsection{Sampling of $(n,d)$-Steiner complexes}
Following the discussion in Section \ref{sec:Results} we suggest the following:  
\begin{problem}
Find an (efficient) algorithm for sampling $(n,d)$-Steiner systems uniformly at random. 
\end{problem}

\subsection{Uniform random Steiner complexes and the matching model}
When sampling $k$ independent random matchings uniformly at random on $n$ vertices, it is known (see \cite{BC78}) that the probability for obtaining a simple graph converges to $something$ as $n$ tends to infinity, and that conditioned on obtaining a simple graph, the resulting distribution is uniform over all such graphs. 
\begin{question}
	Is there an analogue of the above result in higher-dimensions?
\end{question}

\subsection{Improving the regularity threshold}
	The regularity condition in Theorem \ref{thm:main_Thm1} requires that $k>k(d)\equiv 4d^2+d+2$, however this condition only arises from Theorem \ref{spec-gap-prob} which in turn follows from applying Garland's method. Except for the restriction arising from Theorem \ref{spec-gap-prob}, the only requirement is  that $k> d+1$. A natural question arises as to whether the threshold on $k$ is indeed strict, or whether a finer analysis would yield a better threshold.
	
	Since the number of $d$-faces in a $k$-regular $d$-complex $X$ on $n$ vertices is $\binom{k}{d+1}\binom{n}{d}$, it follows that for $k<d+1$, the number of $d$-faces is strictly smaller than the number of $(d-1)$-faces $\binom{n}{d}$. Hence, $\Delta_{d+1}^+(X)$ always have a non-trivial $0$ eigenvalue, and thus by the simplicial matrix tree theorem $\kappa_d(X)=0$. Consequently, the least lower bound on $k$ for Theorem \ref{thm:main_Thm1} is $d$. A similar, yet slightly more evolved argument would show  that in fact the least lower bound is $k=d+1$. This leads us to the following conjecture. 
     
\begin{conjecture}
	The condition on $k$ in Theorem \ref{thm:main_Thm1} can be improved to $k>d+1$. 
\end{conjecture}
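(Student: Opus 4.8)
The plan is to isolate the single point in Section~\ref{Weight-num} where the hypothesis $k>4d^2+d+2$ is actually used. Tracing the argument, the only input requiring more than $k>d+1$ is the appeal to Theorem~\ref{spec-gap-prob} inside the proof of Proposition~\ref{prop:num_of_SST_1}: there one needs a constant $C$ with $0<C<(\sqrt{k-1}-\sqrt d)^2$ such that $\BP$-almost surely, for all large $i$, every non-trivial eigenvalue of $\Delta_{d-1}^+(X_i)$ lies in $[C,\infty)$. This simultaneously gives $\pi_d(X_i)>0$ (hence the simplicial matrix-tree formula of Theorem~\ref{matrixtree-thm} is usable) and the continuity of $\log$ on the relevant compact interval, after which Theorem~\ref{thm:main_Thm2} and the Chebyshev computation of Proposition~\ref{prop:num_of_SST_2}—both valid for all $k\ge d+1$—finish the proof verbatim. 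Since the left edge $(\sqrt{k-1}-\sqrt d)^2$ of $\mathrm{supp}(\nu_{d,k})$ is positive exactly when $k>d+1$, and since $\xi_{d,k}$ is itself defined and positive exactly in this range, the conjecture is \emph{equivalent} to the assertion: for every $d$ and every $k>d+1$, the non-trivial eigenvalues of $\Delta_{d-1}^+(X_i)$ are, $\BP$-a.s.\ and eventually in $i$, bounded below by a fixed positive constant.

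\textbf{What the spectral statement requires.} I would attack this via $\Delta_{d-1}^+(X_i)=D_{X_i}-A_{X_i}$, as in Section~\ref{Weight-num}, but with one essential correction: the crude bound $\|D_{X_i}-kI\|\le d+1$ used there is too lossy for small $k$ (it would only yield a threshold of order $4d+3$, not $k>d+1$). Instead one should use that, by a union bound over the estimate behind Corollary~\ref{cor:Steiner_deg}, the number of $(d-1)$-faces of degree $<k$ is $o(f_{d-1}(X_i))$ a.s., so $D_{X_i}-kI$ is a bounded operator of \emph{rank} $o(f_{d-1})$. By Cauchy interlacing, all but $o(f_{d-1})$ of the eigenvalues of $\Delta_{d-1}^+(X_i)$ then agree up to $o(1)$ with those of $kI-A_{X_i}$, which by Theorem~\ref{thm:main_Thm2} concentrate on $k-\mathrm{supp}(\mu_{d,k})=\mathrm{supp}(\nu_{d,k})$, a set bounded away from $0$ precisely for $k>d+1$ (one computes $k-\bigl(-d+1+2\sqrt{d(k-1)}\bigr)=(\sqrt{k-1}-\sqrt d)^2>0\iff k>d+1$). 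The remaining crux is therefore to control the $o(f_{d-1})$ exceptional eigenvalues: one must show (i) there are no non-trivial harmonic $(d-1)$-forms, i.e.\ $\widetilde H^{d-1}(X_i;\BR)=0$ with high probability, and (ii) $\lambda_{\max}^{\mathrm{nontriv}}(A_{X_i})$ does not escape the support $J_{d,k}$ of $\mu_{d,k}$, i.e.\ $\lambda_{\max}^{\mathrm{nontriv}}(A_{X_i})\le -d+1+2\sqrt{d(k-1)}+\varepsilon$ eventually a.s.\ for every $\varepsilon>0$.

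\textbf{The main obstacle and two routes.} Thus the conjecture rests on a sharp spectral-gap theorem for random Steiner complexes—a high-dimensional analogue of Friedman's theorem and of the Alon conjecture—whereas Theorem~\ref{spec-gap-prob}, obtained through Garland's method, only gives the bound $2d\sqrt{k-1}+\varepsilon$, off by roughly a factor $\sqrt d$ in the leading term, which is precisely what forces the quadratic threshold. I see two plausible approaches. The first is to sharpen the local-to-global mechanism: the links of $(d-2)$-faces of $X_i$ are essentially random $k$-regular graphs, for which Friedman's theorem supplies second eigenvalue $2\sqrt{k-1}+o(1)$, and one would need to reduce the loss in Garland's inequality to its optimal value, perhaps by feeding in the local convergence of Theorem~\ref{thm:main_Thm3} (convergence to the \emph{whole} arboreal complex $T_{d,k}$, whose spectral edge is $-d+1+2\sqrt{d(k-1)}$ by Theorem~\ref{Rosen}) rather than applying link estimates face by face. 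The second, more robust but more laborious, is a direct trace / non-backtracking-walk argument on the oriented line-graph $\overrightarrow G_d(X_i)$: bound $\BE\bigl[\mathrm{tr}\bigl((A_{X_i}\!\restriction\!\mathrm{nontriv})^{2\ell_i}\bigr)\bigr]$ with $\ell_i\to\infty$ slowly, using Theorem~\ref{thm:main_Thm3} to identify the dominant tree-like contribution (growing like the right edge of $J_{d,k}$ to the power $2\ell_i$) and extensions of the Steiner estimates in Propositions~\ref{Ten-Prop1} and~\ref{Ten-Prop3} to configurations involving many $d$-faces in order to show that walks meeting a ``tangle'' are negligible, then conclude by Markov's inequality and Borel--Cantelli. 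The hard part is exactly this last step: no analogue of the Bordenave-type interlacing/non-backtracking proof is currently available in the simplicial setting, and the extra bookkeeping forced by orientations and by the trivial/non-trivial splitting of the spectrum makes it genuinely delicate. Once such a bound is established, however, Theorem~\ref{thm:main_Thm1} extends to the full range $k>d+1$ with only the cosmetic modifications indicated above, so the conjecture is best viewed as a corollary of a high-dimensional Alon-conjecture-type statement for the random Steiner model.
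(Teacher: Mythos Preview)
The statement is a \emph{conjecture}; the paper does not prove it and explicitly places it in Section~\ref{future} among open problems. There is therefore no ``paper's own proof'' to compare against, and your submission is not a proof either but a (sound) reduction together with a sketch of two possible attacks. Your reduction is correct and matches the paper's own diagnosis: the only place the hypothesis $k>4d^2+d+2$ enters the proof of Theorem~\ref{thm:main_Thm1} is through Theorem~\ref{spec-gap-prob} inside Proposition~\ref{prop:num_of_SST_1}, and everything else (Theorem~\ref{thm:main_Thm2}, Proposition~\ref{prop:num_of_SST_2}, the matrix-tree argument) goes through for $k>d+1$. So the conjecture is indeed equivalent to an eventual-a.s.\ uniform positive lower bound on the non-trivial spectrum of $\Delta_{d-1}^+(X_i)$ in that range.

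That said, your write-up does not close this gap, and you should not present it as a proof. Two specific comments. First, your remark that the degree perturbation $\|D_{X_i}-kI\|\le d+1$ is what forces the quadratic threshold is misdirected: even with perfect degree control one would still need $k-2d\sqrt{k-1}>0$, i.e.\ roughly $k>4d^2$, because the real loss is the factor $d$ (rather than $\sqrt d$) in the Garland bound $2d\sqrt{k-1}$ of Theorem~\ref{spec-gap-prob}. The rank argument you propose is a useful refinement but does not by itself lower the threshold. Second, both of your proposed routes---sharpening Garland to its optimal constant, or a direct trace/non-backtracking argument on $\overrightarrow G_d(X_i)$---amount to proving a high-dimensional analogue of Friedman's theorem for random Steiner complexes. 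This is exactly the open problem; neither route is known to succeed, and the paper does not claim otherwise. Your outline is a reasonable research plan, but as it stands it identifies the obstacle rather than removes it.
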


\subsection{Unweighted asymptotic number of simplicial spanning trees}
     Our result deals with a weighted number of simplicial spanning trees. One would hope to generalize Mckay's results for the number of spanning trees in a graph by proving an unweighted version of Theorem \ref{thm:main_Thm1}. See \cite{LP19} for partial results in this direction in the case of the complete $d$-complex. 
     
\subsection{Finer asymptotic for the number of SSTs}
	In Theorem \ref{thm:main_Thm1}, it is shown that $\sqrt[ \binom{n_i}{d} ]{ \kappa_d(X_i) }  \overset{i\rightarrow \infty}{\longrightarrow} \xi_{d,k}$, namely, $ \kappa_d(X_i) =\xi_{d,k}^{\binom{n_i}{d}(1+o(1))}$ as $i\to\infty$. It would be interesting to obtain better bounds on $\kappa_d(X_i)$. For example can one say something about the next order of $\kappa_d(X_i)$ by studying the sequence $\kappa_d(X_i)\xi_{d,k}^{-\binom{n_i}{d}}$?
	
\subsection{Asymptotic number of SSTs in other random sampling models}
	In this work we studied the asymptotic weighted number of SST's in simplicial complexes sampled from uniform random Steiner complexes. One can hope that similar methods can be used to study other models.

\begin{appendices}

\section{Proof of Theorem \ref{spec-gap-prob}}\label{APP1}

Let us start by stating a result of Friedmann regarding the spectral gap in the matching model. 
\begin{Theorem}[\cite{Fri08}] \label{Friedmann1}
	Fix $k\in \mathbb{N}$ and $\varepsilon>0$. Then there exists a constant $C_{k,\varepsilon}\in (0,\infty)$ such that a random graph $G$ on $n$ vertices sampled according to the matching model satisfies
\begin{equation}\label{eq:Friedmann}
	\mathbb{P}\Big( \vert \lambda_i(G)\vert \leq 2\sqrt{k-1} +\varepsilon \Big)\geq 1- \frac{C_{k,\varepsilon}}{n^{\tau(k)}},\qquad \forall 2\leq i\leq n\,,
\end{equation}
where $\lambda_1(G)\geq ...\geq \lambda_n(G)$ are the eigenvalues of $A(G)$, the adjacency matrix of $G$, and $ \tau(k)= \lceil  \sqrt{k-1} \rceil -1 $. Furthermore, there exists a constant $C_k>0$, such that 
\[ 
	\mathbb{P}\Big(  \lambda_2(G) > 2\sqrt{k-1} \Big)\geq \frac{C_k}{n^{s(k)}}\,, 
\]
   	where $s(k)= \lfloor \sqrt{k-1} \rfloor$.
   \end{Theorem}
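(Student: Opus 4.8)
The plan is to prove Theorem~\ref{Friedmann1} by the non-backtracking trace method, in the spirit of Friedman's original argument and its streamlined reworking via the non-backtracking (Hashimoto) operator.

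\emph{Reduction to the non-backtracking operator.} First I would replace the adjacency matrix $A(G)$ of the $k$-regular graph $G$ by the non-backtracking operator $B$ on the $kn$ oriented edges. The Ihara--Bass identity
\[
  \det(I-uB) = (1-u^2)^{|E|-|V|}\,\det\big(I-uA(G)+(k-1)u^2 I\big)
\]
shows that every non-trivial eigenvalue of $A(G)$ lies in $[-2\sqrt{k-1}-\varepsilon,\,2\sqrt{k-1}+\varepsilon]$ as soon as every eigenvalue of $B$ other than the trivial one $k-1$ has modulus at most $\sqrt{k-1}+\varepsilon'$. Since $B$ is not normal, I would estimate, for $\ell=\lceil c\log n\rceil$, the quantity $\BE\,\mathrm{tr}\big((B^{(\ell)})^{*}B^{(\ell)}\big)$, where $B^{(\ell)}$ is the operator counting length-$\ell$ non-backtracking walks with an explicit rank-$O(1)$ correction carrying off the trivial part, and then conclude via $\rho\le\|B^{(\ell)}\|^{1/\ell}$ together with Markov's inequality.

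\emph{Path counting and the tangle-free decomposition.} Using configuration-model switchings, $\BE\,\mathrm{tr}\big((B^{(\ell)})^{*}B^{(\ell)}\big)$ becomes a sum over closed non-backtracking walks $w$ of length $2\ell$ of $\BP(w\subseteq G)$, with $\BP(w\subseteq G)$ comparable to $n^{-e(w)}$, where $e(w)$ is the number of distinct edges traversed. Grouping walks by the isomorphism type of their trace graph $G_w$ and by its excess $\mu(G_w)=|E(G_w)|-|V(G_w)|+1$, the tangle-free walks ($\mu=0$: trace graph a tree with one doubled geodesic) give, after division by $(\sqrt{k-1})^{2\ell}$, a main term of size $\mathrm{poly}(\ell)$, while each extra unit of excess costs a factor $O\!\big(\mathrm{poly}(\ell)(k-1)^{O(1)}/n\big)$. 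The technical heart is the tangle-free decomposition: with probability $1-O(\mathrm{poly}(\ell)(k-1)^{\ell}/n)$ every radius-$\ell$ ball of $G$ contains at most one cycle, and on that event every length-$\ell$ walk splits into a bounded number of tangle-free stretches, so the whole trace equals the main term times $1+O(\mathrm{poly}(\ell)(k-1)^{\ell}/n)$.

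\emph{Extracting the two exponents.} Markov's inequality then gives $\BP\big(\rho>\sqrt{k-1}(1+\varepsilon)\big)\le(1+\varepsilon)^{-2\ell}\mathrm{poly}(\ell)+\mathrm{poly}(\ell)(k-1)^{\ell}/n$; choosing $\ell$ a suitable multiple of $\log n$ and carefully tracking exactly which positive-excess trace graphs survive the tangle-free decomposition --- the delicate combinatorial count carried out in \cite{Fri08} --- yields the polynomial rate $n^{-\tau(k)}$ with $\tau(k)=\lceil\sqrt{k-1}\rceil-1$, and Ihara--Bass then returns \eqref{eq:Friedmann}. For the \emph{furthermore}, I would build a local obstruction: a ``heavy'' subgraph $H$ whose non-backtracking spectral radius exceeds $\sqrt{k-1}$, the minimal such $H$ embeddable in a $k$-regular graph having excess $s(k)=\lfloor\sqrt{k-1}\rfloor$. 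If the radius-$r$ ball around some vertex of $G$ is such an $H$ with $k$-regular trees attached, the decaying eigenfunction of the corresponding infinite graph, inserted into the Rayleigh quotient of $A(G)$ and projected off the constants, forces $\lambda_2(G)>2\sqrt{k-1}$; a first-and-second-moment computation in the matching model --- each of the $s(k)$ excess edges of $H$ costing $\Theta(1/n)$, with $\Theta(n)$ candidate centers --- shows such an $H$ occurs with probability $\Theta(n^{-s(k)})$, giving the stated lower bound. The crux throughout, and the main obstacle, is the tangle-free path count: bounding the contribution of walks whose trace graph has positive excess by the correct power of $n$, uniformly over the super-polynomially many isomorphism types, and arranging the bookkeeping so that $\tau(k)$ and $s(k)$ come out exactly as asserted rather than as unspecified constants.
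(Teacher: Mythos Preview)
The paper does not prove Theorem~\ref{Friedmann1} at all: it is quoted verbatim as a black box from Friedman's monograph \cite{Fri08}, and the appendix simply invokes \eqref{eq:Friedmann} as input to a union bound over links. There is therefore no ``paper's own proof'' to compare your proposal against.

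As a standalone sketch, what you have written is a reasonable high-level outline of the \emph{Bordenave} approach to Friedman's theorem (non-backtracking operator, Ihara--Bass, tangle-free decomposition, high trace method), not of Friedman's original argument, which proceeds via selective traces and an asymptotic expansion of $\BE[\mathrm{tr}\,A^{\ell}]$ in powers of $1/n$ with explicit identification of the singular ``tangles'' that obstruct each order. The two routes are genuinely different; both yield the qualitative statement $\lambda_2\le 2\sqrt{k-1}+\varepsilon$ with high probability, but the \emph{exact} polynomial exponents $\tau(k)=\lceil\sqrt{k-1}\rceil-1$ and $s(k)=\lfloor\sqrt{k-1}\rfloor$ are tied to Friedman's precise classification of the minimal tangles and do not fall out of the coarse tangle-free bookkeeping you describe --- the Bordenave argument as usually presented gives a weaker, unspecified polynomial rate. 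Your final paragraph acknowledges this (``carefully tracking exactly which positive-excess trace graphs survive \ldots\ the delicate combinatorial count carried out in \cite{Fri08}''), which is honest but also means the sketch defers precisely the step that produces the numbers in the statement. For the purposes of the present paper none of this matters: only the upper bound \eqref{eq:Friedmann} with \emph{some} exponent $\tau(k)>d+1$ is used, and that is simply cited.
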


	Let $\epsilon>0$. As stated before in Theorem  \ref{Friedmann1}, a random graph $G$ on $n$ vertices sampled according to the matching model satisfies \eqref{eq:Friedmann}.	Recall that for every $\sigma \in X_i^{d-2}$, the link of $\sigma$, denoted $\mathrm{lk}(X_i,\sigma)$ is a random graph on $n_i-d+1$ vertices, distributed according to the matching model with parameter $k$ and therefore, the event 
\[
	E_{i,\sigma,\varepsilon} = \{\lambda_2(\mathrm{lk}(X_i,\sigma))>2\sqrt{k-1}+\varepsilon\}
\]
satisfies 
\[
	\BP(E_{i,\sigma,\varepsilon})\leq C_{k,\varepsilon}(n_i-d+1)^{-(\lceil\sqrt{k-1}\rceil-1)}\,.
\]	

A union bound, thus gives
\[
	\BP\bigg(\bigcup_{\sigma\in X^{d-2}_i} E_{i,\sigma,\varepsilon}\bigg) \leq |X^{d-2}_i|\cdot C_{k,\varepsilon}(n_i-d+1)^{-(\lceil\sqrt{k-1}\rceil-1)}\leq C_{k,\varepsilon}n_i^{d-\lceil\sqrt{k-1}\rceil}\,,
\]
where in the last bound we used the fact that $|X^{d-2}_i|=\binom{n_i}{d-1}$.

By Garland's method (c.f. \cite{GW14}), on the event $\bigcup_{\sigma\in X^{d-2}_i} E_{i,\sigma,\varepsilon}$, all non-trivial eigenvalues of the adjacency matrix of $X_i$ are within $(-\infty,2d\sqrt{k-1}+\varepsilon)$.

Hence, whenever $\lceil \sqrt{k-1} \rceil >d+1$, by the Borel-Cantelli lemma, only finitely many of the random complexes $X_i$ do not satisfy 
 	\[ 
 		\text{spec}(A_{X_i}) \subset (-\infty,2d\sqrt{k-1}+\varepsilon)\,.
 	\]
Since $\lceil\sqrt{k-1}\rceil >d+1$ whenever $k>(d+1)^2+1$, the result follows.

\end{appendices}

\bibliography{Biblio}
\bibliographystyle{alpha}

$~$\\
Department of mathematics,\\
Technion - Israel Institute of Technology\\
Haifa, 3200003, Israel.\\
E-mail: ron.ro@technion.ac.il	\\
E-mail: tenen25@campus.technion.ac.il	
\end{document}